\newtheorem{theorem}{Theorem}[section]
\newtheorem{corollary}[theorem]{Corollary}
\newtheorem{definition}[theorem]{Definition}
\newtheorem{lemma}[theorem]{Lemma}
\newtheorem{proposition}[theorem]{Proposition}
\newtheorem{remark}[theorem]{Remark}
\newenvironment{proof}[1][Proof]{\noindent\textbf{#1.} }{\ \rule{0.5em}{0.5em}}
\def\R{\mathbb R}
\begin{document}

\title{Regularity properties of the solution to a stochastic heat equation
driven by a fractional Gaussian noise on ${\mathbb{S}}^2$}
\author{Xiaohong Lan \thanks{%
Research of X. Lan is supported by NSFC grant 11501538 and CAS grant
QYZDB-SSW-SYS009, E-mail: xhlan@ustc.edu.cn } \\
%EndAName
School of Mathematical Sciences \\
University of Science and Technology of China \\
E-mail: xhlan@ustc.edu.cn\\
\and Yimin Xiao \thanks{%
Corresponding author. Research of Y. Xiao is partially supported by grants
 DMS-1612885 and DMS-1607089 from the National Science Foundation.} \\
%EndAName
Department of Statistics and Probability\\
Michigan State University\\
E-mail: xiao@stt.msu.edu}
\maketitle

\begin{abstract}
We study the stochastic heat equation driven
by an additive infinite dimensional fractional Brownian noise on the unit sphere $\mathbb{S}^{2}$. 
The existence and uniqueness of its solution in certain Sobolev space is investigated and 
sample path regularity properties are established. In particular, the exact uniform modulus of
continuity of the solution in time/spatial variable is derived. 
\end{abstract}

\textsc{Key words}: Stochastic heat equation; fractional-colored Gaussian
noise; spherical random fields; uniform modulus of continuity.

\textsc{2010 Mathematics Subject Classification}: Primary 60G15; secondary
60G17; 60H15; 42A16; 60G60.

%\linenumbers

\section{Introduction}

For more than three decades, the topic of stochastic partial differential
equations (SPDEs, henceforth) has been an active area of research in pure
and applied mathematics. The study of SPDEs has recently entered a period of
rapid growth. We refer to \cite{DaPraZab, GM11, PrevRoc, Walsh} for
systematic accounts on SPDEs, and to \cite{DKMNX, Kh16, Kh14} for some
recent developments.

In this paper, we consider the following stochastic heat equation driven by
an infinite dimensional fractional Brownian noise $W^{H} = \left\{ W^{H}\left(t\right) ,\, t \in \R_+ \right\} $ 
on the unit sphere $\mathbb{S}^{2}\subset \mathbb{R}^{3}$: 
\begin{equation}
du(t)=\Delta _{{\mathbb{S}}^{2}}u(t)dt+dW^{H}(t)  
\label{def:heat eq}
\end{equation}%
\bigskip with initial condition $u(0)=u_{0}\in L^{2}(\Omega \times {\mathbb{S}}^{2},\mathbb{P}\times \nu)$, where 
$\nu$ is the Lebesgue measure on $\mathbb{S}^{2}$.  

In (\ref{def:heat eq}),  $\Delta _{{\mathbb{S}}^{2}}$  is the 
Laplace-Beltrami operator on $\mathbb{S}^{2}$ defined as 
\begin{equation*}
\Delta _{{\mathbb{S}}^{2}}=\frac{\partial ^{2}}{\partial \vartheta ^{2}}
+\cot \vartheta \frac{\partial ^{2}}{\partial \vartheta ^{2}}+(\sin \vartheta )^{-2}\frac{\partial ^{2}}{\partial \phi ^{2}},
\end{equation*}%
where $\vartheta \in \lbrack 0,\pi ]$ represents the latitude and $\phi \in \lbrack 0,2\pi )$ the longitude 
in spherical coordinates.  The Gaussian noise $W^{H}$ is a specialization of the infinite 
dimensional fractional Brownian noise in Tindel et al (\cite{TinTudVien03}) to the sphere $\mathbb S^2$. More
precisely, it is defined as follows. 

\begin{definition}
\label{def:W^H} 
The noise $W^{H}=\{W^{H}(t),\,t\in \mathbb{R}_{+}\}$ is a Gaussian process with the following representation
\begin{equation}
W^{H}(t,x)=\sum_{\ell \geq 0}\sum_{m=-\ell }^{\ell }\sqrt{C_{\ell }}
\beta_{\ell m}(t)Y_{\ell m}(x),
\label{specW}
\end{equation}%
where $\{C_{\ell }, \ell =0,1,2,...\}$ is a sequence of  positive 
constants,  $Y_{\ell m},\, (\ell =0,1,2, ...,$ $ m=0, \pm 1,\ldots ,\pm \ell)$ are the
spherical harmonic functions on ${\mathbb{S}}^{2}$ satisfying 
\begin{equation*}
\Delta _{{\mathbb{S}}^{2}}Y_{\ell m}=-\ell (\ell +1)Y_{\ell m},
\end{equation*}%
and the sequence of complex-valued Gaussian processes $\left\{ \beta _{\ell m}(t)\right\}
_{\ell m}$ satisfies the following two conditions
\begin{itemize}
\item[(a)]
for every $t\in\mathbb{R}$ and $\ell =0,1,2,..., m=0,\pm 1,\ldots ,\pm \ell $, 
\begin{equation}
\overline{\beta _{\ell m}}(t)=(-1)^{m}\beta _{\ell ,-m}(t).
\label{conj-beta}
\end{equation}%
\item[(b)]
$\{\sqrt{2}Re\beta _{\ell m}(t),\sqrt{2}Im\beta _{\ell m}(t),\ \ell
=0,1,2,...,\ m=0,...,\ell \}$ is a sequence of independent copies of a real-valued
fractional Brownian motion $B^H = \{B^H(t), t \in \R_+\}$ with Hurst index $H\in (0,1)$. 
\end{itemize}
\end{definition}

\begin{remark}
It is readily seen that the sequence of Gaussian processes $\left\{ \beta_{\ell m}(t)\right\} _{\ell m}$ satisfies   
\begin{equation}
\mathbb{E}\big[\beta _{\ell m}(t)\overline{\beta _{\ell ^{\prime }m^{\prime}}}(s)\big]
=\delta _{\ell }^{\ell ^{\prime }}\delta _{m}^{m^{\prime}}R_{H}(t,s),  
\label{indep-beta}
\end{equation}%
for all $\ell =0,1,2,...,\ m=-\ell ,...,\ell $, where $\delta _{\ell }^{\ell^{\prime }} = 1$ if $\ell = \ell'$ 
and 0 otherwise, and where 
\begin{equation*}
R_{H}(t,s)=\frac{1}{2}\big[t^{2H}+s^{2H}-|t-s|^{2H}\big].
\end{equation*}%
\end{remark}

For the positive coefficients $\{C_{\ell }, \ell = 0, 1, 2, \ldots\}$ in \eqref{specW}, we assume   
the following condition:

\textbf{Condition (A.1)} \emph{There  exist constants} $\alpha >0$\emph{\ and }  $c_{0}>1$
\emph{ such that } %$C_{\ell },\ (\ell =0,1,2,...)$\emph{\ satisfy }%
\begin{equation*}
C_{\ell }=\Upsilon \left( \ell \right) (\ell +1/2)^{-\alpha }\quad \hbox{ \it and } \  \ c_{0}^{-1}\leq \Upsilon \left( \ell \right) \leq c_{0}
\end{equation*}%
for $\ell =0,1,2,...$
%and %\emph{with some constant }%
%\begin{equation*}
%c_{0}^{-1}\leq \Upsilon \left( \ell \right) \leq c_{0},\ \ell =0,1,2,...
%\end{equation*}%
%\emph{for some }\emph{.}

Note that for $\alpha>2$, $W^{H} = \{ W^{H}(t), \, t \in \mathbb{R}_+ \}$ is  an $L^{2}(\mathbb{S}^2)$-valued 
$\Lambda$-fractional Brownian motion with $\Lambda$ given below in (\ref{def:Lamda}). When $0<\alpha \leq 2$, $ W^{H}(t)$ 
can be viewed as a generalized fractional Brownian motion taking values on some Hilbert space $U 
\supset L^2(\mathbb{S}^2)$. For instance,  $U$ is a Hilbert space such that for any $\varphi ,\psi \in U$, 
\begin{equation} \label{def-Hilbert U}
\left\langle \varphi ,\psi \right\rangle _{U}
=\int_{\mathbb{S}^{2}}\int_{\mathbb{S}^{2}}\varphi (x)\widetilde{\Lambda} (x,y)\psi (y)d\nu (x)d\nu (y),
\end{equation}%
where $\widetilde{\Lambda} (x,y)=\sum_{\ell =1}^{\infty }\ell ^{-2}P_{\ell }\left( \left\langle x,y\right\rangle \right) $ for 
all $x,y\in \mathbb{S}^{2}$.  Here $P_{\ell }: [-1,1]\rightarrow \mathbb{R},\, (\ell =0,1,2,...)$ are the Legendre 
polynomials satisfying the normalization condition $P_{\ell }(1)=1$ for all $\ell $. So even if 
$\sum_{\ell =0}^{\infty }C_{\ell }=\infty$, $W^H(t)$ given by (\ref{specW}) is a well-defined $U$-valued 
Gaussian process, see \cite{TinTudVien03} for more discussion about the Gaussian process  $W^H(t)$ 
taking values on more general Hilbert spaces.  In analogy to the Euclidean space setting considered in 
\cite{BalTud08, XiaoTudor08}, one may refer to $W^{H}$ as a fractional-colored Gaussian noise on the sphere 
$\mathbb S^2$. 

%\textcolor{blue}{XXX How is their $Q$ related to $\Lambda$ here?}%
%\textcolor{red}{The one in Lang and Schwab \cite{LangSchAAP} is corresponding to $\alpha>2$.}

The present paper is mainly motivated by the recent works of Lang and Schwab \cite{LangSchAAP} who 
studied (\ref{def:heat eq}) driven by a $Q$-Wiener process which corresponds to the case of $H=\frac{1}{2}$ 
and $\alpha >2$ in the setting of the present paper, and by Tindel et al. \cite{TinTudVien03, TinTudVien04, NuaV09} 
who studied (\ref{def:heat eq}) with $x\in \mathbb{S}^{1}$, which is the unit circle in 
$\mathbb{R}^{2}$, and the fractional Gaussian noise $W^{H}$ on $\mathbb{S}^{1}$ for an arbitrary $H\in (0,1)$. 
Our objectives are to establish the existence and uniqueness of the mild solution of (\ref{def:heat eq}), 
%\footnote{{\textcolor{blue}{Can you check Theorem 1 
%in \cite{TinTudVien03} and its proof of Corollary 1 to see if their result/method can be applied to our setting?.}%
%}} 
and to study the regularity properties of the solution process when it exists. For simplicity, we focus in this paper 
on the case of $\frac{1}{2}<H<1.$ The case of $0<H<\frac{1}{2}$ is more delicate and will be considered in a subsequent 
paper.

In the stochastic heat equation (\ref{def:heat eq}), we make some assumptions on the initial value $u_{0}
=\{u_{0}(x),x\in {\mathbb{S}}^{2}\}$. First recall  from \cite{MPbook} that, 
a spherical random field $Z=\{Z(x),x\in\mathbb{S}^{2}\}$ is called 2-weakly isotropic if 
\begin{equation*}
\mathbb{E}[Z(x)Z(y)]=\mathbb{E}[Z(gx)Z(gy)]
\end{equation*}
for all $x,y\in \mathbb{S}^{2}$ and $g\in SO(3)$.  If $u_{0}$ is a zero-mean, 2-weakly isotropic random field 
with finite variance, then by Theorem 5.13 in \cite{MPbook}, we have the following spectral representation: 
\begin{equation}
u_{0}(x)=\sum_{\ell \geq 0}\sum_{m=-\ell }^{\ell }u_{0,\ell m}Y_{\ell m}(x), \ \ \ \hbox{ a.s.,  }
\label{spec-u0}
\end{equation}%
where the random variables $u_{0,\ell m},\, \ell =0,1,2,...,m=0,\pm 1,\ldots,\pm \ell $, satisfy 
\begin{equation*}
\mathbb{E}\big(u_{0,\ell m}\overline{u_{0,\ell ^{\prime }m^{\prime }}}\big)
=\delta _{\ell }^{\ell ^{\prime }}\delta _{m}^{m^{\prime }}D_{\ell }
\end{equation*}%
for some nonnegative constants $D_{\ell },\ \ell =0,1,2,...$. The sequence $\{D_\ell, \ell \ge 0\}$ is called the
angular power spectrum of $u_{0}$. We will make use of  the following assumption.

\textbf{Condition (A.2)} Either $u_{0}\equiv 0$\emph{\ or }
$u_{0}=\left\{u_{0}(x),\ x\in {\mathbb{S}}^{2}\right\} $\emph{\ is a zero-mean 
isotropic Gaussian field which is independent from the Gaussian
noise } $W^{H}$\emph{. Moreover, there
exist finite constants $\beta >4$\, and $D_{0}>0$  such that }%
\begin{equation*}
D_{\ell }\leq D_{0}(\ell +1/2)^{-\beta }
\end{equation*}%
\emph{for all }$\ell =0,1,2,...$

In order to state our main theorem, we introduce the following notations. Let $\mathbb{I}$ be an open interval 
on $\mathbb{R}$.  For a function $u: \mathbb{I} \to \mathbb{R}$ and an integer $k>0$, we say that the $k^{th}$  
weak derivative of $u$ exists if there exists a locally integrable function $v$ such that for all infinitely differentiable
function $\varphi $ with compact support on $\mathbb{I}$, %
\begin{equation*}
\int_{\mathbb{I}}u\,D_{t}^{k}\varphi dt=(-1)^{k}\int_{\mathbb{I}}v\,\varphi dt.
\end{equation*}%
Such kind of function $v$ is uniquely determined up to a zero-measure set on $\mathbb{I}$, and we write 
$v=:D_{t}^{k}u$ as usual. Let $\mathbb{H}^{k}(\mathbb{I})$ be the subspace of $L^{2}(\mathbb{I})$ such that 
\begin{equation*}
\mathbb{H}^{k}(\mathbb{I})=\left\{ u\in L^{2}(\mathbb{I}): \forall m=0,1,..., k,\, D_{t}^{m}u\ \hbox{ exists and belongs to } 
L^{2}(\mathbb{I})\right\} .
\end{equation*}%
The space $\mathbb{H}^{k}(\mathbb{I})$ is also called the Sobolev space with $k^{th}$
weak derivatives having finite $L^{2}$-norm, see for instance \cite{GilTrud,PrevRoc} for 
more details about this and more general Sobolev spaces. 

Throughout this paper,  $T$ is a positive and finite constant and $\mathbb{T}=  \lbrack 0,T]$.
The following theorem is the main result of this paper.

\begin{theorem}
\label{Th:Main} 
Assume $H > 1/2$ and Conditions \textbf{(A.1) } and \textbf{(A.2) } hold. Then eq.(\ref{def:heat eq}) 
has a unique solution $\{u(t,x),t\in \mathbb{T},x\in {\mathbb{S}}^{2}\}$  in $L^{2}(\Omega \times \mathbb 
T\times \mathbb{S}^{2})$ which is a mean-zero Gaussian random field that is 2-weakly isotropic in $x$ (for each fixed 
$t \in \mathbb T$). Moreover, the solution has the following regularity properties:

\begin{enumerate}
\item[(a)] If $\alpha +4H>4$, then for every $t\in \mathbb T,$ we have $u(t,\cdot )\in C^{1}(\mathbb{S}^{2})$
a.s. Moreover, if $\alpha +4H>6$, then for every $t\in \mathbb T$,
$u(t,\cdot )\in C^{2}(\mathbb{S}^{2})$ a.s. and,  for every $x\in \mathbb{S}^{2}$, 
$u(\cdot ,x)\in \mathbb{H}^{1}(\mathbb T)$ a.s.

\item[(b)]  If $u_{0}\equiv 0$ or $\beta >4H+2$, then for every $x\in \mathbb{S}^{2}$, 
\begin{equation}
\lim_{\varepsilon \rightarrow 0}\sup_{\substack{ 0\leq s<t\leq T, \\ t-s\leq
\varepsilon }}\frac{|u(t,x)-u(s,x)|}{(t-s)^{\eta }\sqrt{\big|\log (t-s)\big|}}
=K_{1,1},\ a.s., 
 \label{eq:Modulus-t}
\end{equation}%
where $\eta =H-\max {\{(2-\alpha )/4,0\}}$, and where $K_{1,1}>0$ is a constant depending on 
$c_{0},\ \alpha $ and $H$. 

\item[(c)] If $\alpha +4H<4$ and $u_{0}\equiv 0$, then for every $t\in \mathbb T$, there 
exists a constant $K_{1,2}>0$ depending on $c_{0},\alpha,\, t $ and $H$ such that%
\begin{equation}
\lim_{\varepsilon \rightarrow 0}\sup_{\substack{ x,y\in {\mathbb{S}}^{2} \\ %
d_{{\mathbb{S}}^{2}}(x,y)\leq \varepsilon }}
\frac{|u(t, x)-u(t, y)|}{\left( d_{{\mathbb{S}}^{2}}(x,y)\right) ^{\gamma }
\sqrt{\big|\log d_{{\mathbb{S}}^{2}}(x,y)\big|}}=K_{1,2},\ a.s.,  
\label{eq:Modulus-x}
\end{equation}
where $\gamma =\alpha /2-1+2H\in (0,1)$ and $d_{{\mathbb{S}}^{2}}(x,y)$ denotes the geodesic distance 
between $x, y$.
\end{enumerate}
\end{theorem}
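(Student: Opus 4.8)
The plan is to diagonalize the equation in the eigenbasis $\{Y_{\ell m}\}$ of $\Delta_{\mathbb{S}^{2}}$, reducing the SPDE to a countable family of scalar fractional Langevin equations, and then to read off every property of the solution from the decay of the coefficient variances and from the addition theorem $\sum_{m=-\ell}^{\ell}Y_{\ell m}(x)\overline{Y_{\ell m}(y)}=\frac{2\ell+1}{4\pi}P_{\ell}(\langle x,y\rangle)$. Writing $u(t,x)=\sum_{\ell,m}a_{\ell m}(t)Y_{\ell m}(x)$ and projecting (\ref{def:heat eq}) onto $Y_{\ell m}$, with $\lambda_{\ell}:=\ell(\ell+1)$, each coefficient solves $da_{\ell m}(t)=-\lambda_{\ell}a_{\ell m}(t)\,dt+\sqrt{C_{\ell}}\,d\beta_{\ell m}(t)$, so that
\begin{equation*}
a_{\ell m}(t)=e^{-\lambda_{\ell}t}u_{0,\ell m}+\sqrt{C_{\ell}}\int_{0}^{t}e^{-\lambda_{\ell}(t-r)}\,d\beta_{\ell m}(r),
\end{equation*}
the integral being a Wiener integral against the fractional Brownian motion underlying $\beta_{\ell m}$. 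For $H>1/2$ its second moment is $H(2H-1)C_{\ell}\int_{0}^{t}\int_{0}^{t}e^{-\lambda_{\ell}(t-r)}e^{-\lambda_{\ell}(t-\rho)}|r-\rho|^{2H-2}\,dr\,d\rho$; rescaling the inner integral by $\lambda_{\ell}$ and using Condition (A.1) gives the key asymptotics $\mathbb{E}|a_{\ell m}(t)|^{2}\asymp C_{\ell}\lambda_{\ell}^{-2H}\asymp\ell^{-(\alpha+4H)}$ as $\ell\to\infty$.

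For existence and uniqueness in $L^{2}(\Omega\times\mathbb{T}\times\mathbb{S}^{2})$, Parseval's identity together with the addition theorem reduces the squared norm to $\sum_{\ell}(2\ell+1)\,\mathbb{E}|a_{\ell m}(t)|^{2}\asymp\sum_{\ell}\ell^{1-\alpha-4H}$, which converges because $H>1/2$ forces $\alpha+4H>2$; combined with the contribution of $u_{0}$ controlled by (A.2), this gives a well-defined element of $L^{2}$, and uniqueness is immediate since the coefficients are uniquely prescribed. The field is centered Gaussian by linearity, and $\mathbb{E}[u(t,x)\overline{u(t,y)}]=\sum_{\ell}\mathbb{E}|a_{\ell m}(t)|^{2}\frac{2\ell+1}{4\pi}P_{\ell}(\langle x,y\rangle)$ depends on $x,y$ only through $\langle x,y\rangle$, yielding $2$-weak isotropy in $x$ for each fixed $t$. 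For part (a) I would differentiate the series in the space variable: using the addition theorem for $\nabla_{\mathbb{S}^{2}}Y_{\ell m}$ and for the Hessian, the pointwise variances of the gradient and the second-order derivative fields are comparable to $\sum_{\ell}\ell\,\lambda_{\ell}\,\mathbb{E}|a_{\ell m}|^{2}\asymp\sum_{\ell}\ell^{3-\alpha-4H}$ and $\sum_{\ell}\ell^{5-\alpha-4H}$, finite exactly when $\alpha+4H>4$, respectively $\alpha+4H>6$; a Gaussian Kolmogorov-type continuity argument on these derivative fields then gives $u(t,\cdot)\in C^{1}(\mathbb{S}^{2})$, respectively $C^{2}(\mathbb{S}^{2})$, a.s. The temporal statement $u(\cdot,x)\in\mathbb{H}^{1}(\mathbb{T})$ under $\alpha+4H>6$ I would obtain from the integral form $u(t,x)=u_{0}(x)+\int_{0}^{t}\Delta_{\mathbb{S}^{2}}u(r,x)\,dr+W^{H}(t,x)$, analyzing the time-regularity of each term through the convergence in $L^{2}(\Omega\times\mathbb{T})$ of the differentiated spectral series.

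Parts (b) and (c) are the core, and the plan has three steps. First, establish the sharp two-sided asymptotics of the increment variances,
\begin{equation*}
\mathbb{E}|u(t,x)-u(s,x)|^{2}\sim c_{1}(t-s)^{2\eta},\qquad \mathbb{E}|u(t,x)-u(t,y)|^{2}\sim c_{2}\big(d_{\mathbb{S}^{2}}(x,y)\big)^{2\gamma},
\end{equation*}
with $\eta,\gamma$ as stated. For the spatial one this follows by inserting $\sum_{m}|Y_{\ell m}(x)-Y_{\ell m}(y)|^{2}=\frac{2\ell+1}{4\pi}\cdot2\big[1-P_{\ell}(\langle x,y\rangle)\big]$ and splitting the frequency sum at $\ell\sim d_{\mathbb{S}^{2}}(x,y)^{-1}$, the hypothesis $\alpha+4H<4$ guaranteeing $\gamma\in(0,1)$. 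For the temporal one each mode contributes $C_{\ell}\min\big((t-s)^{2H},\lambda_{\ell}^{-2H}\big)$, and splitting the sum at $\ell\sim(t-s)^{-1/2}$ produces the regime change at $\alpha=2$: for $\alpha>2$ the low frequencies dominate and give exponent $H$, while for $\alpha<2$ the high frequencies contribute and roughen the exponent to $H-(2-\alpha)/4$. Second, I would prove a strong local nondeterminism property for the Gaussian field, separately in the time and the space variables. Third, feed the sharp variance asymptotics and strong local nondeterminism into the general theory of exact uniform moduli of continuity for Gaussian fields, which converts the local power behavior plus nondeterminism into the stated limits with explicit constants $K_{1,1},K_{1,2}$.

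The hard part will be the last two of these steps. Obtaining the precise leading constants $c_{1},c_{2}$ (not merely the orders) is delicate, above all in time, where the interplay of the Ornstein--Uhlenbeck decay $e^{-\lambda_{\ell}(t-r)}$ with the long-range kernel $|r-\rho|^{2H-2}$ must be summed uniformly over $\ell$ and across the crossover scale. More fundamentally, the matching lower bound in (\ref{eq:Modulus-t})--(\ref{eq:Modulus-x}), which is what upgrades a one-sided $\le K$ estimate to the exact constant, rests entirely on verifying strong local nondeterminism for this specific spherical field; this is the genuinely new estimate on which the whole exact-modulus conclusion depends, and I expect it to be the main technical obstacle.
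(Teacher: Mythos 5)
Your overall strategy coincides with the paper's: spectral diagonalization giving the mild form, the coefficient variance asymptotics $\mathbb{E}|a_{\ell m}(t)|^{2}\asymp \ell^{-(\alpha+4H)}$, variogram bounds obtained by splitting the frequency sum at the crossover scale, and then strong local nondeterminism fed into Gaussian modulus-of-continuity machinery. The existence claim and part (a) are handled essentially the same way (the paper routes existence through Theorem 1 of Tindel, Tudor and Viens by verifying a trace-class condition for $\Phi^{*}G_{H}(-\Delta_{\mathbb{S}^{2}})\Phi$, but the computational content is your Parseval calculation), and your identification of the exponents $\eta$ and $\gamma$ and of the regime change at $\alpha=2$ is correct.

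There is, however, a genuine gap at exactly the point you flag: the strong local nondeterminism in $t$ (Proposition \ref{Prop:SLND-t}) and in $x$ (Proposition \ref{Prop:SLND-x}) is announced as a step but not proved, and it carries the entire weight of the lower bounds in \eqref{eq:Modulus-t} and \eqref{eq:Modulus-x}; without it you only get one-sided estimates. The paper's argument in time is nontrivial: it builds a Schwartz bump $\delta_{t,r}$ whose Fourier transform is supported in $(t-r,t+r)$, pairs it against the spectral quantities $\widehat{g_{\ell}}(t,\tau)$, and uses Cauchy--Schwarz to bound the conditional variance below by a constant times $r^{2\eta}$; in space it reduces to the two-sided angular power spectrum bound \eqref{APS-u} and invokes the results of Lan, Marinucci and Xiao. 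A second, smaller misdirection: you propose to prove asymptotic equivalences with precise leading constants $c_{1},c_{2}$ for the increment variances and then extract explicit $K_{1,1},K_{1,2}$ from a general theory. That is both harder than necessary and not how the conclusion is reached. The paper needs only order-sharp two-sided bounds (Proposition \ref{Prop:Variogram} and the SLND propositions); the fact that the normalized supremum has an almost sure constant limit comes from a Gaussian zero-one law (Lemma \ref{Lem-s4-1}, after Marcus and Rosen), with finiteness of that constant supplied by a metric-entropy upper bound and positivity by SLND combined with Anderson's inequality and a Borel--Cantelli argument; the constants $K_{1,1},K_{1,2}$ are never computed explicitly, only located in an interval $[K_{5,3},K_{5,2}]$.
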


In the following, we give some remarks that compare our results with some existing ones in the literature and 
raise some unsolved problems.

\begin{itemize}
\item Tindel et al. \cite{TinTudVien03} proved the almost-sure H$\ddot{o}$lder continuity of $u$ with respect to ($w.r.t.$) 
the time variable $t$ for $u(t)$ taking values on some general Hilbert space $V$.   Eq. (\ref{eq:Modulus-t}) significantly 
improves their result because it implies the exact uniform modulus of continuity of $u(t)$ taking values on $L^{2}(\mathbb{S}^{2})$.  
Moreover, since $\eta$ increases with $\alpha>0$,   (\ref{eq:Modulus-t}) indicates that the H$\ddot{o}$lder continuity of the solution in 
the time variable $t$ can be improved by the smoothness of spatial 
structure in the fractional-colored noise $W^{H}$. 

\item Parts (a) and (c)  show that the smoothness of  the solution in  the spatial variable $x$ deteriorates when $\alpha$ decreases. 
In particular, when $2<\alpha+4H<4$, $u$ is not differentiable $w.r.t.$ spatial variable $x$. Eq. (\ref{eq:Modulus-x}) provides the 
exact uniform  modulus of continuity of $u$ in $x$. On the other hand, when $\alpha+4H>4$,  we are able to establish more precise 
information on the smoothness of $u$ in $x$  by providing the exact uniform moduli of continuity of higher-order derivatives of $u$ in 
$x\in \mathbb{S}^{2}$. See Corollary \ref{Cor:derivative-x} below.          

\item Note that we have left out  the cases of $4<\beta \leq 4H+2$ and $\alpha+4H=4$. In the first case, the regularity properties 
of $u_0$ may compete with those of the solution of eq.(\ref{def:heat eq}) with  $u_{0}\equiv 0$. In the second case,   
we have not been able to prove the strong local nondeterminism for the solution $u$. These cases are more subtle and some new method
may be needed.
%We believe that the equality (\ref{eq:Modulus-t}) 
%actually holds with index $\eta=(\beta-2)/4$ in the case $4<\beta \leq 4H+2$, and (\ref{eq:Modulus-x}) 
%holds with index $\gamma=1$ in the case $\alpha+4H=4$ and $u_{0}\equiv 0$, but this may need a new method for proving it.
\end{itemize}

The plan of this paper is as follows. In Section \ref{Sec:fractional-colored noise}, we recall some basic properties of
stochastic integration with respect to fractional Brownian motion and investigate the smoothness of the Gaussian noise $W^{H}$. 
We present the unique existence of the mild solution $u(t)$ in $L^{2}(\Omega\times\mathbb{S}^{2})$ in Section \ref{Sec:Existence}. 
In particular, we give the uniform convergence of this mild solution when $\alpha+4H>4$, which leads to that $u$ also exists in 
some nice Sobolev spaces. In Section \ref{Sec: TechnicalTools}, some auxiliary technical tools such as estimation of variogram and 
strong local nondeterminism of the solution $\{u(t,x),t\in \mathbb{T},x\in {\mathbb{S}}^{2}\}$ are provided. These are not only instrumental 
for our proofs in this paper but also useful for other purposes (cf. \cite{Xiao(SLND)}).
Finally, we prove the exact moduli of continuity (\ref{eq:Modulus-t}) and (\ref{eq:Modulus-x}) of the solution $u$ in 
Section \ref{Sec:ModulusContinuity}. 

Throughout this paper, we denote by $``A\approx B"$ the commensurate case that there
exist positive and finite constants $c_{1}<c_{2}$ such that $c_{1}B\leq A\leq c_{2}B$.

%The study of stochastic partial differential equations
%(SPDEs) has become an active area of research following the seminal articles
%of [12] and [31]. The most studied equations are the heat and wave equations,
%and parabolic and hyperbolic generalizations of these, driven by space?time white
%noise.

%For linear stochastic partial differential equations including stochastic wave equations
%driven by Gaussian noise on the sphere, see  \cite{DalangL04}.

\section{The Fractional-colored Noise \label{Sec:fractional-colored noise}}

In this section, we provide some preliminaries on the fractional Gaussian noise $W^{H}\left( t\right).$ 
Moreover, when $\alpha>2$, we establish some regularity properties of the random field $\{W^{H}( t,x),$ 
$ t \in \mathbb{R}_+, x \in 
{\mathbb{S}}^2\}$. 

We first recall briefly some well-known results about  stochastic integration with respect to fractional 
Brownian motion $B^{H}=\{B^{H}(t),\ t\in \mathbb{R}_+\}$  with $H>\frac{1}{2}.$ For any $T>0$, let 
$\mathcal{H}(\mathbb{T})$ be the completion of the space linearly spanned by the indicator functions 
$\left\{ \mathrm{1\hskip-2.9trueptl}_{[s,t]},0\leq s<t\leq T\right\} $ with respect to the inner product
\begin{equation*}
\left \langle \mathrm{1\hskip-2.9trueptl}_{[s_{1},t_{1}]},\mathrm{1\hskip-2.9trueptl}_{[s_{2},t_{2}]}
\right\rangle_{\mathcal{H}}
=\int_{s_{1}}^{t_{1}}\int_{s_{2}}^{t_{2}}|s-t|^{2H-2}dsdt,
\end{equation*}%
where $\ 0\leq s_{i}<t_{i}\leq T,\,i=1,2.$ Then, for any $\varphi  \in \mathcal{H}(\mathbb T)$,
the stochastic integral
\begin{equation*}
B^{H}(\varphi )=\int_{0}^{T}\varphi \left( t\right) dB^{H}(t)
\end{equation*}%
is well-defined. %for any $\varphi \in \mathcal{H}(\mathbb{T})$. 
Moreover, for any $\varphi ,\psi \in \mathcal{H}(\mathbb T) $,
\begin{equation}
\mathbb{E}[B^{H}(\varphi )B^{H}(\psi )]
=\left\langle \varphi ,\psi \right\rangle _{\mathcal{H}}
=\int_{0}^{T}\int_{0}^{T}\varphi (t)\psi(s)|t-s|^{2H-2}dtds.
\label{fBm-inprod}
\end{equation}%
See for instance  \cite{AMN} or \cite{TinTudVien03} for more details about the stochastic integration 
with respect to fBm $B^{H}$.

In the meantime, it is readily seen by \cite{MPbook}  that when $\alpha
>2,$ the definition of $W^{H}$ in Section 1 is equivalent to that the noise 
$W^{H}=\{W^{H}(t),\,t\in \mathbb{R}_{+}\}$ is an $L^{2}(\mathbb{S}^{2})$%
-valued Gaussian field such that 
$\mathbb{E}\big(W^{H}(t,x)\big)=0$ for all $(t,x)\in \mathbb{R}_{+}\times 
\mathbb{S}^{2}$ and its covariance function is given by 
\begin{equation}
\mathbb{E}\big[W^{H}(t,x)W^{H}(s,y)\big]=R_{H}(t,s)\Lambda (x,y),
\label{Eq:cov}
\end{equation}%
with the spatial covariance function $\Lambda :\mathbb{S}^{2}\times \mathbb{S}^{2}\rightarrow 
\mathbb{R}_{+}$ which can be decomposed into 
\begin{equation}\label{def:Lamda}
\Lambda (x,y)=\sum_{\ell =0}^{\infty }\frac{2\ell +1}{4\pi }C_{\ell }P_{\ell }(\left\langle x,y\right\rangle ). 
\end{equation}%
Notice that (\ref{Eq:cov}) and (\ref{def:Lamda}) imply that 
\begin{equation*}
{\mathbb{E}}\big[W^{H}(t,x)W^{H}(s,y)\big]={\mathbb{E}}\big[W^{H}(t,gx)W^{H}(s,gy)\big]
\end{equation*}%
for every pair of $(t,s)\in \mathbb{R}_{+}^{2},$ 
all $g\in SO(3)$ and all $x,y\in \mathbb{S}^{2}.$ Therefore, for every $t\in \mathbb{R}_{+}$, $W^{H}(t)$ is 
a 2-weakly isotropic random field on ${\mathbb{S}}^{2}$. % for more information.

The following proposition provides some properties of the fractional-colored noise $W^{H}$ which will be
exploited later. 

\begin{proposition}
\label{Prop: W^H} 
Under Conditions \textbf{(A.1) }and\textbf{\ (A.2)} with $\alpha >2$, the Gaussian random 
field $W^{H}$ defined in (\ref{specW}) is zero-mean, 2-weakly isotropic and converges in $L^{2}(\Omega )$
uniformly  for $(t,x)\in \mathbb{T}\times {\mathbb{S}}^{2}$. Moreover, the following statements hold:
\begin{enumerate}
\item[(i)] If $1/2<H<1$, then for every $x\in {\mathbb{S}}^{2}$, $W^{H}(\cdot ,x)\in \mathbb{H}^{1}(\mathbb{T})$ a.s.
\item[(ii)] If $\alpha >4,$ then for every $t\in \mathbb{T}$, $W^{H}(t,\cdot )\in C^{1}({\mathbb{S}}^{2})$\ a.s. 
\end{enumerate}
\end{proposition}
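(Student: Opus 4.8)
The plan is to verify each assertion by computing second moments via the spectral representation (\ref{specW}) together with the covariance structure (\ref{indep-beta}) and the addition theorem for spherical harmonics. The key structural facts I would use are: the isotropy follows immediately from (\ref{Eq:cov})--(\ref{def:Lamda}), as already noted in the text; and the uniform $L^2(\Omega)$ convergence reduces, by orthonormality of the $Y_{\ell m}$ and $\sum_{m=-\ell}^{\ell}|Y_{\ell m}(x)|^2 = \frac{2\ell+1}{4\pi}$, to the convergence of $\sum_\ell \frac{2\ell+1}{4\pi} C_\ell R_H(t,t)$, which under Condition \textbf{(A.1)} is commensurate with $\sum_\ell (\ell+1/2)^{1-\alpha} t^{2H}$. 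For $\alpha > 2$ this is finite and bounded uniformly on $\mathbb{T}\times\mathbb{S}^2$, giving the stated uniform convergence in $L^2(\Omega)$.

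For part (i), I would fix $x$ and study $W^H(\cdot,x)$ as a one-dimensional process in $t$. The candidate for the weak derivative is the formal series $\sum_{\ell,m}\sqrt{C_\ell}\,\dot\beta_{\ell m}(t) Y_{\ell m}(x)$, but since fBm is nowhere differentiable I would instead work at the level of the Sobolev norm directly. The cleanest route is to show $\E\|W^H(\cdot,x)\|^2_{\mathbb{H}^1(\mathbb{T})} < \infty$, which by definition of $\mathbb{H}^1$ and Fubini forces the weak derivative in $t$ to exist and lie in $L^2(\mathbb{T})$ almost surely. Concretely, each real-valued fBm $B^H$ with $H>1/2$ has sample paths in $\mathbb{H}^1(\mathbb{T})$ almost surely with $\E\|B^H\|^2_{\mathbb{H}^1(\mathbb{T})}<\infty$ (this is a standard consequence of $H>1/2$; see \cite{AMN, TinTudVien03}); then summing over $\ell,m$ against the $\ell$-independent per-mode contribution $\frac{2\ell+1}{4\pi}C_\ell$ again reduces to $\sum_\ell (\ell+1/2)^{1-\alpha}$, finite for $\alpha>2$. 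Hence the series defining $W^H(\cdot,x)$ converges in $L^2(\Omega;\mathbb{H}^1(\mathbb{T}))$ and the limit has a version in $\mathbb{H}^1(\mathbb{T})$ a.s.

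For part (ii), I would show $W^H(t,\cdot)\in C^1(\mathbb{S}^2)$ a.s.\ for fixed $t$ by controlling the spatial increments of $W^H$ and of its formal gradient. The natural approach is a Kolmogorov-type continuity criterion on $\mathbb{S}^2$: estimate $\E|\nabla_{\mathbb{S}^2} W^H(t,x) - \nabla_{\mathbb{S}^2} W^H(t,y)|^2$ in terms of $d_{\mathbb{S}^2}(x,y)$ using the series, where differentiating $Y_{\ell m}$ brings down a factor of order $\ell$. Summing over $m$ via the addition theorem and its derivative versions, the relevant series becomes commensurate with $\sum_\ell (\ell+1/2)^{2}\cdot(\ell+1/2)^{-\alpha}\,(\ell+1/2)^{1} = \sum_\ell (\ell+1/2)^{3-\alpha}$ for the gradient field itself, finite when $\alpha > 4$; the increment estimate then yields a Hölder modulus sufficient to apply Kolmogorov's theorem on the compact manifold $\mathbb{S}^2$, giving a continuously differentiable version.

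The main obstacle I anticipate is part (ii): making the differentiation of the spherical-harmonic series rigorous and obtaining the correct power of $\ell$ from the gradient. The subtlety is that $\sum_m |\nabla_{\mathbb{S}^2} Y_{\ell m}(x)|^2$ and the mixed quantity $\sum_m \langle \nabla Y_{\ell m}(x), \nabla Y_{\ell m}(y)\rangle$ must be evaluated, which requires derivative forms of the addition theorem $\sum_m Y_{\ell m}(x)\overline{Y_{\ell m}(y)} = \frac{2\ell+1}{4\pi}P_\ell(\langle x,y\rangle)$; bounding the resulting Gegenbauer/Legendre derivative expressions uniformly in $\ell$ and in $d_{\mathbb{S}^2}(x,y)$ is where the real work lies. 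One must also confirm that the differentiated series converges uniformly so that termwise differentiation is legitimate, which again comes down to the summability $\sum_\ell (\ell+1/2)^{3-\alpha}<\infty$ for $\alpha>4$. Parts (i) and the convergence claim, by contrast, are essentially bookkeeping once the per-mode fBm estimates and the addition theorem are in hand.
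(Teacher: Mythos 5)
Your overall architecture --- reduce every claim to second moments via the spectral representation, the orthogonality relations for the $\beta_{\ell m}$, and the addition theorem $\sum_{m}|Y_{\ell m}(x)|^{2}=\frac{2\ell+1}{4\pi}$, so that each assertion becomes summability of a power of $(\ell+1/2)$ --- is exactly the paper's, and your treatment of the zero-mean/isotropy/uniform $L^{2}(\Omega)$ convergence and of part (ii) is sound. For (ii) you in fact go further than the paper: the paper only computes $\mathbb{E}|\nabla_{\mathbb{S}^{2}}W^{H}(t,x)|^{2}= c\, t^{2H}\sum_{\ell}(\ell+1/2)^{3-\alpha}<\infty$ for $\alpha>4$ by differentiating the Legendre addition formula at $x=y$ (using $P_{\ell}'(1)=\ell(\ell+1)/2$), whereas you propose to also control the increments of the gradient field and apply a Kolmogorov-type criterion on $\mathbb{S}^{2}$. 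That extra step is what actually upgrades mean-square differentiability to an a.s.\ $C^{1}$ modification, and the strict margin $\alpha>4$ leaves room for the positive H\"older exponent you need, so your route for (ii) is the more complete one (at the cost of the derivative addition-theorem estimates you correctly identify as the real work).

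The genuine problem is the key lemma you invoke for part (i): it is \emph{not} true that a real-valued fBm $B^{H}$ with $H>1/2$ has sample paths in $\mathbb{H}^{1}(\mathbb{T})$ a.s., let alone that $\mathbb{E}\|B^{H}\|^{2}_{\mathbb{H}^{1}(\mathbb{T})}<\infty$; this fact does not appear in the references you cite. An element of $\mathbb{H}^{1}(\mathbb{T})$ is absolutely continuous and hence differentiable a.e., while $B^{H}$ is a.s.\ nowhere differentiable for every $H\in(0,1)$ (its local oscillation is of order $h^{H}\sqrt{\log\log(1/h)}$, never $O(h)$); equivalently, $\mathbb{E}|B^{H}(t)-B^{H}(s)|^{2}=|t-s|^{2H}$ places $B^{H}$ in the fractional Sobolev space $W^{s,2}$ only for $s<H<1$. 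So the per-mode estimate on which your part (i) rests fails, and summing over $(\ell,m)$ cannot repair it: for fixed $x$ the process $W^{H}(\cdot,x)$ is, in law, a constant multiple of a single $B^{H}$. For what it is worth, the paper's own argument for (i) is a formal version of the same reduction --- it evaluates $\mathbb{E}|D_{t}\beta_{\ell m}(t)|^{2}$ as $\frac{d^{2}}{dt^{2}}\mathbb{E}|\beta_{\ell m}(t)|^{2}\propto t^{2H-2}$, thereby conflating $\partial_{t}\partial_{s}R_{H}(t,s)\big|_{s=t}$ with $\frac{d^{2}}{dt^{2}}R_{H}(t,t)$ --- so your proposal reproduces the paper's route here rather than diverging from it; but as a proof of the literal statement ``$W^{H}(\cdot,x)\in\mathbb{H}^{1}(\mathbb{T})$ a.s.'' the step does not go through, and you should not present the $\mathbb{H}^{1}$ regularity of a single fBm as a standard fact.
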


%{\ 
%\textcolor{blue}{ For any fixed $x \in \S^2$, the sample function $t \mapsto W^{H}(t,x)$ does not have to be differentiable.
%Also, intuitively, it seems to me that $\alpha$ decides the smoothness of $x \mapsto W^{H}(t,x)$ ($t$ is fixed).
%Can you please check?} } 
%\textcolor{red}{Yes, $W^{H}(t)$ is not differentiable as $t\rightarrow 0^{+}.$
%The order of $|\frac{\partial }{\partial t}W^{H}(t,x)|$ is $t^{2H-2}$.}

\begin{proof}
Based on the previous discussion, we only need to prove (i) and (ii). First
notice that the weak derivatives $\nabla _{{\mathbb{S}}^{2}}W^{H}$ and $D_{t}W^{H}$\ exist due to the uniform 
convergence of $W^{H}$ in $L^{2}(\Omega )$. For part (i), recall    
\begin{equation} \label{SumYlm^2}
\sum_{m=-\ell }^{\ell }Y_{\ell m}(x)\overline{Y_{\ell m}}(y)=\frac{2\ell +1}{4\pi }P_{\ell }(\left\langle x,y\right\rangle ) 
\end{equation}%
(c.f. \cite{MPbook}, section 3.4.2), then careful calculations show that

\begin{equation*}
\begin{split}
& \mathbb{E}\int_{0}^{T}\left\vert D_{t}W^{H}(t,x)\right\vert^{2}dt
=\int_{0}^{T}\mathbb{E}\Big|\sum_{\ell \geq 0}\sum_{m=-\ell }^{\ell }
\sqrt{C_{\ell }}\frac{d}{dt}\beta _{\ell m}(t)Y_{\ell m}(x)\Big|^{2}dt \\
& =\sum_{\ell \geq 0} \frac{2\ell +1}{4\pi }C_{\ell }\int_{0}^{T}\frac{d^{2}}{dt^{2}}
\mathbb{E}|\beta _{\ell m}(t)|^{2}\,dt \\
& =\sum_{\ell \geq 0} \frac{2\ell +1}{4\pi }C_{\ell }\int_{0}^{T}t^{2H-2}dt
=c_{2,1}T^{2H-1}
\end{split}%
\end{equation*}%
for some constant $c_{2,1}>0$ depending on $H.$ The last equality holds if
and only if $\alpha >2$ and $H>\frac{1}{2}$.

For part (ii), recall the gradient $\nabla_{\mathbb{S}^{2}}$ on $\mathbb{S}^{2}$ defined as follows: 
\begin{equation*}
\nabla _{\mathbb{S}^{2}}= \Big(\frac{\partial }{\partial \vartheta },(\sin \vartheta )^{-1}\frac{\partial }{\partial \phi }\Big).
\end{equation*}
For any $x,y\in {\mathbb{S}}^{2}$, denote by $x=\left( \vartheta _{x},\phi _{x}\right) $ and
$\ y=\left( \vartheta _{y},\phi _{y}\right)$. Then careful calculations show that 
\begin{equation*}
\frac{\partial ^{2}}{\partial \vartheta _{y}\partial \vartheta {x}}
P_{\ell }(\left\langle x,y\right\rangle )\left\vert _{x=y}\right. 
=P_{\ell }^{\prime}(1)=-\frac{\ell (\ell +1)}{2},
\end{equation*}%
and 
\begin{equation*}
\frac{\partial ^{2}}{\partial \phi _{y}\partial \phi _{x}}P_{\ell }(\left\langle x,y\right\rangle )\left\vert _{x=y}\right. 
=(\sin \vartheta )^{2}P_{\ell }^{\prime }(1)=-(\sin \vartheta )^{2}\frac{\ell (\ell +1)}{2}.
\end{equation*}%
Therefore, we obtain 
\begin{equation*}
\begin{split}
& \mathbb{E} \Big|\frac{\partial }{\partial \vartheta }W^{H}(t,\vartheta ,\phi )\Big|^{2}
=\left\vert \frac{\partial }{\partial \vartheta _{x}}\frac{\partial }{\partial \vartheta _{y}}
\mathbb{E}\left( W^{H}(t,x)\overline{W^{H}}(t,y)\right) \big\vert _{x=y} \right\vert  \\
& =\bigg\vert \sum_{\ell \geq 0}%\sum_{m=-\ell }^{\ell }
C_{\ell }\frac{2\ell +1}{4\pi }
\mathbb{E}|\beta _{\ell m}(t)|^{2}\left\{ \frac{\partial ^{2}}{\partial \vartheta _{y}\partial \vartheta _{x}}
P_{\ell }(\left\langle x,y\right\rangle )\big\vert _{x=y} \right\} \bigg\vert  \\
& =t^{2H}\sum_{\ell \geq 0}%\sum_{m=-\ell }^{\ell }
C_{\ell }\frac{2\ell +1} {4\pi }\cdot \frac{\ell (\ell +1)}{2} \\
& \leq \frac{c_{0}}{4\pi }t^{2H}\sum_{\ell \geq 0}\sum_{m=-\ell }^{\ell }(\ell +\frac{1}{2})^{3-\alpha }.
\end{split}%
\end{equation*}%
The last term above is finite if and only if $\alpha >4$. Similarly, we have 
\begin{equation*}
\mathbb{E} \Big|\frac{\partial }{\partial \phi }W^{H}(t,\vartheta ,\phi )\Big|^{2}
\leq \frac{(\sin \vartheta )^{2}}{4\pi }t^{2H}c_{0}\sum_{\ell \geq 0}\sum_{m=-\ell }^{\ell }(\ell +\frac{1}{2})^{3-\alpha }
<\infty ,
\end{equation*}%
and hence
\begin{equation*}
\begin{split}
&\mathbb{E} \Big|\nabla _{{\mathbb{S}}^{2}}W^{H}(t,\vartheta ,\phi )\Big|^{2} \\
&\leq\mathbb{E}\Big|\frac{\partial }{\partial \vartheta }W^{H}(t,\vartheta,\phi )\Big|^{2}
+\frac{1}{(\sin \vartheta )^{2}}\mathbb{E} \Big|\frac{\partial }{\partial \phi }W^{H}(t,\vartheta ,\phi )\Big|^{2}
<\infty 
\end{split}
\end{equation*}
for $\alpha >4$. The proof is then completed.
\end{proof}

\section{Existence of the Solution and Proof of (a) in Theorem \ref{Th:Main} \label{Sec:Existence}}

Recall the form (\ref{specW}) of $W^{H}$, the following is an immediate consequence of Theorem 1 
of Tindel et al. \cite{TinTudVien03}. 

\begin{proposition}
\label{Prop:general exist} 
Assume that $H > 1/2$ and Conditions \textbf{(A.1)} and \textbf{(A.2)} hold. Then
there exists a unique solution $u\in L^{2}(\Omega \times \mathbb{T}\times \mathbb{S}^{2})$ for 
eq.(\ref{def:heat eq}) with the following mild form\ 
\begin{equation} \label{mild-form}
u(t,x)= \sum_{\ell \geq 0}\sum_{m=-\ell }^{\ell }\bigg(e^{-\ell (\ell +1)t}u_{0,\ell m}+
\sqrt{C_{\ell }}\int_{0}^{t}e^{-\ell (\ell +1)(t-s )}d\beta_{\ell m}(s )\bigg)Y_{\ell m}(x)
\end{equation}%
for $(t,x)\in \mathbb{T\times S}^{2}$.  
\end{proposition}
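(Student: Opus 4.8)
The plan is to recognize eq.(\ref{def:heat eq}) as an abstract stochastic evolution equation on the Hilbert space $L^{2}(\mathbb{S}^{2})$ driven by the infinite-dimensional fractional Brownian noise $W^{H}$, and then to invoke Theorem 1 of \cite{TinTudVien03} after checking its hypotheses in the present setting. The Laplace--Beltrami operator is diagonalized by the spherical harmonics, $\Delta_{\mathbb{S}^{2}}Y_{\ell m}=-\ell(\ell+1)Y_{\ell m}$, so it generates the heat semigroup $S(t)=e^{t\Delta_{\mathbb{S}^{2}}}$ acting by $S(t)Y_{\ell m}=e^{-\ell(\ell+1)t}Y_{\ell m}$. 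In this framework the mild solution is the variation-of-constants expression $u(t)=S(t)u_{0}+\int_{0}^{t}S(t-s)\,dW^{H}(s)$, and projecting onto the orthonormal basis $\{Y_{\ell m}\}$ while using the spectral representation (\ref{specW}) of $W^{H}$ produces exactly (\ref{mild-form}). Hence the whole content of the proposition reduces to showing that this series converges in $L^{2}(\Omega\times\mathbb{T}\times\mathbb{S}^{2})$, which is precisely the integrability condition demanded by the abstract existence theorem.

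To verify convergence I would estimate the $L^{2}(\Omega\times\mathbb{T}\times\mathbb{S}^{2})$ norm of the two parts of (\ref{mild-form}) separately. Writing $\lambda_{\ell}=\ell(\ell+1)$ and $I_{\ell m}(t)=\sqrt{C_{\ell}}\int_{0}^{t}e^{-\lambda_{\ell}(t-s)}\,d\beta_{\ell m}(s)$ for the stochastic convolution term, the orthogonality relation (\ref{indep-beta}) together with the addition formula (\ref{SumYlm^2}) collapses the spatial $L^{2}$-norm at level $\ell$ to $\frac{2\ell+1}{4\pi}\,\mathbb{E}|I_{\ell m}(t)|^{2}$. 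The variance is then evaluated via the fBm inner product (\ref{fBm-inprod}), producing the double time integral of $e^{-\lambda_{\ell}(2t-s-r)}|s-r|^{2H-2}$ over $[0,t]^{2}$.

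The heart of the matter is the uniform bound $\int_{0}^{t}\int_{0}^{t}e^{-\lambda_{\ell}(2t-s-r)}|s-r|^{2H-2}\,ds\,dr\lesssim\lambda_{\ell}^{-2H}$, valid for all large $\ell$ and all $t\in\mathbb{T}$. I would obtain it by the scaling substitution $u=\lambda_{\ell}(t-s),\ v=\lambda_{\ell}(t-r)$, which rewrites the integral as $\lambda_{\ell}^{-2H}\int_{0}^{\lambda_{\ell}t}\int_{0}^{\lambda_{\ell}t}e^{-u-v}|u-v|^{2H-2}\,du\,dv$, with the remaining factor bounded by the convergent constant $\int_{0}^{\infty}\int_{0}^{\infty}e^{-u-v}|u-v|^{2H-2}\,du\,dv<\infty$, finite precisely because $H>1/2$ makes the exponent $2H-2\in(-1,0)$ an integrable singularity. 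Combining the pieces, the level-$\ell$ contribution to $\mathbb{E}\int_{0}^{T}\|u(t,\cdot)\|_{L^{2}(\mathbb{S}^{2})}^{2}\,dt$ is of order $\frac{2\ell+1}{4\pi}C_{\ell}\lambda_{\ell}^{-2H}\approx\ell^{1-\alpha-4H}$, so the series converges if and only if $\alpha+4H>2$. The key observation, which I expect to be the conceptual crux rather than a computational one, is that this condition is automatic under the standing hypotheses: $H>1/2$ forces $4H>2$ and $\alpha>0$, so $\alpha+4H>2$ always holds, explaining why no extra restriction appears in the statement.

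The initial-condition part $\sum_{\ell,m}e^{-\lambda_{\ell}t}u_{0,\ell m}Y_{\ell m}(x)$ is treated identically: by Condition \textbf{(A.2)} its level-$\ell$ contribution is $\frac{2\ell+1}{4\pi}e^{-2\lambda_{\ell}t}D_{\ell}\lesssim\ell^{1-\beta}$, which is summable since $\beta>4$ (and the exponential decay only helps). Uniqueness in $L^{2}(\Omega\times\mathbb{T}\times\mathbb{S}^{2})$ I would inherit directly from the abstract uniqueness in Theorem 1 of \cite{TinTudVien03}, since any two $L^{2}$ mild solutions must have identical spectral coefficients solving the same scalar integral equation. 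The only genuinely delicate step is the uniform-in-$t$ scaling estimate for the fractional Ornstein--Uhlenbeck variance together with the finiteness of its limiting constant at $H>1/2$; the rest is bookkeeping with the orthonormal expansion.
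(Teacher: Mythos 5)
Your proposal is correct and follows essentially the same route as the paper: both reduce the proposition to Theorem 1 of \cite{TinTudVien03} and to the summability of $\sum_{\ell}\frac{2\ell+1}{4\pi}C_{\ell}\,[\ell(\ell+1)]^{-2H}\approx\sum_{\ell}\ell^{\,1-\alpha-4H}$, which is automatic since $H>1/2$ and $\alpha>0$ give $\alpha+4H>2$. The only cosmetic difference is that the paper packages this sum as the trace-class condition $\mathrm{Tr}\bigl(\Phi^{*}G_{H}(-\Delta_{\mathbb{S}^{2}})\Phi\bigr)<\infty$ demanded by the cited theorem, whereas you obtain the same bound $\sigma_{\ell}^{2}(t)\lesssim[\ell(\ell+1)]^{-2H}$ by a direct scaling substitution in the fractional Ornstein--Uhlenbeck variance; both verifications are equivalent and your uniform-in-$t$ estimate is valid.
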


\begin{proof}
First we take the spherical harmonics $\{Y_{\ell m}\}_{\ell m}$ as the orthonormal basis of $L^{2}(\mathbb{S}^{2})$ 
and define $\Phi: L^{2}(\mathbb{S}^{2})\to L^{2}(\mathbb{S}^{2}) $ a positive linear operator such that
 \begin{equation*}
\Phi(Y_{\ell m})=\sqrt{C_{\ell}}Y_{\ell m}.  
\end{equation*}
It is readily seen that the adjoint operator $\Phi^{*}=\Phi$. In the meantime, let $\mathcal{B}^H$ be the cylindrical 
fractional Brownian motion on the Hilbert space $L^{2}(\mathbb{S}^{2})$ defined as 
\begin{equation*}
\mathcal{B}^H(t)=\sum_{\ell \geq 0}\sum_{m=-\ell }^{\ell } \beta_{\ell m}(t) Y_{\ell m},
\end{equation*}
where $\{\beta_{\ell m}(t)\}_{\ell, m}$ is the sequence of complex-valued fBm$s$ in Definition \ref{def:W^H}. Then, 
 \begin{equation*}
\Phi \mathcal{B}^H(t)=:\sum_{\ell \geq 0}\sum_{m=-\ell }^{\ell } \beta_{\ell m}(t) \Phi Y_{\ell m}=W^{H}(t)
\end{equation*}
is well defined in the Hilbert space $U$ with inner product defined in (\ref{def-Hilbert U}).  See \cite[p.190]{TinTudVien03}  
for more discussion about the stochastic integration  $w. r. t.$ $d (\Phi \mathcal{B}^H(t))=\Phi d\mathcal{B}^H(t)$. 

Now we are ready to prove Proposition \ref{Prop:general exist}. It is known from \cite{TinTudVien03} that the mild 
solution of eq.(\ref{def:heat eq})  is unique if it exists and can be written as 
\begin{equation} \label{mild-form0}
u(t)=e^{t\Delta_{\mathbb{S}^{2}}}u_0+\int_{0}^{t}e^{(t-s)\Delta_{\mathbb{S}^{2}}}\Phi d\mathcal{B}^H(s),\ \  t\in [0,T].
\end{equation}
To prove the existence, we only need to establish that the $u(t)$ defined in (\ref{mild-form0}) belongs to 
$L^{2}(\Omega \times \mathbb{S}^{2}) $ for every $t\in [0,T]$.  It is readily seen that under Condition \textbf{(A.2)}, 
\begin{equation*}
e^{t\Delta_{\mathbb{S}^{2}}}u_0= \sum_{\ell \geq 0}\sum_{m=-\ell }^{\ell }e^{-\ell (\ell +1)t}u_{0,\ell m} Y_{\ell m} \in L^{2}(\Omega \times \mathbb{S}^{2}) . 
\end{equation*}  
For the second part on the right hand side of equation (\ref{mild-form0}), we first set the function $G_{H}(\lambda)=( \max {\{\lambda,1\}})^{-2H}$.
 A simple calculation yields that under the spherical harmonic basis $\{Y_{\ell m}\}_{\ell, m}$, the trace of the operator $\Phi^{*}G_{H}(-\Delta_{\mathbb{S}^{2}})\Phi$
\begin{equation*}
\begin{split}
&Tr(\Phi^{*}G_{H}(-\Delta_{\mathbb{S}^{2}})\Phi)=\sum_{\ell \geq 0}\sum_{m=-\ell }^{\ell }\left\langle \Phi^{*}G_{H}(-\Delta_{\mathbb{S}^{2}})
\Phi Y_{\ell m}, Y_{\ell m}\right\rangle_{L^{2}(\mathbb{S}^{2})} \\
&= C_{0}+\sum_{\ell \geq 1}  \frac{2 \ell +1}{4\pi} C_{\ell} (\ell (\ell +1))^{2H} \\
&\leq C_{0}+c_{0} \sum_{\ell \geq 1} \frac{2 \ell +1}{4\pi} (\ell+1/2)^{-\alpha} (\ell (\ell +1))^{-2H}  < \infty
\end{split}
\end{equation*}   
under conditions \textbf{(A.1)} and $H>1/2$. Hence, $\Phi^{*}G_{H}(-\Delta_{\mathbb{S}^{2}})\Phi$ is in the trace class by the fact that this 
operator is positive. Therefore, the unique existence of the mild solution (\ref{mild-form}) to eq.(\ref{def:heat eq}) is obtained by Theorem 1 
in \cite{TinTudVien03}. 
\end{proof}

%In this section, we shall focus on the weak form of the solution to eq. (\ref{def:heat eq}) which uses 
%test functions. Nevertheless, we will still use the mild form (\ref{mild-form}) as it has been indicated 
%in \cite{TinTudVien03} that these two forms are actually equivalent in the space $L^{2}(\Omega, \mathbb{P})$. 

We shall study in more details  the sample path 
properties of the solution $u(t, x)$ in (\ref{mild-form}) in the following sections. At this moment, %we 
%notice that, the conditions in Proposition \ref{Prop:general exist} hold naturally for $H>1/2$
%or $\alpha >2$. 
let us first focus on the special case $H>1/2$ and $\alpha >2$. %Recall 
%(\ref{u-int-form}), 
We write $u(t,x)$ in (\ref{mild-form}) in the following form: 
\begin{equation} \label{def:Solution}
u(t,x)=\sum_{\ell =0}^{\infty }u_{\ell }(t,x)=\sum_{\ell =0}^{\infty
}\sum_{m=-\ell }^{\ell }u_{\ell m}(t)Y_{\ell m}(x),
\end{equation}%
where 
\begin{equation}
u_{\ell m}(t)=e^{-\ell (\ell +1)t}u_{0,\ell m}+\sqrt{C_{\ell }}%
\int_{0}^{t}e^{-\ell (\ell +1)(t-\lambda )}d\beta _{\ell m}(\lambda ).
\label{coef-u(t)}
\end{equation}%
Note that under Conditions \textbf{(A.1) }and\textbf{\ (A.2)}, $u_{0}$ and $W^{H}$ are independent, 
which implies that the two sequences $\{u_{0,\ell m}\}_{\ell m}$ and $\{\beta _{\ell m}\}_{\ell m}$ are 
mutually independent. Moreover, 
\begin{equation}
\mathbb{E}u_{\ell m}(t)=0,\quad \mathbb{E[}u_{\ell m}(t)\overline{u_{\ell ^{\prime
}m^{\prime }}}(s)]=\delta _{\ell }^{\ell ^{\prime }}\delta _{m}^{m^{\prime
}}U_{\ell }(t,s),  \label{Cov:ulm}
\end{equation}%
where 
\begin{equation}
U_{\ell }(t,s)=e^{-\ell (\ell +1)(t+s)}D_{\ell }+C_{\ell
}\int_{0}^{t}\int_{0}^{s}e^{-\ell (\ell +1)(t+s-\lambda -\xi )}|\xi -\lambda
|^{2H-2}d\xi d\lambda  \label{def:Ul}
\end{equation}%
in view of (\ref{fBm-inprod}). Here $U_{\ell }(t,s)=U_{\ell }(s,t)$ for any $s,t\in \mathbb{T}$. 

Now we prove the following lemma which implies that the series in (\ref{def:Solution}) 
converges uniformly both in the senses of $L^{2}(\Omega )$ 
for all $(t,x)\in \mathbb{T\times S}^{2}$ and $L^{2}(\Omega \times \mathbb{T})$ 
for all $x\in {\mathbb{S}}^{2}$.

\begin{lemma}
\label{Lem:convergence} 
Assume $1/2<H<1$, Conditions \textbf{(A.1) } and \textbf{(A.2)}. Then there exists a constant $K_{3,1}>0$,
depending on $\alpha ,\beta $ and $H$, such that for any $L$ large, 
\begin{equation}
\mathbb{E}\bigg\vert u(t,x)-\sum_{\ell =0}^{L}u_{\ell }(t,x)\bigg\vert^{2}
\leq K_{3,1}\left\{ t^{\beta /2-1}L^{-\beta }e^{-L(L+1)t}
+L^{-(\alpha-2+4H)}\right\}  
\label{Eq:conv-t}
\end{equation}%
for any fixed $(t,x)\in \mathbb{T\times S}^{2}$. Moreover, 
\begin{equation}
\mathbb{E}\bigg\Vert u(\cdot ,x)-\sum_{\ell =0}^{L}u_{\ell }(\cdot ,x)\bigg\Vert_{L^{2}\left(\mathbb{T}\right) }^{2}
\leq K_{3,1}\max \left\{ T^{\beta /2-1},T\right\} L^{-\min \left\{ \beta -2,\ \alpha -2+4H\right\} }
\label{Eg:conv-Int}
\end{equation}%
for every $x\in {\mathbb{S}}^{2}$.
\end{lemma}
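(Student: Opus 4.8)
The plan is to exploit the orthogonality of the coefficients to collapse the $L^2$-norm of the tail into a single sum over $\ell$, and then to split that sum into an initial-condition part and a colored-noise part and bound each separately. First I would observe that, by \eqref{Cov:ulm}, the $u_{\ell m}(t)$ are uncorrelated across distinct pairs $(\ell,m)$, so the blocks $u_\ell(t,x)=\sum_{m}u_{\ell m}(t)Y_{\ell m}(x)$ are mutually orthogonal in $L^2(\Omega)$; combined with the addition formula \eqref{SumYlm^2} at $x=y$ (which gives $\sum_{m=-\ell}^\ell|Y_{\ell m}(x)|^2=\tfrac{2\ell+1}{4\pi}$, since $P_\ell(1)=1$), this yields
\[
\mathbb{E}\Big|u(t,x)-\sum_{\ell=0}^L u_\ell(t,x)\Big|^2=\sum_{\ell>L}\frac{2\ell+1}{4\pi}\,U_\ell(t,t),
\]
with $U_\ell(t,t)$ read off from \eqref{def:Ul}. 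Writing $a_\ell=\ell(\ell+1)$, I would decompose $U_\ell(t,t)=e^{-2a_\ell t}D_\ell+C_\ell I_\ell(t)$, where $I_\ell(t)=\int_0^t\!\int_0^t e^{-a_\ell(2t-\lambda-\xi)}|\xi-\lambda|^{2H-2}\,d\xi\,d\lambda$, and treat the two pieces in turn.

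For the colored-noise piece I would substitute $\lambda=t-u$, $\xi=t-v$ and then rescale $u,v\mapsto u/a_\ell,\,v/a_\ell$, obtaining the uniform bound $I_\ell(t)\le\kappa_H\,a_\ell^{-2H}$ with $\kappa_H=\int_0^\infty\!\int_0^\infty e^{-(u+v)}|u-v|^{2H-2}\,du\,dv$; finiteness of $\kappa_H$ is precisely where $H>1/2$ (i.e. $2H-2>-1$) is used. Invoking Condition \textbf{(A.1)} in the form $C_\ell\approx(\ell+\tfrac12)^{-\alpha}$ together with $a_\ell\approx\ell^2$, the colored-noise contribution is controlled by $\sum_{\ell>L}(\ell+\tfrac12)^{1-\alpha}a_\ell^{-2H}\approx\sum_{\ell>L}\ell^{1-\alpha-4H}\approx L^{-(\alpha-2+4H)}$, which is the second term in \eqref{Eq:conv-t}; the series converges because $\alpha+4H>2$, automatic for $H>1/2$.

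For the initial-condition piece I would use Condition \textbf{(A.2)}, $D_\ell\le D_0(\ell+\tfrac12)^{-\beta}$, reducing matters to estimating $\sum_{\ell>L}(\ell+\tfrac12)^{1-\beta}e^{-2a_\ell t}$. Since $\beta>1$ the summand is decreasing in $\ell$, so I would compare it with $\int_L^\infty(x+\tfrac12)^{1-\beta}e^{-2x(x+1)t}\,dx$ and then change variables via $y=2x(x+1)t$ (so $y\approx 2x^2t$, with lower endpoint $\approx L^2t$). This produces a factor $t^{\beta/2-1}$ multiplying an incomplete Gamma integral $\int_{2L(L+1)t}^\infty y^{-\beta/2}e^{-y}\,dy$; extracting the boundary exponential $e^{-L(L+1)t}$ (using $a_\ell\ge a_L$ for $\ell>L$) and bounding the remainder delivers the first term $t^{\beta/2-1}L^{-\beta}e^{-L(L+1)t}$. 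Adding the two contributions gives \eqref{Eq:conv-t}.

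Finally, for \eqref{Eg:conv-Int} I would integrate the pointwise bound \eqref{Eq:conv-t} over $t\in[0,T]$ termwise. The colored-noise term, being bounded by $L^{-(\alpha-2+4H)}$ uniformly in $t$, contributes at most $T\,L^{-(\alpha-2+4H)}$; the initial-condition term contributes $L^{-\beta}\int_0^T t^{\beta/2-1}e^{-L(L+1)t}\,dt$, which I would dominate using $\int_0^\infty t^{\beta/2-1}e^{-pt}\,dt=\Gamma(\beta/2)p^{-\beta/2}$ with $p=L(L+1)$, together with a crude $T^{\beta/2-1}$ estimate near $t=0$. Retaining the slower $L$-decay $L^{-\min\{\beta-2,\alpha-2+4H\}}$ and the larger $T$-power $\max\{T^{\beta/2-1},T\}$ then yields the stated bound. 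I expect the initial-condition piece to be the main obstacle: the double time integral $I_\ell$ must be controlled uniformly in both $t$ and $\ell$, and, more delicately, the tail sum $\sum_{\ell>L}(\ell+\tfrac12)^{1-\beta}e^{-2a_\ell t}$ changes character across the regimes $a_\ell t\lesssim 1$ and $a_\ell t\gtrsim 1$, so the interplay between the polynomial prefactor $t^{\beta/2-1}$ and the exponential $e^{-L(L+1)t}$—and the corresponding power bookkeeping in $T$—has to be tracked with care to arrive at exactly the forms \eqref{Eq:conv-t} and \eqref{Eg:conv-Int}.
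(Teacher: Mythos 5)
Your proposal follows the same overall architecture as the paper's proof: orthogonality of the $u_{\ell m}$ plus the addition formula collapse the tail into $\sum_{\ell>L}\frac{2\ell+1}{4\pi}U_\ell(t,t)$, which is then split into the initial-condition part ($I_1$ in the paper) and the stochastic-convolution part ($I_2$), and the integrated bound is obtained by integrating in $t$. The one genuinely different ingredient is your treatment of the noise variance $\sigma_\ell^2(t)=\int_0^t\int_0^t e^{-\ell(\ell+1)(2t-\lambda-\xi)}|\xi-\lambda|^{2H-2}\,d\xi\,d\lambda$: you rescale in the time domain to get $\sigma_\ell^2(t)\le \kappa_H[\ell(\ell+1)]^{-2H}$ with $\kappa_H=\int_0^\infty\int_0^\infty e^{-(u+v)}|u-v|^{2H-2}\,du\,dv<\infty$ for $H>1/2$, whereas the paper passes to the spectral side via the identity \eqref{Fourier transform}, the explicit Fourier transform \eqref{Fourier-g_l} of $g_\ell$, and the integral \eqref{int-tau}. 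Both routes give the same bound $[\ell(\ell+1)]^{-2H}$ uniformly in $t$ and hence the same tail $L^{-(\alpha-2+4H)}$; yours is more elementary and self-contained, while the paper's spectral setup is not wasted effort because the same Fourier machinery is reused later for the variogram (Lemma \ref{Lem:A_l2}) and the strong local nondeterminism (Proposition \ref{Prop:SLND-t}).

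One caveat on the $I_1$ endgame, which you share with the paper rather than introduce yourself: your change of variables correctly yields $I_1\lesssim t^{\beta/2-1}\Gamma(1-\beta/2,\,cL^2t)$, but converting the incomplete Gamma factor into $L^{-\beta}e^{-L(L+1)t}$ with a constant independent of $t$ costs a factor of order $t^{-\beta/2}e^{-L^2t}$, which is bounded only when $L^2t$ dominates $\log(1/t)$; around $t\asymp L^{-2}$ the true size of $I_1$ is of order $L^{2-\beta}$, which exceeds $t^{\beta/2-1}L^{-\beta}e^{-L(L+1)t}\asymp L^{2-2\beta}$. So the displayed form of \eqref{Eq:conv-t} should be read, as the statement's wording suggests, for fixed $t$ with $L$ large depending on $t$; under that reading your argument (and the paper's) closes, and the integrated bound \eqref{Eg:conv-Int} is unaffected since $\int_0^T I_1\,dt\lesssim \sum_{\ell>L}\ell^{-1-\beta}\lesssim L^{-\beta}\le L^{-(\beta-2)}$ directly. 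Apart from this shared imprecision, your proposal is correct.
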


%\begin{remark}
%That is, the series in (\ref{def:Solution}) converges uniformly both in the sense of $L^{2}(\Omega )$ 
%for all $(t,x)\in \mathbb{T\times S}^{2}$ and $L^{2}(\Omega \times \mathbb{T})$ for all $x\in {\mathbb{S}}^{2}$.
%\end{remark}

\begin{proof}
Recall equation (\ref{coef-u(t)}) for $u_{\ell m}(t)$. We have 
\begin{equation*}
\mathbb{E}\bigg|u(t)-\sum_{\ell =0}^{L}u_{\ell }(t)\bigg|^{2} 
=\mathbb{E}\bigg|\sum_{\ell =L+1}^{\infty }u_{\ell }(t)\bigg|^{2} =I_{1}+I_{2} ,
\end{equation*}%
where 
\begin{equation*}
I_{1}=\mathbb{E} \bigg\vert \sum_{\ell =L+1}^{\infty }\sum_{m=-\ell }^{\ell}
e^{-\ell (\ell +1)t}u_{0,\ell m}Y_{\ell m} \bigg\vert ^{2}
\end{equation*}%
and 
\begin{equation*}
I_{2}=\mathbb{E}\bigg\vert \sum_{\ell =L+1}^{\infty }\sum_{m=-\ell }^{\ell } 
\sqrt{C_{\ell }}\int_{0}^{t}e^{-\ell (\ell +1)(t-s)}d\beta _{\ell m}(s)Y_{\ell m} \bigg\vert ^{2},
\end{equation*}%
in view of (\ref{Cov:ulm}). It is readily seen that under Conditions \textbf{(A.1)} and \textbf{(A.2)}, 
\begin{equation}
I_{1}=\sum_{\ell =L+1}^{\infty }e^{-\ell (\ell +1)t} \frac{2\ell +1}{4\pi }D_{\ell } 
\leq c_{3,1}t^{\beta /2-1}L^{-\beta }e^{-L(L+1)t}
\label{ineq:I1}
\end{equation}%
in view of the equality (\ref{SumYlm^2}). Here $c_{3,1}$ is some positive
constant depending on $\beta $. Hence, 
\begin{equation}
\int_{0}^{T}I_{1}dt\leq c_{3,2}T^{\beta /2-1}L^{-\beta -2}
\label{ineq:IntI1}
\end{equation}%
for some constant $c_{3,2}>0.$ For the second sum $I_{2}$, we have 
\begin{equation*}
I_{2}=\sum_{\ell =L+1}^{\infty }\sum_{m}C_{\ell }
\mathbb{E} \left\vert \int_{0}^{t}e^{-\ell (\ell +1)(t-s)}d\beta _{\ell m}(s) \right\vert^{2}
|Y_{\ell m}|^{2}
\end{equation*}%
in view of the properties (\ref{conj-beta}) and (\ref{indep-beta}) of $\beta_{\ell m}$ and the equality 
(\ref{SumYlm^2}). It is readily seen that the
process $\int_{0}^{t}e^{-\ell (\ell +1)(t-s)}d\beta _{\ell m}(s)$ is
Gaussian with mean zero. Now we focus on its variance, which is 
\begin{equation*}
\begin{split}
\sigma _{\ell }^{2}(t)&=: \mathbb{E}\left\vert \int_{0}^{t}e^{-\ell (\ell+1)(t-s)}d\beta _{\ell m}(s)\right\vert^{2}\\
&=\int_{0}^{t}\int_{0}^{t}e^{-\ell (\ell +1)(2t-s-r)}|s-r|^{2H-2}drds.
\end{split}
\end{equation*}%
Let 
\begin{equation}
g_{\ell }(s,\lambda ) =e^{-\ell (\ell +1)(s-\lambda )}1_{[0,s]}(\lambda ).
\label{def:g_l}
\end{equation}%
It is known that (see for instance \cite{BalTud08}, Lemma A.1.) 
\begin{equation}
\int_{0}^{T}\int_{0}^{T}g_{\ell }(t,\lambda )g_{\ell }(s,\xi ) |\xi -\lambda|^{2H-2}d\xi d\lambda 
=c_{H}\int_{\mathbb{R}}\widehat{g_{\ell }}(t,\tau ) \overline{\widehat{g_{\ell }}(s,\tau )}\frac{d\tau}{|\tau |^{2H-1}} ,
\label{Fourier transform}
\end{equation}%
where $c_{H}=\left[ 2^{2(1-H)}\pi ^{1/2}\right] ^{-1} \Gamma (H-1/2)/\Gamma(1-H)$ and 
\begin{equation}
\widehat{g_{\ell }}(t,\tau ) 
=\int_{0}^{t}e^{-i\tau \lambda } e^{-\ell (\ell +1)(t-\lambda )}d\lambda
 =\frac{e^{-it\tau }-e^{-\ell (\ell +1)t}}{\ell (\ell +1)-i\tau }.  \label{Fourier-g_l}
\end{equation}%
Moreover, 
\begin{equation}
\int_{\mathbb{R}}\frac{|\tau |^{-(2H-1)}} {[\ell (\ell +1)]^{2}+\tau ^{2}}d\tau 
=c_{3,3}[\ell (\ell +1)]^{-2H}  \label{int-tau}
\end{equation}%
with $c_{3,3}=\int_{\mathbb{R}}\frac{|\tau |^{-(2H-1)}}{1+\tau ^{2}}d\tau $. Thus, 
\begin{equation}
\begin{split}
I_{2}& =\sum_{\ell =L+1}^{\infty }\frac{2\ell +1}{4\pi } C_{\ell }\sigma_{\ell }^{2}(t) 
\leq \frac{c_{0}c_{3,3}}{2\pi }\sum_{\ell=L+1}^{\infty } (\ell +\frac{1}{2})^{1-\alpha -4H} \\
& \leq c_{3,4}L^{-(\alpha -2+4H)},
\end{split}
\label{ineq:I2}
\end{equation}%
with some positive constant $c_{3,4}$ depending on $\alpha $ and $H,$ which
leads to 
\begin{equation}
\int_{0}^{T}I_{2}dt\leq c_{3,4}L^{-(\alpha -2+4H)}T.  
\label{ineq:IntI2}
\end{equation}%
Combining inequalities (\ref{ineq:I1}), (\ref{ineq:IntI1}), (\ref{ineq:I2})
and (\ref{ineq:IntI2}), we obtain the approximations in Lemma \ref{Lem:convergence}.
\end{proof}

Based on the uniform convergence (\ref{Eq:conv-t}) in Lemma \ref{Lem:convergence} and  
(\ref{Cov:ulm}), we have for every pair of 
$(t,s)\in \mathbb{T}\times \mathbb{T}$, 
\begin{equation*}
\begin{split}
& {\mathbb{E}}\left[ u(t,x)u(s,y)\right] =\sum_{\ell =0}^{\infty }
U_{\ell }(t,s)\sum_{m=-\ell }^{\ell }Y_{\ell m}(x)\overline{Y_{\ell m}}(y) \\
& =\sum_{\ell =0}^{\infty }\frac{2\ell +1}{4\pi }U_{\ell }(t,s)P_{\ell }(\left\langle x,y\right\rangle )
=\sum_{\ell =0}^{\infty }\frac{2\ell +1}{4\pi }U_{\ell }(t,s)P_{\ell }(\left\langle gx,gy\right\rangle ) \\
& ={\mathbb{E}}\left[ u(t,gx)u(s,gy)\right] 
\end{split}%
\end{equation*}%
for all $g\in SO(3)$ and all $x,y\in \mathbb{S}^{2}$. In particular, for every $t \in \mathbb T$, the random field 
$u(t)=\{u(t, x), x \in \mathbb{S}^{2}\}$ is 2-weakly isotropic.

Now we are ready to give the following result, which is also part (a) of Theorem \ref{Th:Main}. More precise 
information on differentiability of $u(t, x)$ in variable $x$ can be found in Corollary \ref{Cor:derivative-x} below.

\begin{theorem}
\label{Th:Existence} Assume $1/2<H<1$ and Conditions \textbf{(A.1)} and \textbf{(A.2) } hold with $\alpha +4H>4$. 
Then the following properties hold:
\begin{enumerate}
\item[(i)] for every $t\in \mathbb{T}$,  we have $u(t,\cdot )\in C^{1}(\mathbb{S}^{2})$\ a.s.

\item[(ii)] if $\alpha +4H>6$, then for every $t\in \mathbb T$, $u(t,\cdot )\in C^{2}(\mathbb{S}^{2})$\ a.s. and, 
for every $x\in \mathbb{S}^{2}$, $u(\cdot ,x)\in \mathbb{H}^{1}(\mathbb{T})$\ a.s. 
\end{enumerate}
\end{theorem}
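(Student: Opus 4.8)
The plan is to establish smoothness of $u(t,\cdot)$ on $\mathbb{S}^2$ by controlling the $L^2(\Omega)$-norms of the angular derivatives of the series \eqref{def:Solution}, following the same pattern used in Proposition \ref{Prop: W^H} for the noise $W^H$ but now with the extra exponential damping $e^{-\ell(\ell+1)(t-\lambda)}$ coming from the heat semigroup, which is precisely what improves the summability threshold from $\alpha>4$ to $\alpha+4H>4$. First I would compute, for fixed $t$, the covariance of the gradient field. Using the isotropy identity \eqref{SumYlm^2} together with the two pointwise derivative evaluations $\frac{\partial^2}{\partial\vartheta_y\partial\vartheta_x}P_\ell(\langle x,y\rangle)\vert_{x=y}=P_\ell'(1)=-\ell(\ell+1)/2$ already recorded in the proof of Proposition \ref{Prop: W^H}, I would reduce $\mathbb{E}\big|\tfrac{\partial}{\partial\vartheta}u(t,\vartheta,\phi)\big|^2$ (and the analogous $\phi$-derivative) to a single series in $\ell$. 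The key point is that the $\ell$-th term now carries the factor $U_\ell(t,t)$ from \eqref{def:Ul} rather than $\mathbb{E}|\beta_{\ell m}(t)|^2=t^{2H}$; by the computation in \eqref{int-tau}--\eqref{ineq:I2} one has $C_\ell\sigma_\ell^2(t)\approx C_\ell[\ell(\ell+1)]^{-2H}\approx (\ell+\tfrac12)^{-\alpha-4H}$, so the series defining $\mathbb{E}|\nabla_{\mathbb{S}^2}u(t)|^2$ behaves like $\sum_\ell (\ell+\tfrac12)^{2}(\ell+\tfrac12)^{-\alpha-4H}=\sum_\ell(\ell+\tfrac12)^{2-\alpha-4H}$, which converges exactly when $\alpha+4H>4$. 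The $u_0$ contribution is harmless because the factor $e^{-2\ell(\ell+1)t}$ forces convergence for $t>0$, and under Condition (A.2) with $\beta>4$ it converges even more easily.

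Having the finiteness of these second-moment series only yields membership in a Sobolev-type space in the $L^2(\Omega\times\mathbb{S}^2)$ sense, so the main work is to upgrade this to the almost-sure statement $u(t,\cdot)\in C^1(\mathbb{S}^2)$. For this I would invoke a Gaussian Sobolev-embedding / Kolmogorov-type argument: since $u(t,\cdot)$ is a zero-mean isotropic Gaussian field, I would estimate the increment variance $\mathbb{E}|\nabla_{\mathbb{S}^2}u(t,x)-\nabla_{\mathbb{S}^2}u(t,y)|^2$ in terms of $d_{\mathbb{S}^2}(x,y)$ using the spectral representation and the gradient of $P_\ell(\langle x,y\rangle)$, obtaining a Hölder modulus with some positive exponent whenever $\alpha+4H>4$ (in fact with room to spare), and then apply the Kolmogorov continuity theorem on $\mathbb{S}^2$ to the gradient field to conclude that $\nabla_{\mathbb{S}^2}u(t,\cdot)$ has an a.s. continuous modification, hence $u(t,\cdot)\in C^1(\mathbb{S}^2)$ a.s.

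For part (ii) the argument is the same in spirit but one differentiates twice in $x$, which replaces the factor $(\ell+\tfrac12)^2$ by $(\ell+\tfrac12)^4$; the relevant series is $\sum_\ell(\ell+\tfrac12)^{4-\alpha-4H}$, convergent precisely when $\alpha+4H>6$, giving $u(t,\cdot)\in C^2(\mathbb{S}^2)$ a.s. after the same Kolmogorov upgrade applied to the second-order derivatives. For the time-regularity claim $u(\cdot,x)\in\mathbb{H}^1(\mathbb{T})$, I would mirror part (i) of Proposition \ref{Prop: W^H}: the weak time-derivative of each $u_{\ell m}(t)$ exists by the $L^2(\Omega)$ uniform convergence, and I would bound $\mathbb{E}\int_0^T|D_t u(t,x)|^2\,dt$ by a series in $\ell$. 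Differentiating \eqref{coef-u(t)} in $t$ produces, besides the regular fractional-noise term, a drift-type term $-\ell(\ell+1)\int_0^t e^{-\ell(\ell+1)(t-\lambda)}d\beta_{\ell m}(\lambda)$; after taking second moments and integrating in $t$, the factor $[\ell(\ell+1)]^2$ from this term is what raises the required smoothness to $\alpha+4H>6$. I would track these contributions via the Fourier-side identity \eqref{Fourier transform}--\eqref{int-tau}, checking that each resulting $\ell$-series converges under $\alpha+4H>6$.

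The main obstacle I anticipate is not the convergence of the individual series—those follow routinely from Condition (A.1) and the semigroup damping—but rather the passage from $L^2$ estimates to almost-sure continuity of the derivative fields, i.e. justifying that term-by-term differentiation of the random series \eqref{def:Solution} is legitimate and that the formally differentiated series genuinely represents a continuous version of the (weak) derivative. The cleanest route is to prove that the partial-sum gradients form a Cauchy sequence in a space of Hölder-continuous functions (again via Kolmogorov applied to the Gaussian tail of the increments of the tail sums, using a variogram bound of the same flavor as \eqref{Eq:conv-t}), so that the limit is automatically continuous and equals $\nabla_{\mathbb{S}^2}u(t,\cdot)$; handling the two coordinate components of $\nabla_{\mathbb{S}^2}$ uniformly near the poles where $(\sin\vartheta)^{-1}$ is singular requires a little care, but this is a coordinate artifact that disappears once one works with the intrinsic gradient norm on $\mathbb{S}^2$.
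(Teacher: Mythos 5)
Your proposal follows essentially the same route as the paper's proof: for fixed $t$ the paper computes $\mathbb{E}|\nabla_{\mathbb{S}^2}u(t)|^2=\sum_{\ell\geq 0}\ell(\ell+1)\frac{2\ell+1}{4\pi}U_\ell(t,t)$ exactly as you propose, feeds in the bound $C_\ell\sigma_\ell^2(t)\approx(\ell+\frac12)^{-\alpha-4H}$ from \eqref{ineq:I2}, and for the time regularity writes $\int_0^T|D_tu|^2\,dt\le\int_0^T|\Delta_{\mathbb{S}^2}u|^2\,dt+\int_0^T|D_tW^H|^2\,dt$ and bounds each piece, which is your drift-plus-noise decomposition of $D_t u_{\ell m}$ in disguise. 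Two remarks. First, a bookkeeping slip: after summing over $m$ via \eqref{SumYlm^2}, the $\ell$-th term of the gradient series carries the multiplicity $2\ell+1$ \emph{in addition to} the factor $\ell(\ell+1)$ coming from $P_\ell'(1)$, so it behaves like $(\ell+\frac12)^{3-\alpha-4H}$ rather than $(\ell+\frac12)^{2-\alpha-4H}$ (and like $(\ell+\frac12)^{5-\alpha-4H}$ for the second derivatives, not $(\ell+\frac12)^{4-\alpha-4H}$); the thresholds $\alpha+4H>4$ and $\alpha+4H>6$ you state are the correct ones and match the corrected exponents, but the exponents as written would yield the weaker (and false) thresholds $\alpha+4H>3$ and $\alpha+4H>5$. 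Second, the issue you flag --- passing from finiteness of $\mathbb{E}|\nabla_{\mathbb{S}^2}u(t)|^2$ to the almost-sure statement $u(t,\cdot)\in C^1(\mathbb{S}^2)$ --- is genuine: the paper simply asserts that the moment bound suffices and does not carry out any upgrade. Your proposed Kolmogorov/entropy argument applied to the formally differentiated series (showing the partial-sum gradients are Cauchy in a H\"older space, with care near the poles) is a legitimate way to close this and is more careful than what is printed; it adds work the paper omits but does not change the underlying spectral computation.
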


\begin{proof}
For part (i), we only need to prove that $\mathbb{E}|\nabla _{\mathbb{S}^{2}}u(t)|^{2}<\infty $. Similar to the 
argument for obtaining $\mathbb{E}|\nabla _{{\mathbb{S}}^{2}}W^{H}(t,x)|^{2}$ 
in the proof of Proposition \ref{Prop: W^H},  we have 
\begin{equation*}
\mathbb{E}|\nabla _{\mathbb{S}^{2}}u|^{2}
=\sum_{\ell \geq 0}\ell (\ell +1)\frac{2\ell +1}{4\pi }U_{\ell }(t,t)
\end{equation*}%
in view of Lemma \ref{Lem:convergence}. Now recall (\ref{def:Ul}) and (\ref{ineq:I2}), we have 
\begin{equation*}
\begin{split}
\mathbb{E}|\nabla _{\mathbb{S}^{2}}u|^{2} 
&=\sum_{\ell \geq 0}\ell (\ell +1)\frac{2\ell +1}{4\pi }e^{-\ell (\ell +1)(t+s)}D_{\ell } \\
&\qquad +\sum_{\ell \geq 0}\ell (\ell +1)\frac{2\ell +1}{4\pi }C_{\ell }
\mathbb{E}\bigg\vert \int_{0}^{t}e^{-\ell (\ell +1)(t-s)}d\beta _{\ell m}(s)\bigg \vert ^{2} \\
&\leq c_{3,5}\bigg[ t^{\beta /2-1}+\sum_{\ell =0}^{\infty }(\ell +\frac{1}{2})^{3-\alpha -4H}\bigg]  \\
&\leq c_{3,6},
\end{split}
\end{equation*}%
where the last inequality is true because $\alpha +4H>4$. The constants $c_{3,5}$ and $c_{3,6}$ are finite and
positive depending only on $T,$ $\alpha ,\ \beta $ and $H.$  

For part (ii), notice that by eq.(\ref{def:heat eq}) we have
\begin{equation*}
\int_{0}^{T}|D_{t}u|^{2}dt
\leq \int_{0}^{T}|\Delta _{\mathbb{S}^{2}}u|^{2}dt
+\int_{0}^{T}|D_{t}W^{H}|^{2}dt.
\end{equation*}%
Since 
\begin{equation*}
\Delta _{\mathbb{S}^{2}}u=-\sum_{\ell =0}^{\infty }\ell (\ell +1)u_{\ell }
\end{equation*}%
in view of Lemma \ref{Lem:convergence} and the fact that $\alpha +4H>6$, we have 
\begin{equation}\label{Lap-u}
\begin{split}
\mathbb{E}\int_{0}^{T}|\Delta _{\mathbb{S}^{2}}u|^{2}dt 
&=\sum_{\ell =0}^{\infty }\ell ^{2}(\ell +1)^{2}\frac{2\ell +1}{4\pi }
e^{-\ell (\ell +1)(t+s)}D_{\ell }    \\
&\quad +\sum_{\ell \geq 0}\ell ^{2}(\ell +1)^{2}\frac{2\ell +1}{4\pi }C_{\ell }
\mathbb{E}\bigg \vert \int_{0}^{t}e^{-\ell (\ell +1)(t-s)}d\beta _{\ell m}(s)\bigg\vert ^{2}    \\
&\leq c_{3,7}\bigg[ t^{\beta /2-1}+\sum_{\ell =0}^{\infty }(\ell +\frac{1}{2})^{5-\alpha -4H}\bigg]   \\
&\leq c_{3,8},  
\end{split}
\end{equation}
for  some positive constants $c_{3,7}$ and $c_{3,8}$ depending only on $T,$ $\alpha ,\beta $ and $H$. Thus, we
have  proved the first conclusion in part (ii) as well as the second one since 
\begin{equation*}
\mathbb{E}  \int_{0}^{T}|D_{t}u|^{2}dt   <\infty ,
\end{equation*}%
in view of (\ref{Lap-u}) and Proposition \ref{Prop: W^H}. Hence the proof is
completed. 
\end{proof}

\section{Some Technical Tools \label{Sec: TechnicalTools}}

In this section, we study the variogram and strong local nondeterminism of the solution (\ref{mild-form}) . 
These properties are the key for investigating the exact modulus of continuity for the solution $u(t,x)$.

\subsection{Estimation of the Variogram}

%We shall first focus on the variogram and make a statement that,

\begin{proposition}
\label{Prop:Variogram} 
Assume that $1/2<H<1$ and  Conditions \textbf{(A.1)} and \textbf{(A.2)} hold. Then the solution $u(t,x)$ 
defined in \eqref{mild-form} satisfies the following condition: 
There exist constants $K_{4,1}>0$ and $0<\delta <1,$ such that for any $|t-s|<\delta $, $\theta 
=d_{\mathbb{S}^{2}}(x,y)<\delta $, 
\begin{equation}\label{Eq:vaio}
\mathbb{E}|u\left( t,x\right) -u\left( s,y\right) |^{2}
\leq K_{4,1} \big[|t-s|^{2\eta }+ \rho^{2} _{\gamma }(\theta) \big],
\end{equation}%
where $\eta =H-\max\{(2-\alpha)/4,0\}$, $\gamma = \alpha/2-1+2H$, and the function
$\rho _{\gamma }:\mathbb{R}_{+}/\{0\}\rightarrow\mathbb{R}_{+}$ is defined as follows:
\begin{equation}
\rho _{\gamma }(s)=\left\{
\begin{array}{cc}
s^{\gamma }, & \hbox{ if } \gamma <1, \\
s\sqrt{|\ln s|}, & \hbox{ if } \gamma =1, \\
s, & \hbox{ if } \gamma >1.
\end{array}%
\right.
\label{def:rho}
\end{equation}
\end{proposition}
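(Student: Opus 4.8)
The plan is to bound the increment by separating its temporal and spatial parts and to treat each through the spectral representation (\ref{def:Solution}). By the elementary inequality $\mathbb{E}|u(t,x)-u(s,y)|^2 \le 2\,\mathbb{E}|u(t,x)-u(s,x)|^2 + 2\,\mathbb{E}|u(s,x)-u(s,y)|^2$, it suffices to prove $\mathbb{E}|u(t,x)-u(s,x)|^2 \le K\,|t-s|^{2\eta}$ and $\mathbb{E}|u(s,x)-u(s,y)|^2 \le K\,\rho_\gamma^2(\theta)$ for $|t-s|,\theta<\delta$. Using the covariance structure (\ref{Cov:ulm}) together with the addition formula (\ref{SumYlm^2}), both quantities collapse to single sums over $\ell$: the temporal increment equals $\sum_{\ell\ge0}\frac{2\ell+1}{4\pi}\,\mathbb{E}|u_{\ell m}(t)-u_{\ell m}(s)|^2$, while the spatial increment equals $\sum_{\ell\ge0}\frac{2\ell+1}{2\pi}\,U_\ell(s,s)\,\bigl(1-P_\ell(\cos\theta)\bigr)$ with $\cos\theta=\langle x,y\rangle$. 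In each sum I would split the summand, via (\ref{def:Ul}) and the independence of $u_0$ and $W^H$, into the contribution of the initial datum (carrying $D_\ell$) and that of the fractional noise (carrying $C_\ell$).

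For the temporal increment the noise contribution is the heart of the matter. Writing $X_\ell(t)=\int_0^t e^{-\ell(\ell+1)(t-\lambda)}d\beta_{\ell m}(\lambda)$ as in (\ref{coef-u(t)}) and applying the Fourier identity (\ref{Fourier transform})--(\ref{Fourier-g_l}) to $g_\ell(t,\cdot)-g_\ell(s,\cdot)$, I would express $\mathbb{E}|X_\ell(t)-X_\ell(s)|^2$ as $c_H\int_{\mathbb R}\bigl|\widehat{g_\ell}(t,\tau)-\widehat{g_\ell}(s,\tau)\bigr|^2\,|\tau|^{1-2H}d\tau$, whose numerator factors through $(e^{-it\tau}-e^{-is\tau})-(e^{-\ell(\ell+1)t}-e^{-\ell(\ell+1)s})$. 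Bounding $|e^{-it\tau}-e^{-is\tau}|^2\le\min(|t-s|^2\tau^2,4)$ and $(e^{-at}-e^{-as})^2\le\min((a|t-s|)^2,1)$, then splitting the $\tau$-integral at $|\tau|=|t-s|^{-1}$ and invoking (\ref{int-tau}) for the second piece, I expect the uniform estimate $\mathbb{E}|X_\ell(t)-X_\ell(s)|^2\le C\min\bigl(|t-s|^{2H},[\ell(\ell+1)]^{-2H}\bigr)$. Inserting this and $C_\ell\approx(\ell+1/2)^{-\alpha}$ and splitting the $\ell$-sum at $\ell\approx|t-s|^{-1/2}$, the low-frequency part yields $|t-s|^{2H}$ when $\alpha>2$ and $|t-s|^{2H-(2-\alpha)/2}$ when $\alpha<2$, while the high-frequency part always yields $|t-s|^{(\alpha+4H-2)/2}$; taking the dominant term reproduces exactly $2\eta=2H-2\max\{(2-\alpha)/4,0\}$. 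The initial-datum part $\sum_\ell\frac{2\ell+1}{4\pi}D_\ell(e^{-at}-e^{-as})^2$ is then controlled through $(e^{-at}-e^{-as})^2\le(a|t-s|)^{2\eta}$ and $D_\ell\lesssim(\ell+1/2)^{-\beta}$, producing a further $|t-s|^{2\eta}$ term, the summability being guaranteed by the decay of $\beta$ (precisely in the regime, $\beta>4H+2$ or $u_0\equiv0$, in which part (b) of Theorem \ref{Th:Main} is applied).

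For the spatial increment the essential input is the Legendre estimate $0\le 1-P_\ell(\cos\theta)\le C\min\bigl((\ell\theta)^2,1\bigr)$, which follows from $P_\ell(1)=1$, $P_\ell'(1)=\ell(\ell+1)/2$ and $|P_\ell|\le1$. Combining it with $\frac{2\ell+1}{2\pi}U_\ell(s,s)\le C\bigl((\ell+1/2)^{1-\beta}+(\ell+1/2)^{1-\alpha-4H}\bigr)$, obtained from (\ref{def:Ul}), (\ref{int-tau}) and the estimate already used in (\ref{ineq:I2}), I would split $\sum_\ell(\cdots)\min((\ell\theta)^2,1)$ at $\ell\approx\theta^{-1}$. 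For the noise part the relevant decay exponent is $\alpha+4H-1$, and since $2\gamma=\alpha+4H-2$, the low- and high-frequency pieces combine into $\theta^{2\gamma}$ when $\gamma<1$, $\theta^2|\ln\theta|$ when $\gamma=1$, and $\theta^2$ when $\gamma>1$, which is exactly $\rho_\gamma^2(\theta)$ by (\ref{def:rho}). The initial-datum part has the faster exponent $\beta-1>3$, hence contributes only $\theta^2\le\rho_\gamma^2(\theta)$ and is absorbed.

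The step I expect to be the main obstacle is the temporal noise estimate: deriving the two-sided control $\mathbb{E}|X_\ell(t)-X_\ell(s)|^2\approx\min(|t-s|^{2H},[\ell(\ell+1)]^{-2H})$ uniformly in $\ell$ from the oscillatory Fourier integral is delicate, because of the interplay between the oscillation scale $|\tau|\sim|t-s|^{-1}$ and the spectral scale $\ell(\ell+1)$, and it is the subsequent threshold summation that forces the truncation $\max\{(2-\alpha)/4,0\}$ appearing in $\eta$. By comparison the spatial estimate is more routine, though the borderline case $\gamma=1$, which produces the logarithmic factor in $\rho_\gamma$, must be read off carefully from the summation at $\ell\approx\theta^{-1}$.
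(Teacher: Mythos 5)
Your proposal is correct in substance and follows the same overall strategy as the paper: spectral decomposition via (\ref{Cov:ulm}) and (\ref{SumYlm^2}), separation of the initial-datum ($D_\ell$) and noise ($C_\ell$) contributions, the Fourier identity (\ref{Fourier transform})--(\ref{int-tau}) for the stochastic convolution, and Legendre estimates for the spatial part. The paper does not pre-split by the triangle inequality but expands the full variogram into $D_\ell A_{\ell,1}+C_\ell A_{\ell,2}$ (Lemmas \ref{Lem:A_l1} and \ref{Lem:A_l2}), each of which then separates into exactly your temporal piece and your $\bigl(1-P_\ell\bigr)$-weighted spatial piece, so that difference is cosmetic; your spatial computation via $1-P_\ell(\cos\theta)\le C\min\{(\ell\theta)^2,1\}$ and a cut at $\ell\approx\theta^{-1}$ is precisely what the paper outsources to Lemma 10 of \cite{LanMarXiao}. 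Where you genuinely diverge is the temporal noise term: you establish the uniform bound $\mathbb{E}|X_\ell(t)-X_\ell(s)|^2\le C\min\{|t-s|^{2H},[\ell(\ell+1)]^{-2H}\}$ and then sum with a threshold at $\ell\approx|t-s|^{-1/2}$, whereas the paper splits $\widehat{g_\ell}(t,\tau)-\widehat{g_\ell}(s,\tau)$ into the $e^{-\ell(\ell+1)\cdot}$ part (handled by the double integral in (\ref{eq2})) and the $e^{-i\cdot\tau}$ part (handled in (\ref{eq3})). Your route is arguably cleaner and in fact repairs a defect: as written, (\ref{eq3}) bounds $[\ell(\ell+1)]^2+\tau^2$ below by $[\ell(\ell+1)]^2$ and is left with $\int_{\mathbb R}|e^{-it\tau}-e^{-is\tau}|^2|\tau|^{1-2H}d\tau$, which diverges at infinity for $H<1$; the correct treatment is exactly your $\min$ of the two regimes. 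Two caveats. First, at the borderline $\alpha=2$ your low-frequency sum $|t-s|^{2H}\sum_{\ell\le|t-s|^{-1/2}}\ell^{-1}$ produces an extra factor $|\log|t-s||$, and since your $\min$ bound is two-sided this logarithm is genuinely present in the variogram, so the clean bound $K|t-s|^{2\eta}$ should be asserted only for $\alpha\neq 2$ (the paper's argument is also not sound at this single value). Second, your initial-datum temporal estimate via $(e^{-at}-e^{-as})^2\le(a|t-s|)^{2\eta}$ needs $\beta>2+4\eta$, which you correctly tie to the hypotheses $u_0\equiv 0$ or $\beta>4H+2$ under which the proposition is actually invoked; the paper's Lemma \ref{Lem:A_l1} instead yields $(t-s)^{2\wedge(\beta/2-1)}$, which likewise fails to reduce to $(t-s)^{2\eta}$ when $4<\beta<2+4\eta$, so neither argument covers the full range of Condition \textbf{(A.2)} and your explicit restriction is the more honest statement.
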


\begin{proof}
For any $( t,x ) ,( s,y ) \in \mathbb{T}\times {\mathbb{S}}^{2}$, we have
\begin{equation}\label{vario1}
\begin{split}
&\mathbb{E}|u\left( t,x\right) -u\left( s,y\right) |^{2}  \\   
&=\mathbb{E}\bigg\vert
\sum_{\ell =0}^{\infty }\sum_{m=-\ell }^{\ell }
u_{\ell m}(t)Y_{\ell m}(x)-u_{\ell m}(s)Y_{\ell m}(y)
\bigg\vert ^{2}                               \\
&=\sum_{\ell =1}^{\infty }\frac{2\ell +1}{4\pi }
\big[
U_{\ell }(t,t)+U_{\ell }(s,s)
-2U_{\ell }(t,s)P_{\ell }(\left\langle x,y\right\rangle )
\big]
\end{split}
\end{equation}
in view of (\ref{Cov:ulm}) and (\ref{SumYlm^2}). Recall (\ref{def:Ul}), we obtain
\begin{equation}
U_{\ell }(t,t)+U_{\ell }(s,s)
-2U_{\ell }(t,s)P_{\ell }(\left\langle x,y\right\rangle )
=D_{\ell }A_{\ell ,1}+C_{\ell }A_{\ell ,2},
\label{vario2}
\end{equation}%
where
\begin{equation*}
A_{\ell ,1}
=:e^{-2\ell (\ell +1)t}+e^{-2\ell (\ell +1)s}
-2e^{-\ell (\ell +1)(t+s)}P_{\ell }(\left\langle x,y\right\rangle ),
\end{equation*}%
and
\[
\begin{split}
A_{\ell ,2}
&=:\sum_{\mu =t,s}\int_{0}^{\mu }\int_{0}^{\mu }
e^{-\ell (\ell +1)(2\mu -\lambda -\xi )}
|\lambda -\xi |^{2H-2}d\xi d\lambda
 \\
&\qquad-2P_{\ell }(\left\langle x,y\right\rangle )
\int_{0}^{t}\int_{0}^{s}e^{-\ell (\ell +1)(t+s-\lambda -\xi )}
|\lambda -\xi|^{2H-2}d\xi d\lambda .
\end{split}%
\]
Hence  \eqref{Eq:vaio} follows from Lemmas \ref{Lem:A_l1} and \ref{Lem:A_l2}
below.
\end{proof}

\begin{lemma}
\label{Lem:A_l1} There exists a constant $K_{4,2}>1$ such that for any $0<t-s,\ \theta <\delta $, we have
\begin{equation*}
\sum_{\ell =0}^{\infty }\frac{2\ell +1}{4\pi }D_{\ell }A_{\ell ,1}
\leq K_{4,2}\left\{
(t-s)^{2\wedge (\beta /2-1)}
+\left[ 1+(t+s)^{\beta /2-2}\right]\theta ^{2}
\right\} .
\end{equation*}
\end{lemma}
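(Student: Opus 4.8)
The plan is to split $A_{\ell ,1}$ into a term that feels only the time increment $t-s$ and a term that feels only the spatial increment. Writing $\mu_\ell=\ell(\ell+1)$ and using $\langle x,y\rangle=\cos\theta$, the first step is to record the algebraic identity
\begin{equation*}
A_{\ell ,1}=\big(e^{-\mu_\ell t}-e^{-\mu_\ell s}\big)^2+2e^{-\mu_\ell(t+s)}\big(1-P_\ell(\cos\theta)\big),
\end{equation*}
which I would verify by expanding the square and invoking $P_\ell(1)=1$. Multiplying by $\tfrac{2\ell+1}{4\pi}D_\ell$ and summing then gives $\sum_\ell \tfrac{2\ell+1}{4\pi}D_\ell A_{\ell ,1}=S_1+S_2$, where $S_1$ and $S_2$ collect the two terms respectively; the $\ell=0$ summand vanishes since $\mu_0=0$ and $P_0\equiv1$. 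Under \textbf{(A.2)} I will use $D_\ell\le D_0(\ell+1/2)^{-\beta}$ with $\beta>4$, together with $\tfrac{2\ell+1}{4\pi}\approx \ell$ and $\mu_\ell\approx \ell^2$.

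For the temporal sum $S_1$, the key elementary estimate is $|e^{-\mu_\ell t}-e^{-\mu_\ell s}|\le \min(1,\mu_\ell|t-s|)$, whence $(e^{-\mu_\ell t}-e^{-\mu_\ell s})^2\le \min(1,\mu_\ell^2(t-s)^2)$; this is $s$-free, so no extra $(t+s)$ factor is produced. It yields $S_1\lesssim \sum_{\ell\ge1}\ell^{1-\beta}\min(1,\ell^4(t-s)^2)$. I would then split the series at the critical index $\ell_0\approx(t-s)^{-1/2}$: for $\ell\le\ell_0$ the minimum equals $\ell^4(t-s)^2$, contributing $(t-s)^2\sum_{\ell\le\ell_0}\ell^{5-\beta}$, while for $\ell>\ell_0$ the minimum equals $1$, contributing $\sum_{\ell>\ell_0}\ell^{1-\beta}\approx(t-s)^{\beta/2-1}$. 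Evaluating the head sum requires distinguishing $\beta>6$ (convergent, giving $(t-s)^2$) from $4<\beta<6$ (where $5-\beta>-1$ gives $\ell_0^{6-\beta}$, hence $(t-s)^{\beta/2-1}$); in both regimes the total is $(t-s)^{2\wedge(\beta/2-1)}$, with the borderline $\beta=6$ producing only a harmless extra logarithmic factor.

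For the spatial sum $S_2$, I would invoke the Legendre gradient estimate $0\le 1-P_\ell(\cos\theta)\le \tfrac14\mu_\ell\theta^2$, which follows from $1-\cos\theta\le\theta^2/2$ and the classical bound $|P_\ell'(r)|\le P_\ell'(1)=\mu_\ell/2$ on $[-1,1]$. This gives $S_2\lesssim \theta^2\sum_{\ell\ge1}\ell^{3-\beta}e^{-\mu_\ell(t+s)}$. Splitting at $\ell_*\approx(t+s)^{-1/2}$, the low modes (bounding $e^{-\mu_\ell(t+s)}\le1$) contribute $\theta^2\sum_{\ell\le\ell_*}\ell^{3-\beta}=O(\theta^2)$, convergent since $\beta>4$, and the high modes contribute $\theta^2\sum_{\ell>\ell_*}\ell^{3-\beta}\approx \ell_*^{4-\beta}\theta^2=(t+s)^{\beta/2-2}\theta^2$. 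Together these give the stated $\big[1+(t+s)^{\beta/2-2}\big]\theta^2$. Combining the bounds for $S_1$ and $S_2$ and choosing $K_{4,2}$ to absorb all constants completes the argument.

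The main obstacle is extracting the sharp exponent $2\wedge(\beta/2-1)$ in $S_1$: one must perform the threshold split at $\ell\approx(t-s)^{-1/2}$ and carry out the case analysis $\beta\gtrless6$, since the head sum $\sum_{\ell\le\ell_0}\ell^{5-\beta}$ switches between convergent and divergent behavior precisely at $\beta=6$. The decomposition of $A_{\ell ,1}$ and the Legendre derivative estimate are the two enabling ingredients; once they are in place, the remainder is careful bookkeeping of the truncated power sums.
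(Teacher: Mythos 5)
Your decomposition of $A_{\ell,1}$ into the squared temporal increment $\big(e^{-\ell(\ell+1)t}-e^{-\ell(\ell+1)s}\big)^2$ plus $2e^{-\ell(\ell+1)(t+s)}\big(1-P_\ell(\cos\theta)\big)$ is exactly the paper's, and your handling of the temporal sum $S_1$ is equivalent in substance: you use $\min\{1,\ell^2(\ell+1)^2(t-s)^2\}$ with a threshold split at $\ell_0\approx(t-s)^{-1/2}$, while the paper writes the square as $\ell^2(\ell+1)^2\int_s^t\int_s^t e^{-\ell(\ell+1)(w+v)}\,dw\,dv$ and compares the $\ell$-sum to $\int_1^\infty x^{5-\beta}e^{-x^2(w+v)}dx$; both routes yield the exponent $2\wedge(\beta/2-1)$ through the same case split at $\beta=6$. (The logarithmic loss you flag at the borderline $\beta=6$ is equally present in the paper's argument, where the comparison integral is itself logarithmic; neither proof is sharp at that single value, and the lemma is only applied for $\beta$ in open ranges anyway.) Where you genuinely diverge is the spatial sum $S_2$: the paper invokes Hilb's asymptotics for $P_\ell(\cos\theta)$ — the Bessel function $J_0$, its error terms, and incomplete Gamma function estimates with a split at $\ell\approx\theta^{-1}$ — whereas you use the elementary bound $0\le 1-P_\ell(\cos\theta)\le\tfrac14\ell(\ell+1)\theta^2$ coming from $\max_{[-1,1]}|P_\ell'|=P_\ell'(1)$ and $1-\cos\theta\le\theta^2/2$. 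Since Condition \textbf{(A.2)} gives $\beta>4$, the resulting series $\sum_\ell\ell^{3-\beta}$ converges, so your route delivers $S_2=O(\theta^2)$ essentially immediately (the factor $(t+s)^{\beta/2-2}$ is bounded on $\mathbb{T}$ for $\beta>4$, so it costs nothing), which is at least as strong as the stated bound and considerably shorter. The paper's heavier machinery would matter only if one needed the $\theta^2$ bound for smaller $\beta$, where the crude derivative estimate becomes too lossy; under the standing hypotheses your argument is complete and correct.
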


\begin{proof}
Let $\left\langle x,y\right\rangle =\cos \theta $ where $\theta \in \left[0,\pi \right] $ is the geodesic distance 
between $x$ and $y$ on $\mathbb{S}^{2}$, then we have
\begin{equation} \label{def:A_l1}
\begin{split}
A_{\ell ,1}
&=\left[ e^{-\ell (\ell +1)t}-e^{-\ell (\ell +1)s}\right] ^{2}
+2e^{-\ell (\ell +1)(t+s)}\left[ 1-P_{\ell }(\cos \theta )\right]   \\
&=:\widetilde{A}_{\ell ,1}^{0}+\widetilde{A}_{\ell ,1}^{1}.
\end{split}
\end{equation}%
Notice that
\begin{eqnarray}
\label{ineq:A_l11}
\sum_{\ell =0}^{\infty }\frac{2\ell +1}{4\pi }
D_{\ell }\widetilde{A}_{\ell ,1}^{0}
&&\leq \int_{s}^{t}\int_{s}^{t}\left[
\int_{1}^{\infty }x^{5-\beta }e^{-x^{2}(w+v)}dx
\right] dwdv  \notag \\
&&\leq \int_{s}^{t}\int_{s}^{t}(w+v)^{\beta /2-3}\, dwdv\\
&&\leq \left\{
\begin{array}{ll}
(2t)^{\beta /2-3}(t-s)^{2}, & \hbox{if }\, \beta \geq 6, \\
\frac 2 {\beta - 4}(t-s)^{\beta /2-1}, & \hbox{if }\, 4<\beta <6,
\end{array}%
\right. \notag
\end{eqnarray}
%where $c_{4,1}$ is a positive constant depending on $\beta$. 
In deriving the last inequality, we have used the elementary fact that, 
if  $4<\beta <6$, then $0 < \frac \beta 2 - 2 < 1$ and, consequently,  
$b^{\beta/2-2 }-a^{\beta/ 2 - 2}\leq (b-a)^{ \beta/2 - 2}$ for all
$0<a<b<\infty$. 

%where $c_{4,2} =1$ if $0<\gamma \le1$ and $c_{4,2} =2^{\gamma -1}$ if $\gamma >1$.

Now let us focus on $\widetilde{A}_{\ell ,1}^{1}$. Recall the following
Hilb's asymptotics (see \cite{szego}, page 195, Theorem 8.21.6 or
\cite{GradRyzh} 8.722): there exists a constant $c_{4,1}>0,$ such that,
uniformly for all $\theta \in (0,\pi ),$
\begin{equation*}
P_{\ell }(\cos \theta )
=\left\{ \frac{\theta }{\sin \theta }\right\}^{1/2}
J_{0}\Big((\ell +\frac{1}{2})\theta \Big)
+\delta _{\ell }(\theta ),
\end{equation*}%
where
\begin{equation*}
\delta _{\ell }(\theta )
<< \left\{
\begin{array}{ll}
\theta ^{2}O(1) & \hbox{ for } 0<\theta <c_{4,1}\ell ^{-1},  \\
\theta ^{1/2}O(\ell ^{-3/2}) & \hbox{ for } \theta >c_{4,1}\ell ^{-1}
\end{array}%
\right. 
\end{equation*}%
and $J_{0}$ is the\emph{\ Bessel function} defined as
\begin{equation*}
J_{0}(x)=\sum_{k=0}^{\infty}\frac{(-1)^{k}}{(k!)^{2}}\Big(\frac{x}{2}\Big)^{2k}
\end{equation*}%
%around $x=0$ 
(c.f. \cite{GradRyzh}, 8.402), which yields that
\begin{equation*}
\lim_{u\rightarrow 0}
\frac{1-J_{0}(c_{4,1}u) }{c_{4,1}^{2}u^{2}}
=\frac{1}{2}.
\end{equation*}%
Thus, by using the fact that
\begin{equation*}
\frac{\theta }{\sin \theta }-1
=\frac{\theta ^{2}}{6}+O(\theta ^{3})
\hbox{, \ as } \ \theta \rightarrow 0,
\end{equation*}%
we obtain that for any positive integer $L<c_{4,1}\theta ^{-1}$,
\begin{equation}
\begin{split}
&\sum_{\ell =1}^{L}\frac{2\ell +1}{4\pi }D_{\ell }
e^{-\ell (\ell +1)(t+s)}\big\{ 1-P_{\ell }(\cos \theta )\big\} \\
&\leq D_{0}\sum_{\ell =1}^{L}(\ell +\frac{1}{2})^{1-\beta }
e^{-\ell (\ell +1)(t+s)}
\bigg( \frac{\ell ^{2}}{c_{4,1}^{2}}-\frac{1}{6}\bigg) \theta ^{2}\\
&\leq c_{4,2}e^{(t+s)/4}(t+s)^{\beta /2-2}
\cdot C(t+s,\theta )\theta ^{2},
\end{split}
\end{equation}
where $c_{4,2}$ is a positive constant depending on $D_{0},$ $c_{4,1}$ and $\beta $, and
\[
\begin{split}
C(t+s,\theta )
&=\left[ \Gamma (2-\beta /2,t+s) -\Gamma (2-\beta/2,c_{A}^{2}(t+s)\theta ^{-2}) \right] \\
&\leq C(1+(t+s)^{2-\beta /2})e^{-(t+s)},
\end{split}%
\]
with $\Gamma (a,x)$ the incomplete Gamma function defined as
\begin{equation*}
\Gamma (a,x)=\int_{x}^{\infty }e^{-u}u^{a-1}du
\end{equation*}%
(c.f. \cite{GradRyzh} 3.381 and 8.350). On the other hand, recall that $1-P_{\ell }(\cos \theta )\leq 2$ 
uniformly for all $\theta $, whence we have, for any $U>\frac{c_{4,1}}{\theta }$,
\[
\begin{split}
&\sum_{\ell =U}^{\infty }\frac{2\ell +1}{4\pi }D_{\ell }
e^{-\ell (\ell +1)(t+s)} \left\{ 1-P_{\ell }(\cos \theta )\right\} \\
&\leq  D_{0}\int_{U}^{+\infty }
(\ell +\frac{1}{2})^{1-\beta }e^{-\ell (\ell +1)(t+s)}d\ell \\
&\leq D_{0}e^{(t+s)/4}(t+s)^{\beta /2-1}
\int_{c_{4,1}^{2}(t+s)/\theta^{2}}^{\infty }e^{-u}u^{-\beta /2}du \\
&\leq D_{0}e^{(t+s)/4}(t+s)^{\beta /2-2}
\Gamma (2-\beta/2,c_{A}^{2}(t+s)\theta ^{-2})\cdot \theta ^{2}.
\end{split}%
\]
Thus,
\begin{equation}
\sum_{\ell =0}^{\infty }\frac{2\ell +1}{4\pi }
D_{\ell }e^{-\ell (\ell +1)(t+s)}
\left\{ 1-P_{\ell }(\cos \theta )\right\}
\leq c_{4,3}\left[1+(t+s)^{\beta /2-2}\right] \theta ^{2}  \label{ineq:A_l12}
\end{equation}%
for some constant $c_{4,3}>0$ depending on $D_{0}$. Lemma (\ref{Lem:A_l1})
is then obtained by (\ref{def:A_l1}), (\ref{ineq:A_l11}) and (\ref{ineq:A_l12}),
\end{proof}

Now let us focus on $A_{\ell ,2}$.

\begin{lemma}
\label{Lem:A_l2} There exists a constant $K_{4,3}>1$ such that for any $0<t-s,\ \theta <\delta $, 
we have
\begin{equation*}
\sum_{\ell =0}^{\infty }\frac{2\ell +1}{4\pi }C_{\ell }A_{\ell ,2}
\leq K_{4,3}\{(t-s)^{2\eta}+\rho^{2} _{\gamma }(\theta)\},
\end{equation*}
with $\eta$ and $\rho_{\gamma }$ defined in Proposition \ref{Prop:Variogram}.
\end{lemma}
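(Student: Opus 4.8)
The plan is to separate the temporal and spatial increments inside $A_{\ell,2}$ and estimate each after summation against the weights $\tfrac{2\ell+1}{4\pi}C_\ell\approx(\ell+1/2)^{1-\alpha}$. Set $Z_{\ell m}(t)=\int_0^t e^{-\ell(\ell+1)(t-\lambda)}\,d\beta_{\ell m}(\lambda)$, so that $\sigma_\ell^2(t)=\mathbb{E}|Z_{\ell m}(t)|^2$ and the mixed double integral appearing in $A_{\ell,2}$ equals $\mathbb{E}\big[Z_{\ell m}(t)\overline{Z_{\ell m}}(s)\big]$, which is real and nonnegative. Adding and subtracting this quantity I would write
\begin{equation*}
A_{\ell,2}=\mathbb{E}\big|Z_{\ell m}(t)-Z_{\ell m}(s)\big|^{2}+2\big(1-P_{\ell}(\cos\theta)\big)\,\mathbb{E}\big[Z_{\ell m}(t)\overline{Z_{\ell m}}(s)\big]=:A_{\ell,2}^{0}+A_{\ell,2}^{1}.
\end{equation*}
The term $A_{\ell,2}^{0}$ will generate the time bound $(t-s)^{2\eta}$ and $A_{\ell,2}^{1}$ the spatial bound $\rho_\gamma^2(\theta)$.

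For the temporal term the key pointwise estimate is
\begin{equation*}
A_{\ell,2}^{0}\le C\,\min\big\{(t-s)^{2H},\,[\ell(\ell+1)]^{-2H}\big\}.
\end{equation*}
I would prove it from the identity $Z_{\ell m}(t)-Z_{\ell m}(s)=\int_s^t e^{-\ell(\ell+1)(t-\lambda)}d\beta_{\ell m}(\lambda)+\big(e^{-\ell(\ell+1)(t-s)}-1\big)Z_{\ell m}(s)$. The first summand has variance bounded by $\int_s^t\int_s^t|\lambda-\xi|^{2H-2}d\lambda\,d\xi\approx(t-s)^{2H}$ and, after the substitution $\lambda\mapsto t-\lambda$ and enlarging the domain to $\mathbb{R}_+^2$, also by $C[\ell(\ell+1)]^{-2H}$ via the scaling $\int_0^\infty\!\int_0^\infty e^{-a(u+v)}|u-v|^{2H-2}du\,dv=C a^{-2H}$; the second summand is controlled by $(1-e^{-a})^2\le\min\{1,a^2\}$ together with $\sigma_\ell^2(s)\le C[\ell(\ell+1)]^{-2H}$, the latter already available from the Fourier representation \eqref{Fourier transform} and \eqref{int-tau}. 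Summing $(\ell+1/2)^{1-\alpha}\min\{(t-s)^{2H},[\ell(\ell+1)]^{-2H}\}$ and splitting the series at $\ell\approx(t-s)^{-1/2}$ then gives the result: for $\alpha>2$ the series $\sum_\ell(\ell+1/2)^{1-\alpha}$ converges and the crude bound $(t-s)^{2H}=(t-s)^{2\eta}$ suffices, while for $0<\alpha<2$ the low-frequency block contributes $(t-s)^{2H}\cdot(t-s)^{(\alpha-2)/2}$ and the convergent high-frequency tail contributes the same order, both equal to $(t-s)^{2\eta}$ since $2\eta=(\alpha+4H-2)/2$ in this range.

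For the spatial term I would first use Cauchy--Schwarz and the same variance estimate to get $\mathbb{E}[Z_{\ell m}(t)\overline{Z_{\ell m}}(s)]\le\sigma_\ell(t)\sigma_\ell(s)\le C[\ell(\ell+1)]^{-2H}$, which reduces the sum to
\begin{equation*}
\sum_{\ell\ge1}\frac{2\ell+1}{4\pi}C_\ell\,A_{\ell,2}^{1}\le C\sum_{\ell\ge1}(\ell+1/2)^{1-\alpha-4H}\big(1-P_\ell(\cos\theta)\big).
\end{equation*}
This is handled exactly as the spatial term in Lemma \ref{Lem:A_l1}: invoking Hilb's asymptotics and $1-P_\ell(\cos\theta)\le C\min\{\ell^2\theta^2,1\}$, I split at $\ell\approx1/\theta$. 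The low-frequency block $\theta^2\sum_{\ell\le1/\theta}(\ell+1/2)^{3-\alpha-4H}$ yields $\theta^{\alpha+4H-2}=\theta^{2\gamma}$ when $\gamma<1$, the logarithmic factor $\theta^2|\ln\theta|$ when $\gamma=1$, and $\theta^2$ when $\gamma>1$; the tail $\sum_{\ell>1/\theta}(\ell+1/2)^{1-\alpha-4H}$, convergent because $\alpha+4H>2$, always contributes $\theta^{2\gamma}$. In each of the three regimes the total matches $\rho_\gamma^2(\theta)$ as defined in \eqref{def:rho}.

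The main obstacle is the temporal estimate, namely establishing the scale-sharp bound $A_{\ell,2}^{0}\approx\min\{(t-s)^{2H},[\ell(\ell+1)]^{-2H}\}$: it encodes how the Ornstein--Uhlenbeck smoothing of the high modes upgrades the temporal regularity and, after summation against $(\ell+1/2)^{1-\alpha}$, produces the $\alpha$-dependent exponent $\eta$. The borderline value $\alpha=2$ is delicate, since there the low-frequency block of the temporal sum carries an extra $|\ln(t-s)|$ factor, so the clean power bound $(t-s)^{2\eta}$ holds as stated only for $\alpha\neq2$ and the critical case has to be absorbed separately. I expect the spatial term to be routine given Lemma \ref{Lem:A_l1}, the only point of care being the bookkeeping of the three regimes of $\gamma$, where the $\gamma=1$ logarithm is exactly the one recorded in the definition of $\rho_\gamma$.
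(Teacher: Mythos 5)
Your decomposition of $A_{\ell,2}$ into a pure time-increment term plus a $(1-P_{\ell})$-weighted cross-covariance term is exactly the paper's splitting into $\widetilde{A}_{\ell,2}^{0}$ and $2\widetilde{A}_{\ell,2}^{1}$ in (\ref{def:A_L2}), and your handling of the spatial piece (Cauchy--Schwarz down to $[\ell(\ell+1)]^{-2H}$, then the three-regime summation of $(\ell+\frac12)^{1-\alpha-4H}(1-P_{\ell}(\cos\theta))$ split at $\ell\approx 1/\theta$) is the same computation that the paper obtains from the Fourier bound and then delegates to Lemma 10 of \cite{LanMarXiao}; your bookkeeping of the three regimes of $\gamma$ against (\ref{def:rho}) is correct. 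Where you genuinely diverge is the temporal piece: the paper works in the spectral domain via (\ref{Fourier transform})--(\ref{Fourier-g_l}), splitting $\widehat{g_{\ell}}(t,\tau)-\widehat{g_{\ell}}(s,\tau)$ into a deterministic-decay part and an oscillatory part (displays (\ref{eq2})--(\ref{eq3})), whereas you use the pathwise identity $Z_{\ell m}(t)-Z_{\ell m}(s)=\int_{s}^{t}e^{-\ell(\ell+1)(t-\lambda)}d\beta_{\ell m}(\lambda)+(e^{-\ell(\ell+1)(t-s)}-1)Z_{\ell m}(s)$ to get the pointwise bound $\min\{(t-s)^{2H},[\ell(\ell+1)]^{-2H}\}$ and then split the $\ell$-sum at $(t-s)^{-1/2}$. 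Your route is more elementary, avoids the Plancherel identity entirely for this term, and for $\alpha\neq 2$ produces the same exponent $2\eta$; it also makes transparent how the Ornstein--Uhlenbeck damping of the high modes creates the $\alpha$-dependence of $\eta$.

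The caveat you raise at $\alpha=2$ is not a deficiency of your method that a sharper pointwise estimate would cure. For every $\ell$ with $\ell(\ell+1)(t-s)\leq 1$ the exponential factor in $\int_{s}^{t}\int_{s}^{t}e^{-\ell(\ell+1)(2t-\lambda-\xi)}|\lambda-\xi|^{2H-2}d\xi d\lambda$ is bounded below by $e^{-2}$, so $\widetilde{A}_{\ell,2}^{0}\geq c(t-s)^{2H}$ uniformly over the whole low block (the correction term $(1-e^{-\ell(\ell+1)(t-s)})^{2}\sigma_{\ell}^{2}(s)\leq C[\ell(\ell+1)(t-s)]^{2-2H}(t-s)^{2H}$ is negligible there), and weighting by $\frac{2\ell+1}{4\pi}C_{\ell}\approx \ell^{-1}$ then forces a genuine factor $|\log(t-s)|$. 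So at $\alpha=2$ the clean bound $(t-s)^{2\eta}$ in the lemma actually fails, and the paper's own chain (\ref{eq3}) silently mishandles this boundary case: the $\tau$-integral appearing there is of order $(t-s)^{2H-2}$ rather than $(t-s)^{2H}$, and recovering the missing factor $(t-s)^{2}$ requires retaining the $\tau^{2}$ in the denominator, after which the $\ell$-sum converges only for $\alpha>2$. In short, your argument is complete in outline for $\alpha\neq 2$; at $\alpha=2$ the statement itself needs a logarithmic correction (replace $(t-s)^{2H}$ by $\rho_{1}^{2}(t-s)$ there), so you should either exclude that value explicitly or restate the bound accordingly rather than try to "absorb" the log.
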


\begin{proof}
Recall (\ref{def:g_l}), we decompose $A_{\ell ,2}$ into the following form
\begin{equation} \label{def:A_L2}
\begin{split}
A_{\ell ,2}
&= \int_{0}^{T}\int_{0}^{T}\left[ g_{\ell }(t,\lambda )-g_{\ell
}(s,\xi )\right] ^{2}
|\xi -\lambda |^{2H-2}d\xi d\lambda  \\
&\qquad +2\left[ 1-P_{\ell }(\left\langle x,y\right\rangle )\right]
\int_{0}^{t}\int_{0}^{s}e^{-\ell (\ell +1)(t+s-\lambda -\xi )}
|\lambda -\xi|^{2H-2}d\xi d\lambda   \\
&=:\widetilde{A}_{\ell ,2}^{0}+2\widetilde{A}_{\ell ,2}^{1}
\end{split}
\end{equation}%
and we divide the proof into two steps.

Step1: Approximation for $\widetilde{A}_{\ell ,2}^{0}$. Recall (\ref{Fourier transform}) and (\ref{Fourier-g_l}),
\[
\begin{split}
\widetilde{A}_{\ell ,2}^{0}
&=c_{H}\int_{\mathbb{R}}\left\vert
\widehat{g_{\ell }}(t,\tau )-\widehat{g_{\ell }}(s,\tau )\right
\vert^{2}  |\tau|^{-(2H-1)}d\tau \\
&\leq  c_{H}\int_{\mathbb{R}}
\frac {\left\vert e^{-\ell (\ell +1)s}-e^{-\ell (\ell +1)t}\right\vert ^{2}}
{\left[ \ell (\ell +1)\right] ^{2}+\tau ^{2}}
|\tau |^{-(2H-1)}d\tau \\
&\qquad +c_{H}\int_{\mathbb{R}}
\frac {\left\vert e^{-it\tau}-e^{-is\tau }\right\vert ^{2}}
{\left[ \ell (\ell +1)\right] ^{2}+\tau ^{2}}
|\tau |^{-(2H-1)}d\tau \\
&:=c_{H}\left\{ \widetilde{_{1}A}_{\ell,2}^{0}+\widetilde{_{2}A}_{\ell ,2}^{0}
\right\}.
\end{split}
\]
Recall (\ref{int-tau}) and use the fact
\begin{equation*}
\left\vert e^{-\ell (\ell +1)s}-e^{-\ell (\ell +1)t}\right\vert ^{2}
=\int_{s}^{t}\int_{s}^{t}e^{-\ell (\ell +1)(w+v)}dwdv,
\end{equation*}%
we have
\begin{equation}\label{eq2}
\begin{split}
\sum_{\ell =1}^{\infty }\frac{2\ell +1}{4\pi}
C_{\ell }\cdot \widetilde{_{1}A}_{\ell ,2}^{0}
&\leq c_{3,3}\sum_{\ell =1}^{\infty }\frac{2\ell +1}{4\pi }
C_{\ell }\int_{s}^{t}\int_{s}^{t}e^{-\ell (\ell +1)(w+v)}dwdv
\left[\ell(\ell +1)\right]^{2-2H}   \\
&\leq c_{4,4}  \int_{s}^{t}\int_{s}^{t}
(w+v)^{\alpha/2-3+2H}dwdv    \\
&\leq  c_{4,5}\left\vert t-s\right\vert ^{\min{\{\alpha/2-1+2H,2\}}}.
\end{split}
\end{equation}%
In the meantime,
\begin{equation}
\begin{split}
\sum_{\ell =1}^{\infty }\frac{2\ell +1}{4\pi }C_{\ell }
\cdot \widetilde{_{2}A}_{\ell ,2}^{0}
&\leq \sum_{\ell =1}^{\infty }
\frac{(2\ell +1)C_{\ell }} {4\pi \left[ \ell (\ell +1)\right] ^{2}}
\int_{\mathbb{R}} \left\vert e^{-it\tau }-e^{-is\tau }\right\vert ^{2}
|\tau |^{-(2H-1)}d\tau   \\
&\leq c_{4,6}c_{H}\int_{s}^{t}\int_{s}^{t}
|\lambda -\xi |^{2H-2}d\xi d\lambda
=c_{4,7}|t-s|^{2H}.
\label{eq3}
\end{split}%
\end{equation}
Here and above, $c_{4,4}, \ldots, c_{4,7}$ are  positive
constants depending on $c_{0},$ $\alpha $ and $H.$ Therefore, combining
inequalities (\ref{eq2}) and (\ref{eq3}), we have that for $|t-s|$ small
enough,
\begin{equation}
\sum_{\ell =1}^{\infty }\frac{2\ell +1}{4\pi }
C_{\ell }\widetilde{A}_{\ell,2}^{0}
\leq 2c_{4,6}|t-s|^{2H-\max{\{1-\alpha/2,0\}}}.
\label{ineq:A_l21}
\end{equation}%

Step2: Approximation for $\widetilde{A}_{\ell ,2}^{1}$. Recall (\ref{Fourier transform}), 
(\ref{Fourier-g_l}) and (\ref{int-tau}), we have%
\begin{equation*}
\begin{split}
&\left\vert \int_{0}^{t}\int_{0}^{s}
e^{-\ell (\ell +1)(t+s-\lambda -\xi )}
|\lambda -\xi|^{2H-2}d\xi d\lambda
\right\vert \\
&\leq  c_{H}\int_{\mathbb{R}}\widehat{g_{\ell }}(t,\tau)
\overline{\widehat{g_{\ell }}(s,\tau )}
|\tau |^{-(2H-1)}d\tau
=c_{H}c_{3,3}\left[ \ell (\ell +1)\right] ^{-2H}.
\end{split}
\end{equation*}%
Therefore, by \cite{LanMarXiao}\ Lemma 10, we have
\begin{equation}\label{ineq:A_l22}
\begin{split}
\sum_{\ell =0}^{\infty }\frac{2\ell +1}{4\pi }
C_{\ell }\widetilde{A}_{\ell,2}^{1}
&\leq c_{0}c_{H}c_{4,4}\sum_{\ell =0}^{\infty }
(\ell +\frac{1}{2})^{1-\alpha -4H}
\left[ 1-P_{\ell }(\left\langle x,y\right\rangle )\right]\\
&\leq c_{4,8}\rho^{2} _{\gamma }(\theta)
\end{split}
\end{equation}%
for some positive constant $c_{4,8}$ depending on $c_{0},$ $\alpha $ and $H$. Hence, 
combining inequalities (\ref{def:A_L2}) and (\ref{ineq:A_l21}) together with (\ref{ineq:A_l22}), 
we obtain Lemma (\ref{Lem:A_l2}).
\end{proof}

\subsection{Strong Local Nondeterminism}

In this section we prove the properties of strong local nondeterminism of the solution $\{u(t, x): t \in \mathbb T,\, x \in \mathbb{S}^2\}$
in   time variable $t\in \mathbb{T}$ and  spatial variable $x\in \mathbb{S}^{2}$, respectively. % is fixed.

First, let $x\in \mathbb{S}^{2}$ be fixed and we consider  the Gaussian process $\{ u(t, x), t \in \mathbb T\}$.
Without loss of generality, we write $u\left( t\right) =u\left( t,x\right) $
for brevity. 

\begin{proposition}
\label{Prop:SLND-t} Assume $1/2<H<1$ and  Conditions \textbf{(A.1)} and \textbf{(A.2)}.  Then 
there exist  constants $K_{4,4}>0$ and $0<\varepsilon <\delta $ (which do not depend on $x\in \mathbb{S}^{2}$) such
that  for all $t\in (0,T]$ and $r\in (0,\varepsilon )$,
\begin{equation} \label{Ineq:SLND}
\mathrm{Var}\left( u\left( t\right) |u\left( s\right) :s\in \mathbb{T},t-s\geq r \right) 
\geq K_{4,4}r^{2\eta},  
\end{equation}
where $\eta=H-\max{\{(2-\alpha)/4,0\}}$.
\end{proposition}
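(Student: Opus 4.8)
The plan is to establish this one-sided strong local nondeterminism through the Fourier-analytic description of conditional variance for Gaussian processes, together with an annihilating test-function argument. We work in the regime $0<r\le t$, for which the conditioning set $[0,t-r]=\{s\in\mathbb T:\,t-s\ge r\}$ is nonempty.

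First I would reduce to the case $u_0\equiv0$. Under \textbf{(A.2)} the field $u_0$ and the noise $W^H$ are independent, so writing $u(t)=A(t)+B(t)$ with $A$, $B$ the contributions of $u_0$ and $W^H$ respectively, one has for any finite linear combination
\[
\mathbb E\Big|u(t)-\sum_j a_j u(s_j)\Big|^2=\mathbb E\Big|A(t)-\sum_j a_j A(s_j)\Big|^2+\mathbb E\Big|B(t)-\sum_j a_j B(s_j)\Big|^2\ge \mathbb E\Big|B(t)-\sum_j a_j B(s_j)\Big|^2 .
\]
Since the conditional variance equals the infimum of the left side over finite linear combinations with $s_j\le t-r$ (it is the squared $L^2(\Omega)$-distance of $u(t)$ to the closed linear span of the conditioning variables), taking the infimum over $a_j$ shows it suffices to prove the bound for $B$; thus we may assume $u_0\equiv0$, so that $u_{\ell m}(t)=\sqrt{C_\ell}\int_0^t g_\ell(t,\lambda)\,d\beta_{\ell m}(\lambda)$.

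Next, for $s_1,\dots,s_n\le t-r$ and real $a_1,\dots,a_n$, I would use (\ref{Cov:ulm}), the identity (\ref{SumYlm^2}) at $y=x$, the fBm isometry (\ref{fBm-inprod}) and then the spectral identity (\ref{Fourier transform}) to write
\[
\mathbb E\Big|u(t)-\sum_j a_j u(s_j)\Big|^2=c_H\sum_{\ell\ge0}\frac{2\ell+1}{4\pi}C_\ell\int_{\mathbb R}\big|F_\ell(\tau)\big|^2|\tau|^{-(2H-1)}\,d\tau,
\]
where $F_\ell(\tau)=\widehat{g_\ell}(t,\tau)-\sum_j a_j\widehat{g_\ell}(s_j,\tau)$; the dependence on $x$ has disappeared through $\sum_m|Y_{\ell m}(x)|^2=(2\ell+1)/(4\pi)$, which is the source of uniformity in $x$. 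The crux is to bound each spectral integral below, uniformly in the $a_j$. Fix a smooth $\psi\ge0$ supported in $(1/2,1)$ with $\int\psi>0$, set $\phi_r(\lambda)=r^{-1}\psi\big((t-\lambda)/r\big)$ (supported in $(t-r,t)$), and let $g=\widehat{\phi_r}$. Because $g_\ell(s_j,\cdot)$ is supported in $[0,s_j]\subseteq[0,t-r]$ while $\phi_r$ lives on $(t-r,t)$, Parseval gives $\int_{\mathbb R}\widehat{g_\ell}(s_j,\tau)\overline{g(\tau)}\,d\tau=2\pi\int g_\ell(s_j,\lambda)\phi_r(\lambda)\,d\lambda=0$ for every $j$, whereas $\int_{\mathbb R}\widehat{g_\ell}(t,\tau)\overline{g(\tau)}\,d\tau=2\pi\int_0^1 e^{-\ell(\ell+1)rv}\psi(v)\,dv=:2\pi\,c_\ell(r)$, which is \emph{independent of $t$}. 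Hence $\int_{\mathbb R}F_\ell\,\overline g\,d\tau=2\pi c_\ell(r)$ regardless of the $a_j$, and Cauchy--Schwarz with the weights $|\tau|^{\pm(2H-1)}$ yields
\[
\int_{\mathbb R}|F_\ell(\tau)|^2|\tau|^{-(2H-1)}\,d\tau\ \ge\ \frac{4\pi^2\,|c_\ell(r)|^2}{\int_{\mathbb R}|g(\tau)|^2|\tau|^{2H-1}\,d\tau}.
\]

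Finally I would perform the scaling analysis. Since $g(\tau)=e^{-it\tau}\widehat\psi(-r\tau)$, the denominator equals $r^{-2H}\int_{\mathbb R}|\widehat\psi(\sigma)|^2|\sigma|^{2H-1}\,d\sigma$, a finite constant times $r^{-2H}$ (the integral converges because $2H-1\in(0,1)$ at the origin and $\widehat\psi$ is Schwartz at infinity). For the numerator, $c_\ell(r)$ is bounded below by a positive constant when $\ell(\ell+1)r\le1$ and decays exponentially when $\ell(\ell+1)r$ is large, so
\[
\sum_{\ell\ge0}\frac{2\ell+1}{4\pi}C_\ell|c_\ell(r)|^2\ \approx\ \sum_{\ell\le r^{-1/2}}(\ell+\tfrac12)^{1-\alpha},
\]
which is $\approx r^{-(2-\alpha)/2}$ when $\alpha<2$, of order $|\log r|$ when $\alpha=2$, and $\approx1$ when $\alpha>2$. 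Combining the three displays gives the conditional variance $\gtrsim r^{2H}\cdot r^{-\max\{(2-\alpha)/2,\,0\}}=r^{2\eta}$ with $\eta=H-\max\{(2-\alpha)/4,0\}$, uniformly in $x\in\mathbb S^2$ and in $t\in[r,T]$ (the case $\alpha=2$ even gains a factor $|\log r|$, which only helps). I expect the construction and verification of the annihilating test function $\phi_r$, together with the uniform control of the infinite spectral sum, to be the main obstacle; the separation of the regimes $\alpha<2$ and $\alpha>2$ in $\sum_{\ell\le r^{-1/2}}(\ell+\frac12)^{1-\alpha}$ is precisely what produces the cutoff $\max\{(2-\alpha)/4,0\}$ appearing in $\eta$.
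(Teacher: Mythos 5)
Your proposal is correct, and its skeleton is the same as the paper's: reduce to $u_0\equiv 0$ by independence, express $\mathbb E\big|u(t)-\sum_j a_j u(s_j)\big|^2$ as a weighted spectral sum $\sum_\ell \frac{2\ell+1}{4\pi}C_\ell\int_{\mathbb R}|F_\ell(\tau)|^2|\tau|^{-(2H-1)}d\tau$ via (\ref{Fourier transform}), pair against a test function supported (in the time domain) in $(t-r,t)$ so that all past terms are annihilated, and close with a weighted Cauchy--Schwarz inequality whose denominator $\int|g(\tau)|^2|\tau|^{2H-1}d\tau\approx r^{-2H}$ produces the $r^{2H}$ scaling. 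The paper uses a bump $\delta_{t,r}$ whose \emph{Fourier transform} is the compactly supported profile, but this is only a relabelling of which side of the transform carries the bump; the orthogonality $Q(t_j,t)=0$ is the same mechanism as your Parseval step.

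Where you genuinely diverge is in the order of operations with respect to $\ell$. The paper first aggregates the kernels into a single function $G_s=\sum_\ell w_\ell\, g_\ell(s,\cdot)$ and applies Cauchy--Schwarz once to the resulting double sum; this forces a choice of weights $w_\ell$ for which the dual sum converges, which is why the paper must split into the cases $\alpha>2$ (weights $\frac{2\ell+1}{4\pi}C_\ell$) and $0<\alpha\le 2$ (weights $(\ln\ell)^{-1}\sqrt{C_\ell}$), and why the exponent $\max\{(2-\alpha)/4,0\}$ emerges there from the singularity $(t-\lambda)^{-(2-\alpha)/4}$ in the lower bound for $Q(t,t)$. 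You instead apply Cauchy--Schwarz separately inside each spectral integral and only then sum the per-$\ell$ lower bounds $\frac{2\ell+1}{4\pi}C_\ell\,|c_\ell(r)|^2\cdot r^{2H}$; the exponent then emerges from the divergence of $\sum_{\ell\lesssim r^{-1/2}}(\ell+\tfrac12)^{1-\alpha}$. This is cleaner: it needs no case distinction, no auxiliary $(\ln\ell)^{-1}$ weights, and it transparently yields the extra $|\log r|$ gain at $\alpha=2$. The only caveats, which apply equally to the paper's version, are that the argument implicitly assumes $r\le t$ (so the conditioning set is nonempty, which is the regime used in Section \ref{Sec:ModulusContinuity}), and that one should note the conditional variance equals the infimum of $\mathbb E\big|u(t)-\sum_j a_j u(s_j)\big|^2$ over finite linear combinations from the past — a point you state explicitly.
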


\begin{proof} The proof is inspired by the proof of \cite[Theorem 2.1]{Xiao(SLND)}, but with a modification.
It is sufficient to prove that, there exists some positive constant $c_{4,9} $ such that
\begin{equation}\label{Eq:Slnd2}
V_{t}=:\mathbb{E}\bigg\vert u\left( t\right) -\sum_{j=1}^{n}a_{j}u\left( t_{j}\right) \bigg\vert ^{2}
\geq c_{4,9}r^{2H}
\end{equation}
for all integers $n\geq 1$ and all $t_{1},...,t_{n}\in
\mathbb{T}$ satisfying $|t-t_{j}|\geq r$.

Similar to (\ref{vario1}), we have
\[
\begin{split}
V_{t} &=\mathbb{E}\bigg\vert
\sum_{\ell =0}^{\infty }\sum_{m=-\ell }^{\ell }
u_{\ell m}(t)Y_{\ell m}(x)
-\sum_{j=1}^{n}a_{j}u_{\ell m}(t_{j})Y_{\ell m}(x) \bigg\vert ^{2} \\
&= \sum_{\ell =0}^{\infty }\frac{2\ell +1}{4\pi }
\bigg[ U_{\ell }(t,t)+\sum_{i,j=1}^{n}a_{i}a_{j}U_{\ell }(t_{i},t_{j})
-\sum_{j=1}^{n}a_{j}U_{\ell }(t,t_{j})) \bigg].
\end{split}
\]
Recall (\ref{def:Ul}), we have
\[
\begin{split}
V_{t} &=\sum_{i,j=0}^{n}a_{i}a_{j}
\sum_{\ell =0}^{\infty }\frac{2\ell +1}{4\pi }C_{\ell}
\int_{0}^{t_{i}}\int_{0}^{t_{j}}
e^{-\ell (\ell +1)(t_{i}+t_{j}-\xi -\lambda )}
|\xi -\lambda |^{2H-2}d\xi d\lambda
\\
&\qquad +\sum_{\ell =0}^{\infty }\frac{2\ell +1}{4\pi }D_{\ell }
\bigg\vert e^{-\ell (\ell +1)t}-\sum_{j=1}^{n}a_{j}e^{-\ell (\ell +1)t_{j}} \bigg\vert ^{2} \\
&\geq \sum_{i,j=0}^{n}a_{i}a_{j}
\sum_{\ell =0}^{\infty }\frac{2\ell +1}{4\pi }C_{\ell }
\int_{0}^{t_{i}}\int_{0}^{t_{j}}
e^{-\ell (\ell +1)(t_{i}+t_{j}-\xi -\lambda )}
|\xi -\lambda |^{2H-2}d\xi d\lambda
\\
&:=M(\mathbf{t}),
\end{split}%
\]
where the coefficient $a_{0}=-1$.

Now we construct a bump function $\delta _{t,r}(\cdot )\in S(\mathbb{R})$
(the Schwartz space on $\mathbb{R}$) for any $t>0$ such that its Fourier
transform $\widehat{\delta _{t,r}}$ vanishes outside the open interval $(t-r,t+r)$. Let
\begin{equation*}
\widehat{\delta _{t.r}}\left( \lambda \right)
=\left\{
\begin{array}{cc}
\exp \left\{ -\frac{r^{2}}{r^{2}-(\lambda -t)^{2}}\right\} ,
& \hbox{ if } |\lambda -t|<r,
\\
0, & \hbox{ otherwise}.%
\end{array}%
\right. 
\end{equation*}%
Then $supp$ $\widehat{\delta _{t,r}}\subseteq (t-r,t+r)$. Note that
\[
\begin{split}
\delta _{t,r}(\tau ) &=\int_{-\infty }^{\infty }e^{i\tau \lambda }
\widehat{\delta _{t,r}}\left( \lambda \right) d\lambda
=\int_{t-r}^{t+r}\exp \Big(
i\tau \lambda -\frac{r^{2}}{r^{2}-(\lambda -t)^{2}}\Big) d\lambda \\
&=r\, e^{i\tau t}\int_{-1}^{1}
\exp \left\{ i\tau ry-\frac{1}{1-y^{2}}\right\} dy.
\end{split}%
\]
Simple calculation yields that
\begin{equation*}
|\delta _{t,r}\left( \tau \right) |\leq 2r, \
\hbox{ for any }\tau \in \mathbb{R}
\end{equation*}%
and
\begin{equation*}
|\delta _{t,r}\left( \tau \right) |
=\bigg\vert \int_{-\infty }^{\infty}
e^{i\tau \lambda }\widehat{\delta _{t,r}}\left( \lambda \right) d\lambda
\bigg\vert 
\approx r\tau ^{-3/4}e^{-\sqrt{\tau }/2}, \ \
\hbox{ as }\ \tau \rightarrow \infty .
\end{equation*}%
See for instance \cite{Saddle-point} as well. Hence $\delta _{t,r}(\tau )\in S(\mathbb{R})$; that is, there 
exists $\delta _{t,r}\in S(\mathbb{R})$, such that $\widehat{\delta _{t,r}}\left( \tau \right) =
\int_{\mathbb{R}}\delta_{t,r}(s)e^{-i\tau s}ds$.

We are ready to prove \eqref{Eq:Slnd2}. Recall (\ref{Fourier
transform}) and (\ref{Fourier-g_l}), we can rewrite  $M(\mathbf{t})$   as
\begin{equation}
M(\mathbf{t})
=c_{H}\sum_{\ell =0}^{\infty }\frac{2\ell +1}{4\pi }C_{\ell }
\int_{-\infty }^{\infty }
\bigg\vert \widehat{g_{\ell }}\left( t,\tau \right) -\sum_{j=1}^{n}a_{j}\widehat{g_{\ell }}\left( t_{j},\tau \right)
\bigg\vert ^{2} \frac{d\tau}{|\tau |^{2H-1}}.
\label{def:M(t)}
\end{equation}%

We distinguish the two cases $\alpha >2$ and $0< \alpha \le 2$.
 
\noindent{(i).} If $\alpha >2,$ let
\begin{equation*}
G_{s}\left( \lambda \right)
=:\sum_{\ell =0}^{\infty }\frac{2\ell +1}{4\pi }
C_{\ell }g_{\ell }(s,\lambda ),\quad 
s\in \mathbb{T},\, \lambda \in \mathbb{R},
\end{equation*}%
then 
\[
\begin{split}
Q(s,t) &=:\int_{-\infty }^{\infty }
\widehat{G_{s}}\left( \tau \right)\delta _{t,r}(\tau )d\tau \\
&= \int_{0}^{s}G_{s}( \lambda )
\left[
\int_{-\infty }^{\infty}\delta _{t,r}(\tau )e^{-i\tau \lambda }d\tau
\right] d\lambda
=\int_{0}^{s}G_{s}( \lambda )
\widehat{\delta _{t,r}}(\lambda ) d\lambda.
\end{split}%
\]
It is readily seen that $supp\ Q\subseteq \left\{ (t,s):s>t-r\right\} $.
Hence $Q(t_{j},t)=0$ for any $0<t_{j}<t-r,\ j=1,...,n$. Moreover, let $0<r<\varepsilon <1$, then
\[
\begin{split}
Q(t,t) &=\int_{0}^{t}G_{t}\left( \lambda \right)
\widehat{\delta _{t,r}}( \lambda ) d\lambda
=\int_{t-r}^{t}G_{t}( \lambda )
\widehat{\delta _{t,r}}\left( \lambda \right) d\lambda \\
&\approx \int_{t-r}^{t}
\bigg[ \sum_{\ell =0}^{\infty }\frac{2\ell +1}{4\pi }
(\ell +\frac{1}{2})^{-\alpha }(t-\lambda )^{(1-\alpha )/2}
e^{-\ell (\ell +1)(t-\lambda )}\sqrt{t-\lambda }\bigg] \\
& \qquad \cdot \frac{e^{(t-\lambda )/4}}{(t-\lambda )^{1-\alpha /2}}
\widehat{\delta_{t,r}}\left(\lambda \right) d\lambda \\
&\approx \int_{t-r}^{t}
\big(1+(t-\lambda )^{\alpha /2-1}\big)\widehat{\delta _{t}}( \lambda ) d\lambda \\
&=r\int_{0}^{1}(1+(ry)^{\alpha /2-1})
\exp \Big( -\frac{1}{1-y^{2}}\Big) dy \geq r,
 \end{split}%
\]
which leads to 
\begin{equation*}
\bigg|Q(t,t)-\sum_{j=1}^{n}a_{j}Q(t_{j},t) \bigg|\geq r
\end{equation*}%
for $\max_{j=1,...,n}t_{j}\leq t-r$.
Meanwhile, recall the representation (\ref{def:M(t)}) of $M(\mathbf{t})$,
then by the Cauchy-Schwartz inequality, we have
\[
\begin{split} 
&\bigg| Q(t,t)-\sum_{j=1}^{n}a_{j}Q(t_{j},t) \bigg|^{2} \\
&=\bigg \vert \sum_{\ell =0}^{\infty }\sum_{m}
\frac{2\ell +1}{4\pi}C_{\ell}
\int_{-\infty }^{\infty }
\bigg( \widehat{g_{\ell }}\left( t,\tau \right)
-\sum_{j=1}^{n}a_{j}\widehat{g_{\ell }}\left( t_{j},\tau \right) \bigg)
\delta _{t}(\tau )d\tau
\bigg \vert ^{2} \\
&\leq 
\left\{ \sum_{\ell =0}^{\infty }\frac{2\ell +1}{4\pi }C_{\ell }\right\}
\sum_{\ell =0}^{\infty }\sum_{m}\frac{2\ell +1}{4\pi }C_{\ell }
\bigg\vert
\int_{-\infty }^{\infty }
\bigg( \widehat{g_{\ell }}\left( t,\tau
\right) -\sum_{j=1}^{n}a_{j}\widehat{g_{\ell }}\left( _{t_{j}},\tau \right)
\bigg) \delta _{t}(\tau )d\tau \bigg\vert ^{2} \\
&\leq  c_{4,10}\sum_{\ell =0}^{\infty }\frac{2\ell +1}{4\pi }C_{\ell}
\int_{-\infty }^{\infty }\bigg \vert \widehat{g_{\ell }}( t,\tau)
-\sum_{j=1}^{n}a_{j}\widehat{g_{\ell }}\left( _{t_{j}},\tau \right)
\bigg\vert ^{2}
|\tau |^{-(2H-1)}d\tau \\
& \qquad \cdot\left\{ \int_{-\infty }^{\infty}
|\delta _{t}(\tau )|^{2}|\tau |^{2H-1}d\tau
\right\} \\
&= c_{4,10}M(\mathbf{t})\int_{-\infty }^{\infty }
|\delta _{t}(\tau)|^{2}|\tau |^{2H-1}d\tau .
 \end{split}%
\] 
for some positive constant $c_{4,10}$ depending on $\alpha $. Now recall
formula (\ref{Fourier transform}) again, we have
\begin{eqnarray}
&&c_{H}\int_{-\infty }^{\infty }
|\delta _{t}(\tau )|^{2}|\tau |^{2H-1}d\tau
=c_{H}\int_{-\infty }^{\infty }|\delta _{t}(\tau )|^{2}
|\tau|^{1-2(1-H)}d\tau                         \notag \\
&=&\int_{t-r}^{t+r}\int_{t-r}^{t+r}
\widehat{\delta _{t}}(\lambda )\overline{\widehat{\delta _{t}}(\xi )}
|\xi -\lambda |^{2(1-H)-2}d\xi d\lambda            \notag \\
&=&r^{2-2H}\int_{-1}^{1}\int_{-1}^{1}\exp \left\{
 -\frac{1}{1-\xi ^{\prime 2}}-\frac{1}{1-\lambda ^{\prime 2}}
 \right\}
 |\xi ^{\prime }-\lambda ^{\prime}|^{-2H}
 d\xi ^{\prime }d\lambda^{\prime }            \notag  \\
&\leq &c_{4,13}r^{2(1-H)}
\label{int_delta}
\end{eqnarray}%
for some positive constant $c_{4,11}$ depending on $H$. Hence,
\begin{equation*}
M(\mathbf{t})\geq \frac{r^{2}}{c_{4,10}c_{4,11}r^{2-2H}}
=\frac{r^{2H}}{c_{4,10}c_{4,11}}.
\end{equation*}%

\noindent{(ii).} If $0<\alpha \le 2,$ let
\begin{equation*}
G_{s}\left( \lambda \right)
=:\sum_{\ell =0}^{\infty }(\ln \ell )^{-1}
\sqrt{C_{\ell }}g_{\ell }(s,\lambda ),\  \ \ s\in \mathbb{T},\, \lambda \in \mathbb{R}.
\end{equation*}%
Consider the function
\[
\begin{split}
Q(s,t) &=:\int_{-\infty }^{\infty }\widehat{G_{s}}\left( \tau \right)
\delta _{t,r}(\tau )d\tau  \\
&=\int_{0}^{s}G_{s}(\lambda )\left[ \int_{-\infty }^{\infty }
\delta_{t,r}(\tau )e^{-i\tau \lambda }d\tau \right] d\lambda
=\int_{0}^{s}G_{s}(\lambda )\widehat{\delta _{t,r}}(\lambda )d\lambda.
\end{split}%
\]
Then   $supp\ Q\subseteq \left\{ (t,s):s>t-r\right\}$, which implies
 $Q(t_{j},t)=0$ for any $0<t_{j}<t-r,\ j=1,...,n$.
Moreover, let $0<r<\varepsilon <1$, then
\[
\begin{split}
Q(t,t) &=\int_{0}^{t}G_{t}( \lambda )
\widehat{\delta _{t,r}}(\lambda )d\lambda
=\int_{t-r}^{t}G_{t}(\lambda )
\widehat{\delta _{t,r}}\left( \lambda \right) d\lambda  \\
&\approx \int_{t-r}^{t}\left[ \sum_{\ell =0}^{\infty }
(\ln \ell )^{-1}(\ell +\frac{1}{2})^{-\alpha /2}
(t-\lambda )^{-\alpha /4}
e^{-\ell (\ell +1)(t-\lambda )}\sqrt{t-\lambda }\right]  \\
&\qquad \cdot \frac{e^{(t-\lambda )/4}}{(t-\lambda )^{(2-\alpha )/4}}
\widehat{\delta _{t,r}}\left( \lambda \right) d\lambda  \\
&\geq \frac{1}{\ln 2}\int_{t-r}^{t}
(1+\frac{1}{(t-\lambda )^{(2-\alpha )/4}})
\widehat{\delta _{t}}(\lambda )d\lambda  \\
&=r\int_{0}^{1}(1+\frac{1}{(ry)^{(2-\alpha )/4}}
\exp \left\{ -\frac{1}{1-y^{2}}\right\} dy\geq r^{1-\max \{(2-\alpha )/4,0\}}.
\end{split}%
\]
It follows that 
\begin{equation*}
\bigg|Q(t,t)-\sum_{j=1}^{n}a_{j}Q(t_{j},t) \bigg|^{2}\geq r^{2-\max \{(2-\alpha )/2,0\}}
\end{equation*}%
for $\max_{j=1,...,n}t_{j}\leq t-r$. 
Meanwhile, recall the representation (\ref{def:M(t)}) of $M(\mathbf{t})$,
then by the Cauchy-Schwartz inequality, we have
\begin{eqnarray*}
&&|Q(t,t)-\sum_{j=1}^{n}a_{j}Q(t_{j},t)|^{2} \\
&=&\left\vert \sum_{\ell =0}^{\infty }\sum_{m}
\frac{2\ell +1}{4\pi}C_{\ell}
\int_{-\infty }^{\infty }
\left[ \widehat{g_{\ell }}\left( t,\tau \right)
-\sum_{j=1}^{n}a_{j}\widehat{g_{\ell }}\left( t_{j},\tau \right) \right]
\delta _{t}(\tau )d\tau
\right\vert ^{2} \\
&\leq &
\left\{ \sum_{\ell =0}^{\infty }\frac{2\ell +1}{4\pi }C_{\ell }\right\}
\sum_{\ell =0}^{\infty }\sum_{m}\frac{2\ell +1}{4\pi }C_{\ell }
\left\vert
\int_{-\infty }^{\infty }
\left[ \widehat{g_{\ell }}\left( t,\tau
\right) -\sum_{j=1}^{n}a_{j}\widehat{g_{\ell }}\left( _{t_{j}},\tau \right)
\right]
\delta _{t}(\tau )d\tau
\right\vert ^{2} \\
&\leq &c_{4,12}\sum_{\ell =0}^{\infty }\frac{2\ell +1}{4\pi }C_{\ell}
\int_{-\infty }^{\infty }\left\vert \widehat{g_{\ell }}( t,\tau)
-\sum_{j=1}^{n}a_{j}\widehat{g_{\ell }}\left( _{t_{j}},\tau \right)
\right\vert ^{2}
|\tau |^{-(2H-1)}d\tau \\
&&\cdot\left\{ \int_{-\infty }^{\infty}
|\delta _{t}(\tau )|^{2}|\tau |^{2H-1}d\tau
\right\} \\
&=&c_{4,12}M(\mathbf{t})\int_{-\infty }^{\infty }
|\delta _{t}(\tau)|^{2}|\tau |^{2H-1}d\tau .
\end{eqnarray*}%
for some universal constant $c_{4,12}>0$. The last inequality is obtained similar to the argument in (i) and hence by (\ref{int_delta}) we
have
\begin{equation*}
M(\mathbf{t})\geq \frac{r^{2-\max \{(2-\alpha )/2,0\}}}
{c_{4,10}c_{4,11}r^{2-2H}}
=\frac{r^{2\eta }}{c_{4,10}c_{4,11}}.
\end{equation*}%
Thus, the proof is completed.
\end{proof}

The next proposition is concerned with the strong local nondeterminism of $u\left( t,x\right) $ 
in space variable $x \in \mathbb{S}^2$, when $t\in \mathbb{T}$ fixed. Again, with a slight abuse
of notation, we write $u\left( x\right) =u\left( t,x\right) $ for brevity.

\begin{proposition}
\label{Prop:SLND-x} Assume that $1/2<H<1$,  Condition \textbf{(A.1)}  with $0 < \alpha < 2$,
%and \textbf{(A.2)} with  $\alpha +4H>2$,
and  $u_{0}\equiv 0$.  If $\gamma =\alpha /2-1+2H\in (0,1)$, then, for every $t\in (0,T]$ fixed, there exist 
constants $K_{4,5}>0$, depending on $t$ and $0<\varepsilon <\delta$, such
that we have for all $x_{0},x_{1},...,x_{n}\in \mathbb{S}^{2}$ with
$\min \left\{ d_{\mathbb{S}^{2}}(x_{0},x_{j}),j=1,...,n\right\} =r\in (0,\varepsilon )$,
\begin{equation} \label{Eq:Slnd2}
\mathrm{Var}\big( u\left(t, x_{0}\right) |u\left(t, x_{1}\right) ,...,u\left(t,x_{n}\right)
\big) \geq K_{4,5}r^{2\gamma }.
\end{equation}
\end{proposition}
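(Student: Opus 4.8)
The plan is to turn the conditional variance into a deterministic extremal problem and then solve it with a rescaled bump function supported in the small geodesic ball about $x_{0}$. Since $u_{0}\equiv 0$, the coefficients $u_{\ell m}(t)$ are uncorrelated across $(\ell ,m)$ with $\mathbb{E}|u_{\ell m}(t)|^{2}=U_{\ell }(t,t)=C_{\ell }\sigma _{\ell }^{2}(t)$, so by \eqref{Cov:ulm}, \eqref{def:Ul} and \eqref{SumYlm^2}, exactly as in the computation of $V_{t}$ in the proof of Proposition \ref{Prop:SLND-t}, for any real $a_{1},\dots ,a_{n}$,
\begin{equation*}
\mathbb{E}\Big|u(x_0)-\sum_{j=1}^{n}a_{j}u(x_j)\Big|^{2}
=\sum_{\ell =0}^{\infty }U_{\ell }(t,t)\sum_{m=-\ell }^{\ell }\Big|Y_{\ell m}(x_0)-\sum_{j=1}^{n}a_{j}Y_{\ell m}(x_j)\Big|^{2}.
\end{equation*}
Because $\mathrm{Var}(u(x_0)\mid u(x_1),\dots ,u(x_n))$ equals the infimum of the left-hand side over $(a_{j})$, it suffices to bound the right-hand side below by $c\,r^{2\gamma }$ uniformly in $(a_{j})$.

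The key step is a duality/Cauchy--Schwarz bound. By the isotropy of $u(t,\cdot )$ I may take $x_{0}$ to be the north pole, and I would fix a real smooth profile rescaled to width $r$ in geodesic normal coordinates at $x_{0}$, giving $\psi $ with $\mathrm{supp}\,\psi \subseteq B(x_{0},r)=\{x:d_{\mathbb{S}^{2}}(x_{0},x)<r\}$ and $\psi (x_{0})=1$. Writing $\psi =\sum_{\ell ,m}\psi _{\ell m}Y_{\ell m}$, the hypothesis $d_{\mathbb{S}^{2}}(x_{0},x_{j})\geq r$ forces $\psi (x_{j})=0$ for all $j$, whence
\begin{equation*}
\psi (x_0)=\sum_{\ell ,m}\psi _{\ell m}\Big(Y_{\ell m}(x_0)-\sum_{j=1}^{n}a_{j}Y_{\ell m}(x_j)\Big).
\end{equation*}
Applying Cauchy--Schwarz with weight $U_{\ell }(t,t)$ and using $\psi (x_{0})=1$ then yields, uniformly in $(a_{j})$,
\begin{equation*}
\mathbb{E}\Big|u(x_0)-\sum_{j=1}^{n}a_{j}u(x_j)\Big|^{2}
\geq \frac{|\psi (x_0)|^{2}}{\sum_{\ell ,m}|\psi _{\ell m}|^{2}/U_{\ell }(t,t)}
=\Big(\sum_{\ell ,m}|\psi _{\ell m}|^{2}/U_{\ell }(t,t)\Big)^{-1}.
\end{equation*}

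It remains to bound the denominator. First I would record the two-sided estimate $U_{\ell }(t,t)\approx (1+\ell )^{-(\alpha +4H)}=(1+\ell )^{-(2\gamma +2)}$ (with constants depending on $t$): the upper bound is \eqref{ineq:I2}, and the matching lower bound follows from the Fourier representation \eqref{Fourier transform}, \eqref{Fourier-g_l}, \eqref{int-tau} of $\sigma _{\ell }^{2}(t)$, since for large $\ell $ the term $e^{-\ell (\ell +1)t}$ is negligible and $\sigma _{\ell }^{2}(t)\approx [\ell (\ell +1)]^{-2H}$ (the $t$-dependence of the constant comes from the moderate-$\ell $ range, which is exactly why $K_{4,5}$ depends on $t$). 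Using the lower bound on $U_{\ell }$ together with $\ell (\ell +1)\approx (1+\ell )^{2}$ gives
\begin{equation*}
\sum_{\ell ,m}\frac{|\psi _{\ell m}|^{2}}{U_{\ell }(t,t)}
\leq c(t)\sum_{\ell ,m}(1+\ell )^{2\gamma +2}|\psi _{\ell m}|^{2}
\approx c(t)\,\|\psi \|_{H^{\gamma +1}({\mathbb{S}}^{2})}^{2},
\end{equation*}
the fractional Sobolev norm of order $\gamma +1\in (1,2)$. A scaling computation then gives $\|\psi \|_{H^{\gamma +1}({\mathbb{S}}^{2})}^{2}\leq C\,r^{-2\gamma }$ for all small $r$, uniformly in $x_{0}$, so combining the three displays produces $\mathbb{E}|u(x_0)-\sum _{j}a_{j}u(x_j)|^{2}\geq K_{4,5}\,r^{2\gamma }$, which establishes the stated bound \eqref{Eq:Slnd2}.

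The main obstacle is the last Sobolev estimate: because $\gamma +1$ is fractional and $\mathbb{S}^{2}$ is curved, the scaling $\|\psi \|_{H^{\gamma +1}}^{2}\approx r^{-2\gamma }$ cannot simply be read off from the flat case and must be justified. I expect to control it by transporting $\psi $ to a fixed normal-coordinate chart and comparing the spherical fractional seminorm with its Euclidean counterpart on $B(0,r)\subset \mathbb{R}^{2}$ (conveniently via the Gagliardo seminorm of $\nabla _{\mathbb{S}^{2}}\psi $, which has size $r^{-1}$ on a set of area $\approx r^{2}$ and varies on scale $r$); the curvature contributes only lower-order corrections as $r\downarrow 0$, and the homogeneous top-order piece scales like $r^{2-2(\gamma +1)}=r^{-2\gamma }$, which dominates precisely because $0<\gamma <1$. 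Estimates of this type for bump functions on $\mathbb{S}^{2}$ are available from the spherical harmonic analysis in \cite{LanMarXiao}, on which I would rely; the lower bound on $U_{\ell }(t,t)$ is the only genuinely new computation and is routine given the Fourier identities already used in Lemma \ref{Lem:convergence}.
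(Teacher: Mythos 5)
Your proposal is correct in substance, but it takes a different (more self-contained) route than the paper. The paper's own proof is two lines: since $u(t,\cdot)$ is $2$-weakly isotropic, it computes the two-sided bound on the angular power spectrum, $\mathbb{E}|u_{\ell m}(t)|^{2}\approx (\ell+\tfrac12)^{-(\alpha+4H)}$ via \eqref{Fourier transform}--\eqref{int-tau}, and then invokes Theorem 1 of \cite{LanMarXiao} as a black box, which delivers the spatial SLND for any isotropic spherical Gaussian field whose power spectrum decays at a rate in $(2,4)$. What you do is verify exactly the same spectral estimate (your lower bound is right: for $\ell\ge 1$ one has $|e^{-it\tau}-e^{-\ell(\ell+1)t}|^{2}\ge (1-e^{-2t})^{2}$ pointwise, which is where the $t$-dependence of $K_{4,5}$ enters) and then \emph{unfold} the cited theorem, reproducing its bump-function duality argument: reduce the conditional variance to an infimum over linear predictors, test against a width-$r$ bump $\psi$ vanishing at all $x_{j}$, apply Cauchy--Schwarz with weights $U_{\ell}(t,t)^{-1}$, and reduce everything to the scaling $\|\psi\|_{H^{\gamma+1}(\mathbb{S}^{2})}^{2}\lesssim r^{-2\gamma}$. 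That last scaling estimate, which you correctly flag as the only genuinely delicate point, is precisely the technical core of the proof of the theorem the paper cites, and you propose to import it from \cite{LanMarXiao} anyway; so your argument does not actually remove the dependence on that reference, it just relocates it. The trade-off is that your version makes transparent where the exponent $2\gamma$ and the restriction $\gamma\in(0,1)$ (equivalently $2<\alpha+4H<4$, so that $\gamma+1\in(1,2)$ and the homogeneous piece of the Sobolev norm dominates as $r\downarrow 0$) come from, at the cost of having to justify the fractional Sobolev scaling on a curved manifold, whereas the paper's citation-based proof is shorter but opaque. Both proofs share the same single new computation, namely \eqref{APS-u}.
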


\begin{proof}
Since the Gaussian random field $u(t)=\{u(t,x), x \in \mathbb{S}^2\}$ is 2-weakly isotropic, the results in  \cite{LanMarXiao}
is applicable. Hence, in order to prove \eqref{Eq:Slnd2}, we only need to derive the asymptotic property 
of the angular power spectrum of $\{u(x), x \in \mathbb{S}^2\}$. Recall (\ref{Cov:ulm}), under the condition of $u_{0}\equiv 0$, the angular power spectrum of $u(t,x)$  
\begin{equation*}
\mathbb{E}|u_{\ell m}(t)|^{2}=C_{\ell }\int_{0}^{T}\int_{0}^{T}g_{\ell }(t,\lambda )g_{\ell }(t,\xi )
|\xi-\lambda |^{2H-2}d\xi d\lambda
\end{equation*}
where $g_{\ell }$ is the function defined in (\ref{def:g_l}).
Recall formulae (\ref{Fourier transform}), (\ref{Fourier-g_l}) and (\ref{int-tau}), we have
\begin{equation*}
\mathbb{E} |u_{\ell m}(t)|^{2}=c_{H}C_{\ell }\int_{\mathbb{R}}
\frac{|e^{-it\tau }-e^{-\ell (\ell +1)t}|^{2}}{[\ell (\ell +1)]^{2}+\tau ^{2}}
|\tau |^{-(2H-1)}d\tau     
\end{equation*}
and there exists a constant $c_{4,13}>1$ such that 
\begin{equation}\label{APS-u}
c^{-1}_{4,13}(\ell +\frac{1}{2})^{-(\alpha +4H)} 
\leq \mathbb{E}|u_{\ell m}(t)|^{2}
\leq c_{4,13}(\ell +\frac{1}{2})^{-(\alpha +4H)}
\end{equation}
The inequality (\ref{Eq:Slnd2}) is then obtained by Theorem 1 in \cite{LanMarXiao} for $2< \alpha +4H < 4$.
\end{proof}

As an immediate consequence of Propositions \ref{Prop:Variogram}, \ref{Prop:SLND-t} and 
Proposition \ref{Prop:SLND-x}, we have the following corollary.

\begin{corollary}
Assume $1/2<H<1$ and Conditions \textbf{(A.1)} and \textbf{(A.2)} hold.

\begin{enumerate}
\item[(i)] if $\beta >4H+2$ or $u_{0}\equiv 0,$ then for any $x\in \mathbb{S}^{2}$ fixed, there 
exists a constant $K_{4,6}>1$ such that
\begin{equation*}
K_{4,6}^{-1}[|t-s|^{2\eta}
\leq \mathbb{E}|u( t, x ) -u(s,x ) |^{2} \leq K_{4,6}[|t-s|^{2\eta},
\end{equation*}
where $\eta = H - \max\{(2- \alpha)/4,\, 0\}.$

\item[(ii)] if $u_{0}\equiv 0$ and $\gamma =\alpha /2-1+2H\in (0,1),$ then
for any $t\in \mathbb{T}$ fixed, there exists a constant $K_{4,7}>1$ such
that
\begin{equation*}
K_{4,7}^{-1}\left( d_{\mathbb{S}^{2}}(x,y)\right) ^{2\gamma }
\leq \mathbb{E}|u\left( t,\,x\right) -u\left( t, \, y\right) |^{2}
\leq K_{4,7}\left( d_{\mathbb{S}^{2}}(x,y)\right) ^{2\gamma }.
\end{equation*}
\end{enumerate}
\end{corollary}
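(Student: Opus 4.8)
The plan is to read off both inequalities in each part directly from the three results already established: the upper bounds come from the variogram estimate of Proposition \ref{Prop:Variogram}, while the lower bounds come from the strong local nondeterminism of Propositions \ref{Prop:SLND-t} and \ref{Prop:SLND-x}. The one elementary ingredient I would record first is the standard Gaussian fact that, for jointly Gaussian, mean-zero random variables $X$ and $Y_1,\ldots,Y_n$,
\begin{equation*}
\mathbb{E}|X-Y_j|^2 \ge \mathrm{Var}\big(X \mid Y_1,\ldots,Y_n\big), \qquad j=1,\ldots,n,
\end{equation*}
because the conditional variance equals the squared $L^2$-distance from $X$ to the linear span of $Y_1,\ldots,Y_n$, which is no larger than $\mathbb{E}|X-Y_j|^2$; moreover, conditioning on additional variables only decreases the conditional variance. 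Since $u$ is a mean-zero Gaussian field, this applies throughout.

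For part (i), I would fix $x$ and set $y=x$, so that $\theta=d_{\mathbb{S}^2}(x,x)=0$ and $\rho_\gamma(\theta)=0$. The upper bound $\mathbb{E}|u(t,x)-u(s,x)|^2\le K_{4,6}|t-s|^{2\eta}$ then follows from Proposition \ref{Prop:Variogram}, once I check that the initial-condition contribution is also $O(|t-s|^{2\eta})$. When $u_0\equiv 0$ this term vanishes. When $\beta>4H+2$, Lemma \ref{Lem:A_l1} bounds it (with $\theta=0$) by a constant times $(t-s)^{2\wedge(\beta/2-1)}$; and $\beta>4H+2$ forces $\beta/2-1>2H\ge 2\eta$, so for $|t-s|<\delta$ this is dominated by $|t-s|^{2\eta}$. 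For the lower bound I would invoke Proposition \ref{Prop:SLND-t} with the single instant $s$ (taking $r=t-s=|t-s|<\varepsilon$); combining the displayed Gaussian inequality with conditioning monotonicity gives $\mathbb{E}|u(t,x)-u(s,x)|^2\ge \mathrm{Var}\big(u(t)\mid u(s)\big)\ge K_{4,4}|t-s|^{2\eta}$.

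For part (ii), I would fix $t$ and set $s=t$, so that $|t-s|=0$. Since $u_0\equiv 0$ and $\gamma\in(0,1)$ we have $\rho_\gamma(\theta)=\theta^\gamma$, and Proposition \ref{Prop:Variogram} yields $\mathbb{E}|u(t,x)-u(t,y)|^2\le K_{4,1}\big(d_{\mathbb{S}^2}(x,y)\big)^{2\gamma}$. For the lower bound I would apply Proposition \ref{Prop:SLND-x} with $n=1$, $x_0=x$, $x_1=y$, so that $r=d_{\mathbb{S}^2}(x,y)$; the same Gaussian inequality gives $\mathbb{E}|u(t,x)-u(t,y)|^2\ge \mathrm{Var}\big(u(t,x)\mid u(t,y)\big)\ge K_{4,5}\big(d_{\mathbb{S}^2}(x,y)\big)^{2\gamma}$. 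Choosing $K_{4,6}$ and $K_{4,7}$ as the maxima of the relevant upper-bound constants and reciprocal lower-bound constants then produces the two-sided estimates.

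Since every ingredient is already in hand, I do not expect a genuine obstacle; the statement really is an immediate corollary. The only point requiring care is the exponent comparison $\beta/2-1>2\eta$ in the upper bound of part (i), which is precisely where the hypothesis $\beta>4H+2$ enters to guarantee that the initial-condition contribution does not overtake the noise-driven term $|t-s|^{2\eta}$ for small time increments.
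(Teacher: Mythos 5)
Your proposal is correct and is exactly the derivation the paper intends: the corollary is presented there without proof as ``an immediate consequence'' of Propositions \ref{Prop:Variogram}, \ref{Prop:SLND-t} and \ref{Prop:SLND-x}, and you supply the standard Gaussian fact $\mathbb{E}|X-Y|^{2}\geq \mathrm{Var}(X\mid Y)$ plus the exponent comparison $\beta/2-1>2H\geq 2\eta$ that makes the initial-condition term harmless, which is precisely where the hypothesis $\beta>4H+2$ (or $u_{0}\equiv 0$) is used. The only cosmetic omission is that the lower bounds from the two SLND propositions are stated for increments smaller than $\varepsilon$, so the two-sided estimate is really a local one (or requires a separate compactness argument for large increments), but this matches the implicit reading of the corollary.
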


\section{Exact Uniform Modulus of Continuity \label{Sec:ModulusContinuity}}

Now we are ready to prove (\ref{eq:Modulus-t}) and (\ref{eq:Modulus-x}) in Theorem 
\ref{Th:Main}. We start by stating a Kolmogorov's 0-1 law regarding the uniform moduli of
continuity for $u$. It is a consequence of Lemma 7.1.1 in Marcus and Rosen \cite%
{MarRos}.

\begin{lemma}
\label{Lem-s4-1} Let $\{u(t),t\in \mathbb{T}\}$ be a centered Gaussian
random process on $\mathbb{T}$, and $\varphi :{\mathbb{R}}_{+}\rightarrow {%
\mathbb{R}}_{+}$ be a function with $\varphi (0+)=0$. If
\begin{equation*}
\lim_{\varepsilon \rightarrow 0}
\sup_{0\leq s<t\leq T,t-s\leq \varepsilon }
\frac{|u(t)-u(s)|}{\varphi (t-s)}\leq K_{5,1},
\hbox{a.s.}
\end{equation*}%
for some constant $K_{5,1}<\infty$, then
\begin{equation*}
\lim_{\varepsilon \rightarrow 0}
\sup_{0\leq s<t\leq T,t-s\leq \varepsilon }
\frac{|u(t)-u(s)|}{\varphi (t-s)}=K_{5,1}^{\prime },
\hbox{a.s.\, for
some constant}\;\;K_{5,1}^{\prime }<\infty .
\end{equation*}
\end{lemma}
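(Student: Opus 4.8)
The plan is to isolate the random variable
\[
\xi := \lim_{\varepsilon \to 0}\,\sup_{0\le s<t\le T,\ t-s\le\varepsilon}\frac{|u(t)-u(s)|}{\varphi(t-s)}
\]
and to show that it is almost surely equal to a deterministic constant; the standing hypothesis then forces that constant, which we name $K_{5,1}'$, to satisfy $K_{5,1}'\le K_{5,1}<\infty$. First I would check that $\xi$ is a well-defined, measurable $[0,\infty]$-valued random variable. Since $u$ is separable (indeed sample-continuous in the regimes of interest), the inner supremum over the continuum of pairs $(s,t)$ agrees with the supremum over a fixed countable dense subset of $\{(s,t):0\le s<t\le T\}$, so each $\xi_\varepsilon:=\sup_{t-s\le\varepsilon}|u(t)-u(s)|/\varphi(t-s)$ is measurable and $\xi=\lim_{\varepsilon\downarrow0}\xi_\varepsilon$ is its monotone limit. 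The hypothesis gives $\xi\le K_{5,1}$ a.s.

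The core of the argument is a zero-one law showing $\xi$ is a.s. constant, which is exactly the content of Lemma 7.1.1 in \cite{MarRos}. The underlying mechanism is the ergodicity of a centered Gaussian law under translations by its Cameron--Martin (reproducing kernel) space $\mathcal H$, and the key step I would carry out is the shift-invariance $\xi(u+h)=\xi(u)$ a.s. for every $h\in\mathcal H$. Writing $d(s,t)=\big(\mathbb E|u(t)-u(s)|^2\big)^{1/2}$ for the canonical metric, the reproducing property gives $|h(t)-h(s)|=|\langle h,R(\cdot,t)-R(\cdot,s)\rangle_{\mathcal H}|\le\|h\|_{\mathcal H}\,d(s,t)$, whence
\[
\sup_{t-s\le\varepsilon}\frac{|h(t)-h(s)|}{\varphi(t-s)}\le\|h\|_{\mathcal H}\sup_{t-s\le\varepsilon}\frac{d(s,t)}{\varphi(t-s)}.
\]
As soon as the right-hand side tends to $0$ with $\varepsilon$, the triangle inequality yields $|\xi(u+h)-\xi(u)|\le\lim_{\varepsilon}\sup_{t-s\le\varepsilon}|h(t)-h(s)|/\varphi(t-s)=0$, which is the desired invariance. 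Granting this, for each real $c$ the event $\{\xi\le c\}$ is invariant modulo null sets under all Cameron--Martin translations; since $u$ and $u+h$ induce mutually equivalent laws, the Gaussian zero-one law gives $\mathbb P(\xi\le c)\in\{0,1\}$. Setting $K_{5,1}':=\inf\{c:\mathbb P(\xi\le c)=1\}$ then gives $\xi=K_{5,1}'$ a.s., with $K_{5,1}'\le K_{5,1}<\infty$ by hypothesis, which is the assertion.

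The main obstacle is the shift-invariance, which hinges on $\sup_{t-s\le\varepsilon}d(s,t)/\varphi(t-s)\to0$, i.e. on the canonical metric being dominated by the normalizer $\varphi$ (equivalently, Cameron--Martin functions being strictly smoother than the sample paths). For the lemma in full generality the cleanest route is to recognize $\xi$ as the functional treated in \cite[Lemma 7.1.1]{MarRos} and to invoke that result directly. In the applications to our solution the domination is in any case transparent: Proposition \ref{Prop:Variogram}, together with the matching lower bounds furnished by the strong local nondeterminism of Propositions \ref{Prop:SLND-t} and \ref{Prop:SLND-x}, gives $d(s,t)\approx|t-s|^{\eta}$ in the time variable and $d\approx(d_{\mathbb{S}^2}(x,y))^{\gamma}$ in the spatial variable, whereas the normalizers in \eqref{eq:Modulus-t} and \eqref{eq:Modulus-x} carry an extra factor $\sqrt{|\log(\cdot)|}$. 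Hence $d/\varphi$ decays like $|\log(\cdot)|^{-1/2}\to0$, confirming the hypotheses of the lemma precisely in the regimes where it is applied.
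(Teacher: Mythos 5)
Your proposal is correct and takes essentially the same route as the paper, whose entire proof of this lemma is the one-line citation of Lemma 7.1.1 in Marcus and Rosen; you invoke the same result while additionally unpacking the underlying mechanism (Cameron--Martin shift-invariance plus the Gaussian zero--one law). Your explicit caveat that the self-contained version of that mechanism needs $\sup_{t-s\le\varepsilon} d(s,t)/\varphi(t-s)\to 0$ --- which you correctly verify in the intended applications, where the variogram and strong local nondeterminism estimates give $d(s,t)\approx (t-s)^{\eta}$ against a normalizer carrying an extra $\sqrt{|\log(t-s)|}$ --- is a worthwhile point that the paper leaves implicit.
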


We remark that Lemma \ref{Lem-s4-1} does not exclude the possibility of $K_{5,1}^{\prime }=0$. 
One of the main difficulties in establishing an exact uniform modulus of continuity is to find conditions 
under which $K_{5,1}^{\prime }>0$.

\textbf{Proof of (\ref{eq:Modulus-t}) in Theorem \ref{Th:Main}.} The argument is similar to
that in the proof of Theorem 4.1 in \cite{MWX13}. Because
of Lemma \ref{Lem-s4-1}, we see that (\ref{eq:Modulus-t}) in Theorem \ref{Th:Main} will be 
proved after we establish upper and lower bounds of the following form: there exist positive 
and finite constants $K_{5,2}$ and $K_{5,3}$ such that
\begin{equation}
\lim_{\varepsilon \rightarrow 0}
\sup_{0\leq s<t\leq T,t-s\leq \varepsilon }
\frac{|u(t)-u(s)|}
{( t-s) ^{\eta}\sqrt{\big|\log \left( t-s\right) \big|}}
\leq K_{5,2},\ \
\hbox{ a.s. }
\label{Eq:Umod-upper}
\end{equation}%
and
\begin{equation}
\lim_{\varepsilon \rightarrow 0}
\sup_{0\leq s<t\leq T,t-s\leq \varepsilon }
\frac{|u(t)-u(s)|}
{( t-s) ^{\eta}\sqrt{\big|\log \left( t-s\right) \big|}}
\geq K_{5,3},\ \
\hbox{ a.s.}
 \label{Eq:Umod-lower}
\end{equation}%
These and Lemma \ref{Lem-s4-1} with $\varphi (r)=r^{\eta}\sqrt{|\log r|}$ imply
(\ref{eq:Modulus-t}) with $K_{1,1}\in \lbrack K_{5,3},K_{5,2}].$

We divide the rest of the proof into three parts.

\textit{Step 1: Proof of (\ref{Eq:Umod-upper}).} We introduce an auxiliary
Gaussian field:
\begin{equation*}
Y=\{Y(t,s):0\leq s<t\leq T,t-s\leq \varepsilon \}
\end{equation*}%
defined by $Y(t,s)=u(t)-u(s)$, where $0<\varepsilon \leq \delta $ so that
Lemma \ref{Prop:Variogram} holds. By the triangle inequality, we see that
the canonical metric $d_{Y}$ on $\Gamma :=\{(t,s)\in \mathbb{T\times }\mathbb{T}:\,
|t-s|\leq \delta \}$ associated with $Y$ satisfies the following inequality:
\begin{equation}
d_{Y}((t,s),(t^{\prime },s^{\prime }))
\leq \min
\{d_{\mathbb{T}}(t,t^{\prime})+d_{\mathbb{T}}(s,s^{\prime }),
d_{\mathbb{T}}(t,s)+d_{\mathbb{T}}(t^{\prime },s^{\prime })\},  \label{Eq:dY}
\end{equation}%
where $d_{\mathbb{T}}(t,s)=|t-s|^{\eta}.$ Denote the diameter of $\Gamma $ in the 
metric $d_{Y}$ by $D$.  Then, by Lemma \ref{Prop:Variogram}, we have
\begin{equation*}
D\leq \sup_{(s,t),(s^{\prime },t^{\prime })\in \Gamma }
(d_{T}(s,t)+d_{T}(s^{\prime },t^{\prime }))
\leq 2K_{3,1}\,\varepsilon ^{\eta}.
\end{equation*}%
For any $\tau >0$, let $N_{Y}(\Gamma ,\tau )$ be the smallest number of open $d_{Y}$-balls 
of radius $\tau $  needed to cover $\Gamma $. It follows from Lemma \ref{Prop:Variogram} that,
\begin{equation*}
N_{Y}(\Gamma ,\tau )\leq c_{5,1}\tau ^{-\frac{2}{\eta}},
\end{equation*}%
one can verify that
\begin{equation*}
\int_{0}^{D}\sqrt{\ln N_{Y}(T,\tau )}\,d\tau
\leq c_{5,2}\,\varepsilon ^{\eta}\sqrt{\ln (1+\varepsilon ^{-1})}.
\end{equation*}%
Hence, by Theorem 1.3.5 in \cite{AdlTay}, we have
\begin{equation*}
\limsup_{\varepsilon \rightarrow 0}
\sup_{0\leq s<t\leq T,t-s\leq \varepsilon }
\frac{|u(t)-u(s)|}{\varepsilon ^{\eta}\sqrt{|\log \varepsilon |}}
\leq c_{5,3},
\quad \hbox{a.s.},
\end{equation*}%
which implies (\ref{Eq:Umod-upper}). Here, $c_{5,1},$ $c_{5,2}$ and $c_{5,3}$
are positive constants depending only on $c_{0}$, $\alpha $ and $H$.

\textit{Step 2: Proof of (}\ref{Eq:Umod-lower}\textit{).} For any $n\geq\lfloor |\log _{2}\delta |\rfloor +1$, 
where $\delta $ is the constant same
as in Proposition \ref{Prop:Variogram}, we chose a sequence of $2^{n}$
points $\{t_{n,i},1\leq i\leq 2^{n}\}$ on ${\mathbb{T}}$ that are equally
separated in the following sense: For every $2\leq k\leq 2^{n}$, we have
\begin{equation}
t_{n,k}-t_{n,k-1}=2^{-n}.
\label{Eq: Tpoints}
\end{equation}%
Notice that
\begin{equation}
\begin{aligned}
&\lim_{\varepsilon \rightarrow 0}
\sup_{0\leq s<t\leq T,t-s\leq \varepsilon }
\frac{|u(t)-u(s)|}{(t-s)^{\eta}\sqrt{|\log (t-s)|}}
\\
&\geq \underset{n\rightarrow \infty }{\,\lim \inf }
\max_{2\leq k\leq 2^{n}}
\frac{|u(t_{n,k})-u(t_{n,k-1})|}{2^{-n\eta}\sqrt{n}}.
\end{aligned}
\label{Eq:Umod-lower2}
\end{equation}%
It is sufficient to prove that, almost surely, the last limit in (\ref%
{Eq:Umod-lower2}) is bounded below by a positive constant. This is done by
applying the property of strong local nondeterminism in Proposition
 \ref{Prop:SLND-t} and a standard Borel-Cantelli argument.

Let $\tau >0$ be a constant whose value will be chosen later. We consider
the events
\begin{equation*}
A_{m}=\bigg\{
\max_{2\leq k\leq m}\big |u(t_{n,k})-u(t_{n,k-1})\big |
\leq \tau 2^{-nH}\sqrt{n}
\bigg\}
\end{equation*}
for $m=2,\ldots ,2^{n}$. By conditioning on $A_{2^{n}-1}$ first, we can
write
\begin{equation}
\begin{split}
{\mathbb{P}}\big(A_{2^{n}}\big)
& ={\mathbb{P}}\bigg\{\big |u(t_{n,2^{n}})-u(t_{n,2^{n}-1},)%
\big| \leq \tau 2^{-nH}\sqrt{n}\big |A_{2^{n}-1}\bigg\}
\\
&\quad \times {\mathbb{P}}\big(A_{2^{n}-1}\big).
\end{split}
\label{Eq:inter-1}
\end{equation}

Recall that, given the random variables in $A_{2^{n}-1}$, the conditional
distribution of the Gaussian random variable $%
u(t_{n,2^{n}},)-u(t_{n,2^{n}-1})$ is still Gaussian, with the corresponding
conditional mean and variance as its mean and variance. By Proposition \ref{Prop:SLND-t}, 
the aforementioned conditional variance for $k=2,...,2^{n}$
satisfies
\begin{equation*}
\mathrm{Var}\big(u(t_{n,k})-u(t_{n,k-1})\big|A_{k-1}\big)
\geq K_{4,4}2^{-2nH}.
\end{equation*}%
This and Anderson's inequality (see \cite{A55}) imply
\begin{equation}
\begin{split}
& {\mathbb{P}}\bigg\{\big |u(t_{n,k},x_{n,k})-u(t_{n,k-1},x_{n,k-1})\big |
\leq \tau 2^{-n\eta}\sqrt{n}\big |\,A_{k-1}\bigg\} \\
& \leq {\mathbb{P}}\Big\{N(0,1)\leq \frac{2^{-n\eta} \tau \sqrt{n}}{%
K_{4,4}^{1/2}2^{-n\eta}}\Big\} \\
& \leq 1-\frac{K_{4,4}^{1/2}2^{-n\eta}}{\tau 2^{-n\eta}\sqrt{n}}
\exp \Big(-\frac{2^{-2n\eta}C ^{2}n}{K_{4,4}2^{1-2n\eta}}\Big) \\
& \leq \exp \bigg(
-\frac{K_{4,4}^{1/2}}{\tau \sqrt{n}}
\exp \Big(-\frac{\tau^{2}n}{2K_{4,4}}
\Big)\bigg).
\end{split}
\label{Eq:inter-2}
\end{equation}%
In deriving the last two inequalities, we have applied Mill's ratio and the
elementary inequality $1-x\leq e^{-x}$ for $x>0$. Iterating this procedure
in (\ref{Eq:inter-1}) and (\ref{Eq:inter-2}) for $2^{n}-1$ more times, we
obtain%
\begin{equation*}
\begin{split}
{\mathbb{P}}\big(A_{2^{n}}\big)&
\leq \exp \bigg(-\sum_{k=1}^{2^{n}}
\frac{K_{4,4}^{1/2}}{\tau \sqrt{n}}
\exp \Big(-\frac{\tau ^{2}n}{2K_{4,4}}\Big)
\bigg) \\
& \leq \exp \left\{ -\frac{K_{4,4}^{1/2}}{2\tau \sqrt{n}}
\left( \frac{2}{e^{\tau ^{2}/(2K_{4,4})}}\right) ^{n}
\right\} ,
\end{split}%
\end{equation*}%
which yields that
$\sum_{n=1}^{\infty }{\mathbb{P}}\big(A_{2^{n}}\big)<\infty $. Hence the Borel-Cantelli lemma implies that almost surely,
\begin{equation*}
\max_{2\leq k\leq 2^{n}}\big |u(t_{n,k})-u(t_{n,k-1})\big |
\geq \tau 2^{-n\eta}\sqrt{n}
\end{equation*}%
for all $n$ large enough. This implies that the right-hand side of (\ref{Eq:Umod-lower2}) is bounded from below almost surely by some $C >0$.
Hence (\ref{Eq:Umod-lower}) follows from this and Lemma \ref{Lem-s4-1}. This
finishes the proof of (\ref{eq:Modulus-t}) in Theorem \ref{Th:Main}. $\blacksquare $

\textbf{Proof of (\ref{eq:Modulus-x}) in Theorem \ref{Th:Main}. } The proof
is similar to the argument above (see also proof of Theorem 2 in \cite{LanMarXiao}), and we omit it here. $\blacksquare $

As an ending of this section, we further study the regularity properties of higher-order
derivatives of $u$ $w.r.t.$ the spatial variable $x\in \mathbb{S}^{2}$ based on pseudo-differential operators, 
defined as follows: for a real $k\in \mathbb{R}$, 
\begin{equation*}
\nabla^{(k)}u(t):=(1-\Delta _{{\mathbb{S}}^{2}})^{k/2}u(t)=\sum_{\ell m}u_{\ell
m}(t)(1+\ell (\ell +1))^{k/2}Y_{\ell m}
\end{equation*}%
provided the right-hand side converges in $L^{2}(\Omega \times {\mathbb{S}}^{2})$.

It is shown in \cite[Chapter XI]{Taylor} that the Sobolev space $\mathbb{H}^{k}({\mathbb{S}}^{2})$ 
of functions with square-integrable $k$-th weak
derivatives can be viewed as the image of $L^{2}({\mathbb{S}}^{2})$ under
the operator $(1-\Delta _{{\mathbb{S}}^{2}})^{-k/2}$; this and related
property are exploited by Lang et al. \cite{LangSchAAP} and Lan et al. \cite{LanMarXiao} to prove their
results on regularity of higher-order derivatives.  

Again, with a slight abuse of notation, we write $u\left( x\right) =u\left( t, x\right) $ for brevity and recall the estimation 
of the angular power spectrum (\ref{APS-u}) of $u$. An immediate consequence of Theorem 3 
in \cite{LanMarXiao} is as follows, which derives the exact uniform modulus of continuity for $\nabla^{(k)}u(x)$.
\begin{corollary} \label{Cor:derivative-x}
Assume $1/2<H<1$ and Conditions \textbf{(A.1)} and \textbf{(A.2)} hold. If $u_{0}\equiv 0$ and $2k < \alpha+4H< 2+2k$ 
for some integer $k \ge 2$, then $\nabla^{(k-1)}u$ satisfies the following exact uniform modulus of continuity:
\begin{equation*}
\lim_{\varepsilon \rightarrow 0}\sup_{\substack{ x,y\in {\mathbb{S}}^{2}  \\ %
d_{{\mathbb{S}}^{2}}(x,y)\le \varepsilon }}\frac{|\nabla^{(k-1)}u(x)-\nabla^{(k-1)}u(y)|}{%
\left( d_{{\mathbb{S}}^{2}}(x,y)\right)^{\alpha/2+2H-k } \sqrt{\big|\log d_{{%
\mathbb{S}}^{2}}(x,y) \big|}}=K_{5,4}, \quad \hbox{a.s.}
\end{equation*}
\end{corollary}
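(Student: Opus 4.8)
The plan is to deduce the statement directly from Theorem 3 in \cite{LanMarXiao}, which supplies the exact uniform modulus of continuity for a 2-weakly isotropic Gaussian field on $\mathbb{S}^2$ whose angular power spectrum decays polynomially in a suitable range. The only genuine task is therefore to identify $\nabla^{(k-1)}u(t,\cdot)$ as such a field and to compute its angular power spectrum.

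First I would note that, since $u(t,\cdot)$ is mean-zero and 2-weakly isotropic (established in Section \ref{Sec:Existence}) and the pseudo-differential operator $(1-\Delta_{\mathbb{S}^2})^{(k-1)/2}$ acts diagonally on the spherical harmonics, the field
\begin{equation*}
\nabla^{(k-1)}u(t,x)=\sum_{\ell m}u_{\ell m}(t)\,(1+\ell(\ell+1))^{(k-1)/2}Y_{\ell m}(x)
\end{equation*}
is again a mean-zero, 2-weakly isotropic Gaussian field, with the $(\ell,m)$-coefficient having variance $(1+\ell(\ell+1))^{k-1}\mathbb{E}|u_{\ell m}(t)|^2$. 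By the two-sided bound (\ref{APS-u}) on the angular power spectrum of $u(t,\cdot)$ and the elementary relation $1+\ell(\ell+1)\approx(\ell+1/2)^2$, the angular power spectrum $\widetilde{D}_\ell:=\mathbb{E}|(\nabla^{(k-1)}u)_{\ell m}|^2$ of $\nabla^{(k-1)}u(t,\cdot)$ satisfies
\begin{equation*}
\widetilde{D}_\ell\approx (\ell+1/2)^{-\beta'},\qquad \beta':=\alpha+4H-2(k-1).
\end{equation*}
In particular $\sum_{\ell}(2\ell+1)\widetilde{D}_\ell<\infty$ precisely when $\beta'>2$, i.e. when $\alpha+4H>2k$, so under the hypothesis $\nabla^{(k-1)}u(t,\cdot)$ is a genuine element of $L^2(\Omega\times\mathbb{S}^2)$ rather than merely a distribution, and the series above converges in the appropriate sense.

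Next I would check that $\beta'$ lies in the range required by the cited results. The assumption $2k<\alpha+4H<2+2k$ is equivalent to $2<\beta'<4$, which is exactly the regime in which Theorem 1 of \cite{LanMarXiao} yields strong local nondeterminism for $\nabla^{(k-1)}u(t,\cdot)$ (needed for the lower bound, as in the proof of Proposition \ref{Prop:SLND-x}) and Theorem 3 of \cite{LanMarXiao} yields the exact uniform modulus of continuity with H\"older exponent $(\beta'-2)/2$ and the $\sqrt{|\log d_{\mathbb{S}^2}(x,y)|}$ correction. Since
\begin{equation*}
\frac{\beta'-2}{2}=\frac{\alpha+4H-2k}{2}=\frac{\alpha}{2}+2H-k,
\end{equation*}
applying that theorem to $\nabla^{(k-1)}u(t,\cdot)$ produces precisely the stated limit, with $K_{5,4}>0$ a finite constant.

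The main obstacle is not conceptual but bookkeeping: one must verify carefully that the commensurability in (\ref{APS-u}) survives multiplication by $(1+\ell(\ell+1))^{k-1}$ with the correct index shift $\beta'=\alpha+4H-2(k-1)$, and that the resulting exponent lands in the open interval $(2,4)$ so that all hypotheses of Theorems 1 and 3 in \cite{LanMarXiao} (isotropy, the polynomial two-sided spectral bound, and the admissible spectral range) are simultaneously met. Once this identification is in place the corollary is immediate, which is why we record it without a full proof.
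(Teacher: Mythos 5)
Your proposal is correct and follows essentially the same route as the paper: the paper likewise obtains the corollary by combining the two-sided angular power spectrum bound (\ref{APS-u}) with the diagonal action of $(1-\Delta_{{\mathbb{S}}^{2}})^{(k-1)/2}$ on spherical harmonics and then invoking Theorem 3 of \cite{LanMarXiao}, which is why it records the result without further argument. Your bookkeeping of the shifted spectral index $\beta'=\alpha+4H-2(k-1)\in(2,4)$ and the resulting exponent $\alpha/2+2H-k$ is exactly the intended verification.
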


%\textbf{Acknowledgement.}
%Research of Y. Xiao is partially supported by grants from the National Science Foundation.

\bigskip


\begin{thebibliography}{99}
{\small
\bibitem{AdlTay} Adler, R. J. and Taylor, J. E. (2007), \emph{Random Fields
and Geometry.} Springer, New York.

\bibitem{AMN} Al\'os, E., Mazet, O. and Nualart, D. (1999), Stochastic
calculus with respect to Gaussian processes. \textit{Ann. Probab.} 29,
766--801.

\bibitem{A55} Anderson, T. W. (1955), The integral of a symmetric unimodal
function over a symmetric convex set and some probability inequalities. 
\textit{Proc. Amer. Math. Soc.} 6, 170--176.

\bibitem{BalTud08} Balan, R. M. and Tudor, C. A. (2010), The stochastic heat
equation with fractional-colored noise: Existence of the solution. \textit{J. Theor. Probab.}  
23, 834--870.

%\bibitem{DalangL04} Dalang, R. C. and L\'ev\^eque, O. (2004), Second-order
%linear hyperbolic SPDEs driven by isotropic Gaussian noise on a sphere. 
%\textit{Ann. Probab.} 32, 1068--1099.

\bibitem{DKMNX} Dalang, R., Khoshnevisan, D., Mueller, C., Nualart, D. and
Xiao, Y. (2009), \textit{A Minicourse on Stochastic Partial Differential
Equations.} (D. Khoshnevisan and F. Rassoul-Agha, editors). Lecture Notes in
Math., 1962. Springer-Verlag, Berlin.

\bibitem{DaPraZab} Da Prato, G. and Zabczyk, J. (1992), \textit{Stochastic Equations 
in Infinite Dimensions.} Cambridge Univ. Press,
Cambridge.

\bibitem{GM11} Gawarecki, L. and Mandrekar, V. (2011), \textit{Stochastic
Differential Equations in Infinite Dimensions with Applications to
Stochastic Partial Differential Equations.} Springer, Heidelberg.

\bibitem{GilTrud} Gilbarg, D. and Trudinger, N.S. (1998), \textit{Elliptic
Partial Differential Equations of Second Order}. Springer. 

\bibitem{GradRyzh} Gradshteyn, I. S. and Ryzhik, I. M. (1980), \textit{Tables of Integrals, 
Series, and Products.} (4th ed.), Academic Press, New
York.

\bibitem{Saddle-point} Johnson, S. G. (2015), Saddle-point integration of $C^{\infty }
$ \textquotedblleft bump\textquotedblright\ functions.
arXiv:1508.04376v1.

\bibitem{Kh16} Khoshnevisan, D. (2016), Invariance and comparison principles
for parabolic stochastic partial differential equations. In: \textit{From L\'{e}vy-type 
Processes to Parabolic SPDEs}, pp. 127-216, Birkh\"{a}user/Springer, Cham.

\bibitem{Kh14} Khoshnevisan, D. (2014), \textit{Analysis of Stochastic
Partial Differential Equations.} CBMS Regional Conference Series in
Mathematics, 119. The American Mathematical Society, Providence, RI, 2014.

\bibitem{LanMarXiao} Lan, X., Marinucci, D. and Xiao, Y. (2018), Strong
local nondeterminism and exact modulus of continuity for spherical Gaussian
fields. \textit{Stoch. Process. Appl.} 128, 1294--1315.

\bibitem{LangLarSch} Lang, A., Larsson, S. and Schwab, C. (2013), Covariance
structure of parabolic stochastic partial differential equations.\textit{Stoch. PDE: Anal. 
Comp.} 1, 351--364.

\bibitem{LangSchAAP} Lang, A. and Schwab, C. (2015), Isotropic Gaussian
random fields on the sphere: regularity, fast simulation and stochastic
partial differential equations. \textit{Ann. Appl. Probab.} 25, 3047--3094.

\bibitem{MarRos} Marcus, M. B. and Rosen J. (2006), \textit{Markov
Processes, Gaussian Processes, and Local Times.} Cambridge University Press,
Cambridge.

\bibitem{MPbook} 
Marinucci, D. and Peccati, G. (2011), \textit{Random Fields
on the Sphere. Representation, Limit Theorem and Cosmological Applications.}
Cambridge University Press, Cambridge.

\bibitem{MWX13}
Meerschaert, M. M., Wang, W. and Xiao, Y. (2013), Fernique-type inequalities and
moduli of continuity for anisotropic Gaussian random fields. \textit{Trans. Amer. Math. Soc.}
365, 1081--1107.

\bibitem{NuaV09} Nualart, E. and Viens, F. (2009), The fractional stochastic
heat equation on the circle: time regularity and potential theory. \textit{Stoch. 
Process. Appl. } 119, 1505--1540.

\bibitem{PrevRoc} Pr\'{e}v\^{o}t, C. and R\"{o}ckner, M. (2007), \textit{A
Concise Course on Stochastic Partial Differential Equations.} Lecture Notes
in Math., 1905, Springer, Berlin.

\bibitem{szego} Szego, G. (1975), \textit{Orthogonal Polynomials.}\emph{\ }
American Mathematical Society Colloquium Publications, 4th Edition, Volume
XXIII, Providence, RI.

\bibitem{Taylor} Taylor, M. E. (1981), \textit{Pseudodifferential Operators.} Princeton University Press, Princeton, NJ.

\bibitem{TinTudVien03} Tindel, S., Tudor, C. A. and Viens, F. (2003),
Stochastic evolution equations with fractional Brownian motion. \textit{Probab. 
Th. Relat. Fields} 127, 186--204.

\bibitem{TinTudVien04} Tindel, S., Tudor, C. A. and Viens, F. (2004), Sharp
Gaussian regularity on the circle, and applications to the fractional
stochastic heat equation. \textit{J. Funct. Anal.} 217, 280--313.

\bibitem{Walsh} Walsh, J. B. (1986), An introduction to stochastic partial
differential equations. \textit{\'Ecole d'\'et\'e de probabilit\'es de
Saint-Flour, XIV---1984,} 265--439, Lecture Notes in Math., 1180, Springer,
Berlin.

\bibitem{XiaoTudor08} Tudor, C. A. and Xiao, Y. (2017), Sample paths of the
solution to the fractional-colored stochastic heat equation. \textit{Stoch.
Dyn.} 17, no. 1, 1750004 (20 pages).

\bibitem{Xiao(SLND)} Xiao, Y. (2007), Strong local nondeterminism and sample
path properties of Gaussian random fields. In \textit{Asymptotic Theory in
Probability and Statistics with Applications,} eds. T. Lai, Q. Shao and L.
Qian (Higher Education Press), pp. 136--176.
}
\end{thebibliography}
\end{document}